\documentclass[11pt]{article}
\usepackage[latin9]{inputenc}
\usepackage{geometry}
\geometry{verbose,tmargin=2cm,bmargin=2cm,lmargin=2.5cm,rmargin=2.5cm}
\usepackage{color}
\usepackage{amsmath}
\usepackage{amsthm}
\usepackage{amssymb}
\usepackage{graphicx}
\usepackage[all]{xy}
\usepackage[unicode=true,pdfusetitle,
 bookmarks=true,bookmarksnumbered=false,bookmarksopen=false,
 breaklinks=false,pdfborder={0 0 0},pdfborderstyle={},backref=false,colorlinks=true]
 {hyperref}
\hypersetup{
 pdfborderstyle={},pdfborderstyle={},pdfborderstyle={},linkcolor=blue,citecolor=blue}

\makeatletter
\theoremstyle{plain}
\newtheorem{thm}{\protect\theoremname}
\theoremstyle{remark}
\newtheorem{rem}[thm]{\protect\remarkname}
\theoremstyle{plain}
\newtheorem{prop}[thm]{\protect\propositionname}
\theoremstyle{plain}
\newtheorem{cor}[thm]{\protect\corollaryname}
\theoremstyle{plain}
\newtheorem{conjecture}[thm]{\protect\conjecturename}
\theoremstyle{definition}
\newtheorem{defn}[thm]{\protect\definitionname}
\theoremstyle{plain}
\newtheorem{lem}[thm]{\protect\lemmaname}





\usepackage{amsfonts}

\newtheorem{theorem}{Theorem}[section]

\theoremstyle{definition}
\newtheorem{definition-and-remark}[theorem]{Definition and Remark}\newtheorem{notation}[theorem]{Notation}\newtheorem{notation-and-remark}[theorem]{Notation and Remark}\newtheorem{remark-and-notation}[theorem]{Remark and Notation}\newtheorem{remark-and-definition}[theorem]{Remark and Definition}

\newcommand{\cA}{ {\cal A} }
\newcommand{\bC}{ {\mathbb C} }

\newcommand{\cH}{ {\cal H} }

\newcommand{\fM}{ {\mathfrak M} }
\newcommand{\bN}{ {\mathbb N} }

\newcommand{\cS}{{\cal S}}

\newcommand{\fT}{ {\mathfrak T} }

\newcommand{\bigO}{{\cal O}}

\newcommand{\Cat}{ \mathrm{Cat} }

\newcommand{\Int}{ \mathrm{Int} }
\newcommand{\intc}{\widetilde{\Int}}
\newcommand{\Kr}{ \mathrm{Kr} }

\newcommand{\NC}{ \nc }

\newcommand{\mst}{\overline{\fM_1}}
\newcommand{\msst}{\overline{\fM}}
\newcommand{\forget}{\mathsf{forget}}

\newcommand{\dH}{ d_{ { }_H } }

\newcommand{\nc}{\mathrm{NC}}
\newcommand{\ncp}{\mathrm{NCP}}

\newcommand{\veetild}{\widetilde{\vee}}

\newcommand{\limn}{ \lim_{n \to \infty} }

\newcommand{\ecpi}{ \stackrel{\pi}{\sim} }
\newcommand{\ecrho}{ \stackrel{\rho}{\sim} }


\newcommand{\la}{\lambda}

\newcommand{\bw}{\mathbf{w}}
\newcommand{\bz}{\mathbf{z}}
\newcommand{\bi}{\mathbf{i}}
\newcommand{\bj}{\mathbf{j}}


\providecommand{\corollaryname}{Corollary}
\providecommand{\theoremname}{Theorem}

\reversemarginpar
\numberwithin{thm}{section}
\numberwithin{equation}{section}

\usepackage[perpage,symbol]{footmisc}
\usepackage[leftcaption]{sidecap}

\sidecaptionvpos{figure}{c}

\newcommand{\FigBesBeg}[1][1.0]{%
 \let\MyFigure\figure
 \let\MyEndfigure\endfigure
 \renewenvironment{figure}[1]{\begin{SCfigure}[#1]##1}{\end{SCfigure}}}

\newcommand{\FigBesEnd}{%
 \let\figure\MyFigure
 \let\endfigure\MyEndfigure}

  \providecommand{\conjecturename}{Conjecture}
  \providecommand{\corollaryname}{Corollary}
  \providecommand{\definitionname}{Definition}
  \providecommand{\lemmaname}{Lemma}
  \providecommand{\propositionname}{Proposition}
  \providecommand{\remarkname}{Remark}
\providecommand{\theoremname}{Theorem}

  \providecommand{\conjecturename}{Conjecture}
  \providecommand{\corollaryname}{Corollary}
  \providecommand{\definitionname}{Definition}
  \providecommand{\lemmaname}{Lemma}
  \providecommand{\propositionname}{Proposition}
  \providecommand{\remarkname}{Remark}
\providecommand{\theoremname}{Theorem}

\makeatother

\providecommand{\conjecturename}{Conjecture}
\providecommand{\corollaryname}{Corollary}
\providecommand{\definitionname}{Definition}
\providecommand{\lemmaname}{Lemma}
\providecommand{\propositionname}{Proposition}
\providecommand{\remarkname}{Remark}
\providecommand{\theoremname}{Theorem}

\begin{document}
\title{\textbf{\Large{}Asymptotics for a Class of Meandric Systems,}\\
\textbf{\Large{}via the Hasse Diagram of NC(n)}}
\author{I. P. Goulden\thanks{Research supported by a Discovery Grant from NSERC, Canada},~~
Alexandru Nica\thanks{Research supported by a Discovery Grant from NSERC, Canada},~~
Doron Puder\thanks{Research supported by the Israeli Science Foundation (grant No.~1071/16)}}
\maketitle
\begin{abstract}
We consider the framework of closed meandric systems, and its equivalent
description in terms of the Hasse diagrams of the lattices of non-crossing
partitions $\NC(n)$. In this equivalent description, considerations
on the number of components of a random meandric system of order $n$
translate into considerations about the distance between two random
partitions in $\NC(n)$. We put into evidence a class of couples $(\pi,\rho)\in\nc(n)^{2}$
-- namely the ones where $\pi$ is conditioned to be an interval
partition -- for which it turns out to be tractable to study distances
in the Hasse diagram. As a consequence, we observe a non-trivial class
of meanders (i.e.~connected meandric systems), which we call ``meanders
with shallow top'', and which can be explicitly enumerated. Moreover,
denoting by $c_{n}$ the expected number of components for the corresponding
notion of ``meandric system with shallow top'' of order $n$, we
find the precise asymptotic $c_{n}\approx\frac{n}{3}+\frac{28}{27}$
for $n\to\infty$. Our calculations concerning expected number of
components are related to the idea of taking the derivative at $t=1$
in a semigroup for the operation $\boxplus$ of free probability (but
the underlying considerations are presented in a self-contained way,
and can be followed without assuming a free probability background).

Let $c_{n}'$ denote the expected number of components of a general,
unconditioned, meandric system of order $n$. A variation of the methods
used in the shallow-top case allows us to prove that $\mathrm{lim\ inf}_{n\to\infty}c_{n}'/n\geq0.17$.
We also note that, by a direct elementary argument, one has $\mathrm{lim\ sup}_{n\to\infty}c_{n}'/n\leq0.5$.
These bounds support the conjecture that $c_{n}'$ follows a regime
of ``constant times $n$'' (where numerical experiments suggest
that the constant should be $\approx0.23$). 
\end{abstract}
\tableofcontents{}

\section{{\large{}Introduction\label{sec:Introduction}}}

\subsubsection*{Framework of the paper}

In this paper we consider the framework of closed meandric systems,
and its equivalent description in terms of Hasse diagrams (that is,
graphs of covers) of the lattices of non-crossing partitions $\NC(n)$.
In this equivalent description, considerations on the number of components
of a random meandric system of order $n$ translate into considerations
about the distance between two random partitions $\pi,\rho\in\NC(n)$,
where $\pi,\rho$ are viewed as vertices in the Hasse diagram. The
relevant definitions and statements of facts concerning these frameworks
will be reviewed precisely in Sections \ref{sec:Background-on-NC(n)}
and \ref{sec:Meandric-systems} below. Here we only record the following
important point: denoting the set of closed meandric systems of order
$n$ by \marginpar{$\fM\left(n\right)$}$\fM(n)$, one has a natural
bijection $\NC(n)^{2}\ni(\pi,\rho)\mapsto M(\pi,\rho)\in\fM(n)$,
with the property that 
\begin{equation}
\left(\begin{array}{c}
\mbox{number of components}\\
\mbox{of \ensuremath{M(\pi,\rho)}}
\end{array}\right)\ =\ n-\dH(\pi,\rho),\ \ \forall\,\pi,\rho\in\nc\left(n\right),\label{eq:M=00003Dn-d_H}
\end{equation}
where $\dH$ is the distance function in the Hasse diagram of $\nc\left(n\right)$.
The picture of the meandric system $M(\pi,\rho)$ is obtained by ``doubling''
the partitions $\pi$ and $\rho$, and by drawing the resulting doublings
above and respectively below a horizontal line with $2n$ points marked
on it. (See Section \ref{sec:Meandric-systems} for a precise description
of this.)

When looking at meandric systems, two appealing questions that immediately
arise are: 
\begin{itemize}
\item What is the average number of components of a random meandric system
in $\fM(n)$? 
\item What is the probability that a random meandric system in $\fM(n)$
is connected\linebreak{}
 (that is, it has only one component)? 
\end{itemize}
\noindent The latter question is a rather celebrated one: a connected
meandric system goes under the short name of \emph{meander}, and since
the cardinality of $\fM(n)$ is easily calculated as the square Catalan
number $\Cat_{n}^{2}$, this question actually asks what is the number
of (closed) meanders of order $n$. This is known to be a hard problem,
which is open even at the level of finding the asymptotic growth rate
as $n\to\infty$ (see \cite{di2000folding,lando2004graphs}). The
question about average number of components seems to have received
less attention in the literature, but is very natural; we suspect
it to be rather hard as well. As explained below, there are reasons
to believe that the average mentioned in this question must behave
like $c\cdot n$ for a constant $c\in(0.17,0.50)$ (which, if exists,
is yet to be determined).

When converted to the framework of the Hasse diagram of $\NC(n)$,
the two questions stated above become: 
\begin{itemize}
\item What is the average distance between two random partitions in $\NC(n)$? 
\item What is the probability that two random partitions $\pi,\rho\in\NC(n)$
have $\dH(\pi,\rho)=n-1$\linebreak{}
 (i.e.~that $\pi$ and $\rho$ ``achieve a diameter'' in the Hasse
diagram of $\NC(n)$)? 
\end{itemize}
\noindent Since the conversion from meandric systems to the Hasse
diagrams of $\NC(n)$ is rather straightforward, the new questions
are about as tractable (or intractable) as the original ones.

\subsubsection*{Meandric systems with shallow top}

In this paper we mainly study distances between $\pi,\rho\in NC(n)$
where $\pi$ is conditioned to be an \emph{interval} partition --
that is, every block of $\pi$ is of the form $\{i\in\bN\mid p\leq i\leq q\}$
for some $p\leq q$ in $\{1,\ldots,n\}$. We denote the set of interval
partitions of order $n$ by $\mathrm{Int}\left(n\right)$\marginpar{$\mathrm{Int}\left(n\right)$}.
As motivation for why one might look at interval partitions, we can
invoke the state of mind which comes from using partitions in the
combinatorial development of non-commutative probability. When looking
for non-commutative analogs for classical probability theory, a case
can be made (cf. \cite{speicher1997}) that there are two main analogs
which can be pursued: free probability, whose combinatorics is based
on the lattices $\NC(n)$, and the lighter theory of Boolean probability,
whose combinatorics is based on the lattices $\Int(n)$ of interval
partitions. There is no doubt that meandric systems have connections
to free probability, and the specialization ``$\pi\in\Int(n)$''
can be construed as ``going Boolean in the first variable''. The
cost of this specialization is that the cardinality of $\Int(n)$
is roughly only the square root of that of $\NC(n)$: $|\Int(n)|=2^{n-1}$,
while $|\NC(n)|=\Cat_{n}$, with magnitude of roughly $4^{n}$. But
on the bright side, one has the precise results stated in Theorems
\ref{thm:main-shallow meanders} and \ref{thm:avg distance from interval partition}
below.

We denote by $\msst\left(n\right)$ the set of meandric systems of
order $n$ corresponding to $\left(\pi,\rho\right)\in\Int\left(n\right)\times\nc\left(n\right)$.
In the statement of Theorem \ref{thm:main-shallow meanders}, the
``1'' in the notation $\mst\left(n\right)$ marks the fact that
a meandric system corresponding to a couple $(\pi,\rho)\in\mst\left(n\right)$
has $1$ component. We are also using the notation $\left|\pi\right|$\marginpar{$\left|\pi\right|$}
for the number of blocks in a partition $\pi\in NC(n)$. 
\begin{thm}
\label{thm:main-shallow meanders} For every $n\in\bN$, denote\marginpar{$\mst\left(n\right)$}
\begin{equation}
\mst\left(n\right)=\{(\pi,\rho)\in\Int(n)\times\NC(n)\mid\dH(\pi,\rho)=n-1\}.\label{eqn:11a}
\end{equation}
Then one has 
\begin{equation}
\left|\mst\left(n\right)\right|=\sum_{m=1}^{n}\frac{1}{n}\left(\begin{array}{c}
n\\
m-1
\end{array}\right)\,\left(\begin{array}{c}
n+m-1\\
n-m
\end{array}\right).\label{eqn:meanders with shallow top}
\end{equation}
Moreover, in the latter sum, the term indexed by every $m\in\{1,\ldots,n\}$
gives precisely the number of couples $(\pi,\rho)$ counted on the
right-hand side of (\ref{eqn:11a}) and where $\left|\pi\right|=m$. 
\end{thm}

\begin{rem}
\label{rem:exp growth rate of shallow meanders} 
\begin{enumerate}
\item The statement of Theorem \ref{thm:main-shallow meanders} follows
from a more general formula, given in Proposition \ref{prop:N(m,n,j)}
below, which in turn is a consequence of a fairly subtle tree bijection
found in Section 6 of the paper. Some other specializations of Proposition
\ref{prop:N(m,n,j)} are discussed at the end of this Introduction
(see Proposition \ref{prop:16} and the comment that follows it).
\item Let us say that $\rho\in\nc\left(n\right)$ is a ``meandric partner''
of $\pi\in\nc\left(n\right)$ if $M\left(\pi,\rho\right)$ is a meander,
or, equivalently, if $d_{H}\left(\pi,\rho\right)=n-1$. The exponential
growth rate of the meanders counted in Theorem \ref{thm:main-shallow meanders}
is roughly $5.22$, namely, $\overline{\lim}_{n\to\infty}\sqrt[n]{\left|\mst\left(n\right)\right|}\approx5.22$
(see Corollary \ref{cor:EGR of meanders with shallow top}). Since
there are $2^{n-1}$ interval partitions, this means that the average
number of meandric partners of an interval partition is roughly $2.61^{n}$.
This is less than the average for all non-crossing partitions, which
has exponential growth rate of at least 2.845 \cite[Theorem 1.1]{albert2005bounds}
and conjectured to be roughly $3.066$ \cite{jensen2000critical}.
In Conjecture \ref{conj:rainbow partitions} below we make a conjecture
as for the type of non-crossing partitions with maximal number of
meandric partners. 
\end{enumerate}
\end{rem}

\begin{thm}
\label{thm:avg distance from interval partition} For every $n\in\bN$,
consider the average distance\marginpar{$d_{n}$} 
\begin{equation}
d_{n}=\frac{1}{2^{n-1}\cdot\Cat_{n}}\sum_{\begin{array}{c}
{\scriptstyle \pi\in\Int(n),}\\
{\scriptstyle \rho\in NC(n)}
\end{array}}\ \dH(\pi,\rho).\label{eqn:12a}
\end{equation}
Then one has 
\begin{equation}
\limn\bigl(d_{n}-\frac{2}{3}n\bigr)=-28/27.\label{eqn:12b}
\end{equation}
\end{thm}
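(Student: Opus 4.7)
The plan is first to reduce the statement to one about the expected number of components of a random shallow-top meandric system. By the identity (\ref{eq:M=00003D00003Dn-d_H}), the equality $d_{H}(\pi,\rho) = n - \#\{\text{components of }M(\pi,\rho)\}$ holds for every $(\pi,\rho) \in \NC(n)^{2}$. Averaging over $(\pi,\rho) \in \Int(n) \times \NC(n)$ and setting
\[
c_n := \frac{1}{2^{n-1}\cdot\Cat_n}\sum_{\pi\in\Int(n),\,\rho\in\NC(n)} \#\{\text{components of }M(\pi,\rho)\},
\]
one obtains $d_n = n - c_n$. The desired asymptotic (\ref{eqn:12b}) then becomes $c_n - n/3 \to 28/27$, which is the form already announced in the abstract.

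The next step is to produce a closed expression for $c_n$ by means of the refined enumeration Proposition \ref{prop:N(m,n,j)}, of which Theorem \ref{thm:main-=00003D000023shallow meanders} is the $j=1$ specialization. I would invoke that proposition as a black box (since it is established earlier via the tree bijection of Section 6) and use it to write $N(m,n,j) := |\{(\pi,\rho)\in\Int(n)\times\NC(n): |\pi|=m,\ n-d_H(\pi,\rho)=j\}|$ as an explicit product of binomial coefficients. Then
\[
2^{n-1}\Cat_n\cdot c_n \;=\; \sum_{m,\,j} j\cdot N(m,n,j),
\]
and after a Vandermonde-type manipulation and swap of summations this should collapse to a single sum in $m$ whose summand is an explicit rational-binomial expression.

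With such a closed form in hand, I would then carry out the asymptotics. Stirling's approximation concentrates the summand near a critical ratio $m/n \to \alpha$ for an explicit $\alpha \in (0,1)$, and a Laplace-method expansion at this saddle yields the leading $c_n \sim n/3$ without difficulty. An alternative, more conceptual path \textemdash{} the one hinted at in the abstract \textemdash{} is to recognise, for fixed $\pi \in \Int(n)$, the polynomial $\sum_{\rho} t^{\,n - d_H(\pi,\rho)}$ as a moment of a $\boxplus_t$-power of a measure naturally attached to $\pi$; then $c_n$ is literally a derivative at $t=1$ of that semigroup, and the free-cumulant combinatorics of $\boxplus_t$ produces the mean in closed form. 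Either route reduces the asymptotic task to an elementary expansion.

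The main obstacle is pinning down the exact constant $28/27$, not the leading $n/3$. The linear term drops out of even a crude Laplace estimate, but the $O(1)$ correction is sensitive to every subleading term in Stirling, to the Gaussian prefactor coming from the curvature at the saddle, and to the tail contributions from small $m$ and small $n-m$. Getting all of these pieces to cancel cleanly into the single rational number $28/27$ is where the real bookkeeping sits; on the free-probability side, the analogous subtlety is that the $t=1$ derivative must be computed with enough precision to see an $n$-independent shift beyond the linear-in-$n$ piece that comes from the additivity of free cumulants.
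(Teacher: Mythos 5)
There is a genuine gap in your primary route. You define $N(m,n,j)$ as the number of pairs $(\pi,\rho)\in\Int(n)\times\NC(n)$ with $|\pi|=m$ and $n-\dH(\pi,\rho)=j$, and propose to read this off from Proposition \ref{prop:N(m,n,j)}. But that proposition counts something else entirely: the symbol $\bj$ there is the block-size profile of $\pi$, and the pairs being counted are \emph{meandric} pairs only, i.e.\ those with $\dH(\pi,\rho)=n-1$ (one component). The tree bijection of Section \ref{sec:bijection} that underlies it applies precisely because $\Gamma(\pi,\rho)$ is a tree in the meander case (Corollary \ref{cor:equivalence for meander}); for pairs with $j\geq 2$ components the graph $\Gamma(\pi,\rho)$ is disconnected and/or has cycles, and no analogue of the bijection is available. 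The full distribution of $\dH(\pi,\rho)$ over $\Int(n)\times\NC(n)$ is not computed anywhere in the paper, and your sum $\sum_{m,j} j\cdot N(m,n,j)$ therefore has no closed form to collapse. Consequently the subsequent Laplace-method analysis has no starting point.

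Your ``alternative, more conceptual path'' is in fact essentially the paper's actual argument, but as stated it is only a gesture and also slightly misformulated: the paper does not treat $\sum_{\rho}t^{\,n-\dH(\pi,\rho)}$ as a $\boxplus$-semigroup moment. Instead it uses the identity $\dH(\pi,\rho)=|\pi|+|\rho|-2|\pi\vee\rho|$ for $\pi\in\Int(n)$ (Proposition \ref{prop:d_H for interval partitions}), averages the easy terms $|\pi|$ and $|\rho|$ via \eqref{avgeblocks}, and isolates the hard term in the generating function $\Psi^{(2)}(z,t)=\sum_n y^n\sum_{\pi,\rho}z^{|\pi|}t^{|\pi\vee\rho|}$. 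The block decomposition of $\pi\vee\rho$ over $\theta\in\NC(k)$ (Lemma \ref{le:1015}) puts $\Psi^{(2)}$ in the form \eqref{ftog}, and Proposition \ref{pr:101} expresses $\partial_t\Psi^{(2)}|_{t=1}$ in closed form from $\Psi^{(2)}(\cdot,1)$ alone; this yields the exact formula of Proposition \ref{prop:expression for d_n}, from which the constant $-28/27$ falls out by singularity analysis rather than by a saddle-point computation. To make your proof work you would need to supply this chain (or an equivalent one); as written, neither of your two routes reaches the exact $O(1)$ term, and the first one cannot even produce the leading order.
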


We mention that the limit stated in Equation (\ref{eqn:12b}) is found
by making $n\to\infty$ in an explicit expression which holds for
a fixed value of $n$, and which is indicated precisely in Proposition
\ref{prop:expression for d_n} below. The calculations leading to
this expression are related to the idea of taking the derivative at
$t=1$ in a semigroup for the operation $\boxplus$ of free probability.
The connection to the operation $\boxplus$ is explained in Remarks
\ref{rem:82} and \ref{rem:84} of Section 8 (but the proof of Proposition
\ref{prop:expression for d_n} is then presented in a self-contained
way, and can be followed without assuming free probability background).

It is remarkable that the difference $d_{n}-\frac{2}{3}n$ has a plain
limit as $n\to\infty$, rather than having some asymptotic behaviour
which involves lower powers of $n$. We note that this kind of phenomenon
also appears in other calculations of averages of distances in $NC(n)$,
and is reflected in the corresponding results about average number
of components in a random meandric system (see Equation (\ref{eqn:17b})
of Proposition \ref{prop:17}, and the corresponding Equations (\ref{eqn:12c})
and (\ref{eqn:15c}) for $c$'s instead of $d$'s).

A suggestive name for the meandric systems of order $n$ that correspond
to couples $(\pi,\rho)\in\Int(n)\times\NC(n)$ is ``meandric systems
with shallow top''. Indeed, the standard drawing of the meander $M\left(\pi,\rho\right)$
with a horizontal infinite line $L$, is shallow, in the sense that
every point on $L$ has at most two lines above it, if and only if
$\pi$ is an interval partition (see Sections \ref{sec:Meandric-systems}
and \ref{sec:Interval-Partitions}). When we convert back from Hasse
diagrams to meandric systems, Theorem \ref{thm:main-shallow meanders}
will thus give us the number of meanders (i.e.~connected meandric
systems) of order $n$ and with shallow top. On the other hand, denoting
by \marginpar{$c_{n}$}$c_{n}$ the expected number of components
of a random meandric system of order $n$ with shallow top, we have
$c_{n}=n-d_{n}$ by \eqref{eq:M=00003Dn-d_H}, hence Theorem \ref{thm:avg distance from interval partition}
gets converted into the statement that 
\begin{equation}
\limn\bigl(c_{n}-\frac{n}{3}\bigr)=28/27.\label{eqn:12c}
\end{equation}

\subsubsection*{Bounds for a general meandric system}

In connection to Theorem \ref{thm:avg distance from interval partition},
it is relevant to ask: if we were to average the distances $\dH(\pi,\rho)$
for all $\pi,\rho\in NC(n)$ (rather than conditioning $\pi$ to be
in $\Int(n)$), would that average still follow a regime of ``constant
times $n$'', where the constant is contained in $(0,1)$? Or, at
the very least, does that average admit some lower and upper bounds
of the form $\alpha\,n$ and $\beta\,n$, with $\alpha,\beta\in(0,1)$?
The existence of a lower bound $\alpha\,n$ is in fact immediate,
because it is easy to prove (c.f. Corollary \ref{cor:lower-bound for average dist})
that 
\[
\frac{1}{\Cat_{n}^{2}}\ \sum_{\pi,\rho\in NC(n)}\dH(\pi,\rho)\geq\frac{n-1}{2},\ \ \forall\,n\in\bN.
\]
The methods of the present paper allow us to also prove the existence
of an upper bound $\beta\,n$. In order to obtain it, we use the following
inequality: 
\begin{equation}
\dH(\pi,\rho)\leq|\pi|+|\rho|-2|\pi\vee\rho|,\ \ \forall\,\pi,\rho\in\NC(n),\label{eqn:13a}
\end{equation}
where ``$\vee$'' is the join operation in $\NC(n)$. This is just
a triangle inequality, where the right-hand side is $\dH(\pi,\pi\vee\rho)+\dH(\pi\vee\rho,\rho)$;
but it is nevertheless very useful, due to the following proposition,
which is obtained along similar lines to the proof of Theorem \ref{thm:avg distance from interval partition}
-- details are given in Section \ref{sec:Distance-distributions}
below (cf. Proposition \ref{prop:exact expression for mu_n} there). 
\begin{prop}
\label{prop:upper bound for average distance} 
\[
\lim_{n\to\infty}\frac{1}{n}\cdot\frac{1}{\Cat_{n}^{2}}\sum_{\pi,\rho\in\NC(n)}|\pi|+|\rho|-2|\pi\vee\rho|=\frac{3\pi-8}{8-2\pi}<0.83.
\]
\end{prop}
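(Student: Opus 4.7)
The plan is to decompose the average by linearity of expectation, isolate the easy first-moment contribution, and reduce the proposition to the asymptotic computation of $\mathbb{E}[|\pi\vee\rho|]$. With expectations taken over independent uniform $\pi,\rho \in \nc(n)$,
\[
\frac{1}{\Cat_n^2}\sum_{\pi,\rho\in\nc(n)}\bigl(|\pi|+|\rho|-2|\pi\vee\rho|\bigr)\;=\;2\,\mathbb{E}[|\pi|]\;-\;2\,\mathbb{E}[|\pi\vee\rho|].
\]
For the first-moment piece I would use the Narayana identity $N(n,k)=\tfrac{1}{n}\binom{n}{k}\binom{n}{k-1}$ together with Vandermonde convolution to evaluate $\sum_{\pi\in\nc(n)}|\pi|=\binom{2n-1}{n-1}=\tfrac{1}{2}\binom{2n}{n}$, giving $\mathbb{E}[|\pi|]=(n+1)/2$ exactly, which contributes $1$ to the limit after dividing by $n$. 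The proposition then reduces to the claim $\lim_{n\to\infty}\mathbb{E}[|\pi\vee\rho|]/n=\tfrac{16-5\pi}{4(4-\pi)}$, since $1-2\cdot\tfrac{16-5\pi}{4(4-\pi)}=\tfrac{3\pi-8}{8-2\pi}$ by routine algebra.

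For the join term, my plan is to mirror the strategy used for Theorem~\ref{thm:avg distance from interval partition}: first derive an exact closed-form expression for $\sum_{\pi,\rho\in\nc(n)}|\pi\vee\rho|$ valid for fixed $n$ (the content of Proposition~\ref{prop:exact expression for mu_n} referenced in the statement, in analogy with Proposition~\ref{prop:expression for d_n}), and then extract the asymptotic. To obtain the exact formula I would invoke the free-probabilistic mechanism described in Remarks~\ref{rem:82} and~\ref{rem:84}: interpret $\mathbb{E}[|\pi\vee\rho|]$ as a derivative at $t=1$ along a $\boxplus$-semigroup $(\mu_t)_{t\geq 1}$, exploiting the fact that free additive convolution governs the combinatorics of the join of independent uniform non-crossing partitions at the level of block counts. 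Upon expanding the derivative, one expects a finite sum indexed by the number of blocks $m=1,\dots,n$ with summands that are explicit products of binomial coefficients.

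The main obstacle, as I see it, is identifying the correct semigroup and the correct observable so that the derivative at $t=1$ delivers $\mathbb{E}[|\pi\vee\rho|]$ itself, rather than $\mathbb{E}[|\pi|]$ or a Hasse-distance quantity. Once the finite-$n$ formula is in hand, the asymptotic is a routine saddle-point or Riemann-sum argument: rescaling $m=xn$ converts the sum into an integral against the Marchenko--Pastur density $\tfrac{1}{2\pi}\sqrt{(4-x)/x}$ on $[0,4]$, and integrals of this shape produce the transcendental $\pi$ in the final answer through the density's normalization. Finally, the numerical bound is a direct check: $\tfrac{3\pi-8}{8-2\pi}\approx 1.4248/1.7168\approx 0.8299<0.83$.
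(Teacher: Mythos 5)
Your reduction is sound and matches the paper's: the first-moment term $\mathbb{E}[|\pi|]=(n+1)/2$ is handled identically (Equation \eqref{avgeblocks}), and the derivative-at-$t=1$ mechanism of Proposition \ref{pr:101} applied to $\Psi^{(3)}(z,t)=\sum_n z^n\sum_{\pi,\rho}t^{|\pi\vee\rho|}$ is exactly how the paper extracts $\sum_{\pi,\rho}|\pi\vee\rho|$. The ``obstacle'' you flag (finding the right observable) is resolved in Lemma \ref{le:1015} with $i=3$: grouping pairs $(\pi,\rho)$ by their join $\theta=\pi\vee\rho$ shows that $\Psi^{(3)}$ has the multiplicative form \eqref{ftog} with $g_m=\left|\left\{\tau,\sigma\in\nc(m)\mid\tau\vee\sigma=1_m\right\}\right|$, so the observable really is $t^{|\pi\vee\rho|}$ and no further search for ``the correct semigroup'' is needed.

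The genuine gap is in your final step. Proposition \ref{pr:101} does not deliver ``a finite sum indexed by $m$ with summands that are explicit products of binomial coefficients'': the $g_m$ above admit no such closed form. What it delivers (Proposition \ref{prop:exact expression for mu_n}) is
\[
\widetilde{b}_{n}=-n-1+\frac{2n}{\Cat_{n}^{2}}\,\bigl[z^{n}\bigr]\log\Bigl(1+\sum_{k\ge1}\Cat_{k}^{2}z^{k}\Bigr),
\]
obtained from the identity $z\Psi_1^{(3)}(z,1)/\bigl(1+\Psi^{(3)}(z,1)\bigr)=z\frac{\partial}{\partial z}\log\bigl(1+\Psi^{(3)}(z,1)\bigr)$. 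A saddle-point or Riemann-sum argument over a rescaled block count $m=xn$ has nothing to latch onto in this expression. The actual crux, which your proposal labels ``routine'' and never identifies, is the singularity-analysis statement that for $\Phi(z)=1+\sum_{k\ge1}\Cat_k^2z^k$ (radius of convergence $1/16$) one has $\lim_{n\to\infty}\frac{1}{\Cat_n^2}[z^n]\log\Phi(z)=1/\Phi(1/16)$, combined with the nontrivial evaluation $\Phi(1/16)=4(4-\pi)/\pi$ taken from Lando--Zvonkin. Your intuition about where the transcendental $\pi$ enters is right in spirit, since $\Phi(1/16)$ can be written as $\int_0^4\!\int_0^4\frac{16}{16-xy}\,d\mu(x)\,d\mu(y)$ with $\mu$ the Marchenko--Pastur law; but without the logarithmic coefficient asymptotics and the value of $\Phi(1/16)$, the route you describe stalls before reaching $\frac{3\pi-8}{8-2\pi}$.
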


By using Proposition \ref{prop:upper bound for average distance}
and inequality \eqref{eqn:13a}, one immediately obtains the needed
upper bound for average distance $\dH(\pi,\rho)$, and hence the following
corollary.
\begin{cor}
\label{cor:14} (1) Denote by $d'_{n}$\marginpar{$d'_{n}$} the expected
distance between two randomly chosen non-crossing partitions in $\nc\left(n\right)$.
Then for any $\varepsilon>0$ and large enough $n$ 
\[
\left(0.5-\varepsilon\right)n\leq d'_{n}\leq0.83n.
\]
(2) Denote by $c_{n}'$\marginpar{$c_{n}'$} the expected number of
components of a random meandric system of order $n$. Then for any
$\varepsilon>0$ and large enough $n$ 
\[
0.17n\leq c_{n}'\leq\left(0.5+\varepsilon\right)n.
\]
\end{cor}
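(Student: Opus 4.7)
The plan is to derive Corollary \ref{cor:14} as an essentially immediate consequence of three ingredients already assembled just above its statement: the pointwise triangle inequality \eqref{eqn:13a}, Proposition \ref{prop:upper bound for average distance}, and the elementary lower bound of Corollary \ref{cor:lower-bound for average dist}. My strategy is to first handle Part (1), which concerns $d'_n$, and then to read off Part (2), which concerns $c'_n$, using the identity $c'_n = n - d'_n$ obtained by averaging \eqref{eq:M=00003D00003Dn-d_H} over the uniform measure on $\NC(n)^2$.

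For the upper bound in Part (1) I would sum the pointwise inequality \eqref{eqn:13a} over all $(\pi,\rho) \in \NC(n)^2$ and divide by $\Cat_n^2$ to get
\[
d'_n \;\leq\; \frac{1}{\Cat_n^2}\sum_{\pi,\rho \in \NC(n)} \bigl(|\pi|+|\rho|-2|\pi \vee \rho|\bigr).
\]
After dividing by $n$ and letting $n \to \infty$, Proposition \ref{prop:upper bound for average distance} identifies the limit as $(3\pi-8)/(8-2\pi)$, which is strictly below $0.83$; hence $d'_n \leq 0.83\, n$ for every sufficiently large $n$. For the lower bound I would invoke Corollary \ref{cor:lower-bound for average dist} directly, which yields $d'_n \geq (n-1)/2$ and hence $d'_n/n \geq (1-1/n)/2 > 0.5 - \varepsilon$ for all large enough $n$.

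Part (2) then follows at once by substituting these two bounds on $d'_n$ into $c'_n = n - d'_n$, producing the chain $0.17\,n = (1-0.83)n \leq c'_n \leq n - (0.5 - \varepsilon) n = (0.5 + \varepsilon)\, n$ for all sufficiently large $n$, which is exactly the assertion of Part (2).

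Because all the depth of the argument is packaged inside the cited Proposition \ref{prop:upper bound for average distance} and Corollary \ref{cor:lower-bound for average dist}, there is no genuine obstacle in this corollary itself. The only routine points to verify are that the pointwise inequality \eqref{eqn:13a} is averaged against precisely the uniform product measure on $\NC(n)^2$ used to define $d'_n$, and that the "sufficiently large $n$" thresholds coming from Proposition \ref{prop:upper bound for average distance} and from the elementary convergence $(1-1/n)/2 \to 1/2$ are arranged compatibly with the given $\varepsilon$.
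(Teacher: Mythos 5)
Your proposal is correct and follows exactly the route the paper intends: the upper bound on $d'_n$ comes from averaging inequality \eqref{eqn:13a} and invoking Proposition \ref{prop:upper bound for average distance} (whose limit $\frac{3\pi-8}{8-2\pi}\approx 0.8299<0.83$), the lower bound comes from Corollary \ref{cor:lower-bound for average dist}, and part (2) follows from $c'_n=n-d'_n$. The paper treats this corollary as immediate and gives no separate proof, so your write-up matches its argument.
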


A natural question prompted by Corollary \ref{cor:14} is, of course:
\[
\mbox{Does}\ \lim_{n\to\infty}\frac{c_{n}'}{n}\ \mbox{ exist? (If yes, what is it?) }
\]
Computational experiments done for some fairly large values of $n$
\cite{summer-project} suggest that the limit exists and is $\approx0.23$.
This is smaller than the constants of $\approx0.33$ and $\approx0.29$
arising out of Equations \eqref{eqn:12c} and respectively \eqref{eqn:15c},
which suggests the idea that the number of components of a random
meandric system $M(\pi,\rho)$ increases when one conditions $\pi$
to be an interval partition. This agrees in spirit with the fact,
stated in Remark \ref{rem:exp growth rate of shallow meanders}, that
interval partitions have, on average, fewer meandric partners than
general non-crossing partitions.

\subsubsection*{Distances from a fixed base-point in $\Int(n)$}

A variation of the framework used in Theorems \ref{thm:main-shallow meanders}
and \ref{thm:avg distance from interval partition} is obtained by
fixing a ``base-point'' $\lambda_{n}\in\Int(n)$, and by focusing
on distances in the Hasse diagram of $\NC(n)$ which are measured
from $\lambda_{n}$. When converted to the language of meandric systems,
this corresponds to a type of question that was analyzed quite thoroughly
in Section 6.3 of the paper \cite{di1997meander}. We would like nevertheless
to discuss here a couple of illustrative examples of base-points $\lambda_{n}$,
and record some slight improvements in the formulas related to them,
which follow from the methods used in the present paper.

For the first example of this subsection, recall from \eqref{eq:M=00003Dn-d_H}
and Remark \ref{rem:exp growth rate of shallow meanders} that a partition
$\rho\in\nc\left(n\right)$ is called a meandric partner for our base-point
$\lambda_{n}$ when it satisfies $\dH(\lambda_{n},\rho)=n-1$.
\begin{prop}
\label{prop:16} For $\ell,m\in\mathbb{N}$, let $\lambda_{\ell,m}\in\Int\left(\ell m\right)$
denote the interval partition consisting of $m$ blocks of size $\ell$
each: 
\begin{equation}
\lambda_{\ell,m}=\bigl\{\,\{1,\ldots,\ell\},\{\ell+1,\ldots,2\ell\},\ldots,\{(m-1)\ell+1,\ldots,m\ell\}\,\bigr\}\in\Int(\ell m).\label{eqn:16a}
\end{equation}
Then the number of meandric partners for $\lambda_{\ell,m}$ is equal
to $\mathrm{FCat}_{m}^{(\ell)}\cdot\ell^{m-1}$, where 
\[
\mathrm{FCat}_{m}^{(\ell)}:=\frac{1}{(\ell-1)m+1}\left(\begin{array}{c}
\ell m\\
m
\end{array}\right)\ \ \mbox{ (a Fuss-Catalan number).}
\]
\end{prop}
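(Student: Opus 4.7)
The plan is to derive this count as a specialization of the more refined enumeration given by Proposition \ref{prop:N(m,n,j)}, which in turn is an output of the tree bijection developed in Section 6. That refined formula should be sensitive not merely to the number $m$ of blocks of $\pi$, but to the full block-size vector, so that one can extract the number of meandric partners of a single prescribed interval partition (rather than summing over all $\pi\in\Int(n)$ with $|\pi|=m$, which is what Theorem \ref{thm:main-=00003D000023shallow meanders} provides).

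The first step is to specialize to $\pi=\lambda_{\ell,m}$, whose block-size vector $(\ell,\ell,\ldots,\ell)$ is constant. Under the tree bijection of Section 6, a meandric partner $\rho$ of $\lambda_{\ell,m}$ should correspond to a plane tree on $m$ nodes (one per block of $\lambda_{\ell,m}$) together with local attachment data inside each block. The constant-block-size hypothesis removes all arity constraints inherited from the block sizes, so the tree-structural count becomes the number of $\ell$-ary plane trees on $m$ internal nodes, i.e.~the Fuss--Catalan number $\mathrm{FCat}_m^{(\ell)} = \frac{1}{m}\binom{\ell m}{m-1}$. The local attachment data should then contribute an independent factor of $\ell$ for each of the $m-1$ non-root blocks; the root block (by convention, the one containing the position $1$) is pinned by the planar convention, which accounts for the exponent $m-1$ rather than $m$. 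Multiplying the two factors produces the claimed value $\mathrm{FCat}_m^{(\ell)}\cdot\ell^{m-1}$.

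The main obstacle lies precisely in the specialization step: one must verify that, under the constant-block-size hypothesis, the refined count of Proposition \ref{prop:N(m,n,j)} really does factor cleanly into a tree-structural factor and an intra-block labeling factor, with the root convention giving exponent $m-1$ and not $m$. The alternative form $\mathrm{FCat}_m^{(\ell)}=\frac{1}{m}\binom{\ell m}{m-1}$ is useful for matching combinatorial expressions at this stage. As sanity checks one can verify the boundary cases $\ell=1$ and $m=1$: in the first, $\lambda_{1,m}$ is the partition into singletons and admits the unique meandric partner given by the single block $\{1,\ldots,m\}$, matching $\mathrm{FCat}_m^{(1)}\cdot 1^{m-1}=1$; in the second, $\lambda_{\ell,1}$ is the one-block partition and admits as unique meandric partner the partition into singletons, matching $\mathrm{FCat}_1^{(\ell)}\cdot\ell^{0}=1$.
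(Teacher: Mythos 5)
Your top-level strategy coincides with the paper's: Proposition \ref{prop:16} is obtained by specializing the refined count $N(m,n;\bj)$ of Proposition \ref{prop:N(m,n,j)} to the block-size vector of $\lambda_{\ell,m}$. One point deserves to be made explicit, though: $N(m,n;\bj)$ counts pairs $(\pi,\rho)$ ranging over \emph{all} interval partitions $\pi$ with block-size distribution $\bj$, and it isolates the meandric partners of the single partition $\lambda_{\ell,m}$ only because $\lambda_{\ell,m}$ is the unique element of $\Int(\ell m)$ having $m$ blocks all of size $\ell$. You use this implicitly when you note that the block-size vector is constant, but it is the observation that makes the reduction legitimate.

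The genuine gap is that you never actually carry out the specialization, and the ``main obstacle'' you identify is not an obstacle at all. Proposition \ref{prop:N(m,n,j)} already supplies a closed formula, so there is no need to revisit the tree bijection or to establish that the count ``factors cleanly into a tree-structural factor and an intra-block labeling factor.'' Setting $n=\ell m$, $j_{\ell}=m$ and $j_{k}=0$ for $k\neq\ell$ in \eqref{eq:Njformula} gives
\[
N(m,\ell m;\bj)=\frac{m\,(\ell m-1)!}{((\ell-1)m+1)!}\cdot\frac{\ell^{m}}{m!}=\frac{\ell^{m}\,(\ell m-1)!}{(m-1)!\,((\ell-1)m+1)!},
\]
while
\[
\mathrm{FCat}_{m}^{(\ell)}\cdot\ell^{m-1}=\frac{\ell^{m-1}}{(\ell-1)m+1}\cdot\frac{(\ell m)!}{m!\,((\ell-1)m)!}=\frac{\ell^{m}\,(\ell m-1)!}{(m-1)!\,((\ell-1)m+1)!},
\]
and the two agree; this one-line regrouping is the entire proof. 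By contrast, the decomposition you propose to verify (the number of $\ell$-ary plane trees on $m$ internal nodes, times $\ell$ independent choices at each non-root block) is only a heuristic reading of the final answer: the trees produced by the bijection of Section \ref{sec:bijection} are bicolored fat-trees with $\ell m$ edges, $m$ black vertices of degree $\ell$ and $(\ell-1)m+1$ white vertices, not $\ell$-ary plane trees on $m$ nodes, so turning your factorization into a proof would require an additional bijective argument that the proposal does not supply. Your boundary checks $\ell=1$ and $m=1$ are correct but do not close this gap.
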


The case $\ell=2$ of Proposition \ref{prop:16} (which says that
the number of meandric partners for $\bigl\{\,\{1,2\},\{3,4\},\ldots,\{2m-1,2m\}\,\bigr\}$
is equal to $2^{m-1}\Cat_{m}$) was obtained in Equation (6.58) of\linebreak{}
\cite{di1997meander}. The asymptotics with general fixed $\ell$
and $m\to\infty$ for the number of meandric partners of the $\lambda_{\ell,m}$
indicated in (\ref{eqn:16a}) (but not the enumeration by Fuss-Catalan
numbers) appears in Equation (6.68) of the same paper \cite{di1997meander}.
Similarly to Theorem \ref{thm:main-shallow meanders}, the statement
of Proposition \ref{prop:16} is a consequence of a more general formula,
given in Proposition \ref{prop:N(m,n,j)} below, which in turn follows
from the tree bijection presented in Section 6 of the paper.

We find it likely that the methods used in the present paper will
work for various other special situations where the $\lambda_{n}$'s
are interval partitions, but that new ideas are needed when one picks
$\lambda_{n}$'s in $\NC(n)$ which have nestings. The hardest to
handle, along these lines, is the example where $\lambda_{n}$ is
the ``rainbow'' partition of $\{1,\ldots,n\}$: this is the partition
with blocks $\{1,n\},\{2,n-1\},\ldots$ including a possible singleton
block at $(n+1)/2$ when $n$ is odd. In relation to this, we note
that numerical experiments support the idea, also brought up in the
last paragraph of Section 6 of \cite{di1997meander}, that for every
$n\in\bN$, the rainbow partition of $\{1,\ldots,n\}$ is the partition
in $\NC(n)$ which has the largest number of meandric partners. More
precisely, we make the following conjecture.
\begin{conjecture}
\noindent \label{conj:rainbow partitions} Let $n$ be a positive
integer, and consider the orbit of the rainbow partition of $\{1,\ldots,n\}$
under the Kreweras complementation map (see Section \ref{sec:Background-on-NC(n)}).
By symmetry, all $n$ partitions in this orbit have the same number
of meandric partners. We conjecture that this orbit constitutes the
exact set of partitions in $\nc\left(n\right)$ with the largest number
of meandric partners. 
\end{conjecture}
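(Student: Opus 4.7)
The proposal has two parts. The symmetry claim --- that all $n$ partitions in the Kreweras orbit of the rainbow have the same number of meandric partners --- is the easy half, and I would dispatch it as follows. Kreweras complementation $\Kr : \nc(n) \to \nc(n)$ is an order-reversing lattice anti-automorphism, so it sends each cover pair $\pi \lessdot \sigma$ to a cover pair $\Kr(\sigma) \lessdot \Kr(\pi)$, and is therefore an isometry of the (undirected) Hasse diagram of $\nc(n)$. For each $\pi \in \nc(n)$, the bijection $\rho \mapsto \Kr(\rho)$ thus identifies the meandric partners of $\pi$ with those of $\Kr(\pi)$; in particular, the meandric-partner count is constant on every Kreweras orbit. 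A direct calculation (or an appeal to the standard description of the $\Kr$-action via $\Kr^2 =$ cyclic rotation) confirms that the orbit of the rainbow has cardinality exactly $n$.

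The content of the conjecture is the maximality statement, and here I would attempt a monovariant / local-move argument. Define $f(\pi) := |\{\rho \in \nc(n) : \dH(\pi,\rho) = n-1\}|$; the task is to show $f$ is maximized precisely on the Kreweras orbit of the rainbow. The rainbow is characterized, up to this orbit, by the fact that its blocks form a single nested chain, so no two of its blocks are ``parallel'' in the sense that neither contains the other. The plan is to introduce an elementary move $\pi \mapsto \pi'$ that picks two parallel blocks of $\pi$ and nests one inside the other --- strictly decreasing the number of parallel block pairs --- and to prove that $f(\pi) \leq f(\pi')$, with equality only when $\pi$ already lies in the Kreweras orbit of the rainbow. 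Iterating such moves would drive any starting $\pi$ into that orbit, yielding both the maximum and its uniqueness. Concretely, each comparison $f(\pi) \leq f(\pi')$ would be carried out by constructing an explicit injection from the meandric partners of $\pi$ into those of $\pi'$, together with a distinguishable ``surplus'' partner of $\pi'$ that witnesses the strict inequality whenever $\pi$ still has a parallel block pair.

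The main obstacle --- and the reason the statement remains a conjecture --- is the absence of any usable closed form for $f(\pi)$ when $\pi$ is a general element of $\nc(n)$: computing $f(\pi)$ is a restricted meander-enumeration problem, and even the asymptotic growth rate of the unrestricted meander count is famously open (see \cite{di2000folding,lando2004graphs}). The tree bijection of Section 6 produces the clean formulas of Theorem \ref{thm:main-=00003D000023shallow meanders} and Proposition \ref{prop:16} precisely because the top partition is conditioned to lie in $\Int(n)$ --- the ``shallowest'' case, at the opposite end of the nesting spectrum from the rainbow --- and it is not clear how to adapt the construction once nesting on top is permitted. Hence the hardest step is likely to be the injection in the previous paragraph, which must be described uniformly enough to cover every non-rainbow $\pi$ and every legal parallel-to-nested move on it. An alternative attack could translate $f(\pi)$ into character-sum language on the symmetric group, or into the $\boxplus$-calculus of free probability sketched in Section 8, where inequalities between moment-style quantities could in principle be established without an explicit formula. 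As a prudent intermediate milestone I would first prove the conjecture within the sub-class of tops $\pi \in \Int(n)$ using Proposition \ref{prop:N(m,n,j)}, as a testing ground for the full argument, and extend the experimental evidence to larger $n$ before committing to a particular choice of monovariant.
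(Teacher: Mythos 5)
This statement is a conjecture in the paper: the authors prove nothing beyond the parenthetical symmetry assertion, so for the main (maximality) claim there is no proof to compare against, and your proposal correctly does not claim to supply one. Your argument for the symmetry preamble is correct and is exactly the paper's intended justification: $\Kr$ is an order-reversing bijection whose inverse is also order-reversing, hence it maps cover pairs to cover pairs and is an isometry of $\cH_{n}$, so $\rho\mapsto\Kr(\rho)$ bijects the meandric partners of $\pi$ with those of $\Kr(\pi)$ and the partner count is constant on Kreweras orbits. That part stands.

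The proposed strategy for the maximality claim, however, has a concrete defect you should be aware of before investing in it. The move ``nest one of two parallel blocks inside the other'' preserves the multiset of block sizes, so iterating it from an arbitrary $\pi$ can only terminate at a nested (parallel-pair-free) partition with the \emph{same} block sizes as $\pi$; the set of such terminal states is therefore much larger than the Kreweras orbit of the rainbow. For instance $1_{n}$ and $0_{n}$ admit no legal move yet have exactly one meandric partner each (the minimum possible, since $\dH(1_{n},\rho)=n-1$ forces $\rho=0_{n}$ and vice versa), while the partition $\{\{1,2\},\{3,4\}\}\in\nc(4)$, which \emph{does} lie in the Kreweras orbit of the rainbow, still has a parallel block pair and so is not terminal. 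Hence the monovariant's fixed points neither contain nor are contained in the conjectured maximizing set, and the scheme ``$f$ weakly increases along moves, with equality only on the orbit'' cannot hold as stated. Your closing assessment is the right one: absent a usable formula for the partner count of a nested top (this is essentially the semi-meander problem), the conjecture remains open, and the paper treats it as such.
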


We also note here that counting the meandric partners for the rainbow
partition amounts to counting some diagrams known in the literature
under the name of \emph{semi-meanders} (see e.g.~\cite[Section 2.2]{di1997meander}
or \cite[Section 2.2.1]{lacroix2003approaches}). This is a well-known
problem, which is believed to be hard. The conjecture stated above
is not about precisely counting semi-meanders, but about proving an
inequality between the number of semi-meanders and the cardinality
of other sets of meanders with prescribed top.

Finally, let us also have a look at how things go when, instead of
focusing on meandric partners, we are interested in expected number
of components. We give, for illustration, the following proposition.
\begin{prop}
\label{prop:17} As above, let $\lambda_{2,m}=\left\{ \left\{ 1,2\right\} ,\left\{ 3,4\right\} ,\ldots,\left\{ 2m-1,2m\right\} \right\} \in\nc\left(2m\right)$,
and consider the average distance \marginpar{$\widetilde{d}_{2,m}$}
\[
\widetilde{d}_{2,m}:=\frac{1}{\Cat_{2m}}\sum_{\rho\in\nc\left(2m\right)}\,\dH\left(\lambda_{2,m},\rho\right).
\]
Then one has the explicit formula 
\begin{equation}
\widetilde{d}_{2,m}=\frac{2^{2m-1}\cdot\binom{2m}{m}}{\Cat_{2m}}-\frac{3}{2}.\label{eqn:17a}
\end{equation}
As a consequence of this formula, it follows that: 
\begin{equation}
\lim_{m\to\infty}\left(\widetilde{d}_{2,m}-\sqrt{2}m\right)=\frac{7\sqrt{2}}{16}-\frac{3}{2}.\label{eqn:17b}
\end{equation}
\end{prop}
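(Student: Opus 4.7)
The plan is to evaluate $\sum_{\rho\in\NC(2m)}\dH(\lambda_{2,m},\rho)$ via three manageable sums, exploiting that $\lambda_{2,m}$ is an interval partition. A fact established earlier in the paper for the shallow-top setting is that when $\pi\in\Int(n)$, the triangle inequality (\ref{eqn:13a}) holds with equality, so $\dH(\lambda_{2,m},\rho) = m + |\rho| - 2|\lambda_{2,m}\vee\rho|$. Summing over $\rho$ gives
\[
\Cat_{2m}\,\widetilde{d}_{2,m} = m\,\Cat_{2m} + \sum_\rho|\rho| - 2\sum_\rho|\lambda_{2,m}\vee\rho|.
\]
The first sum is classical: since Kreweras complementation satisfies $|\rho|+|\Kr(\rho)|=2m+1$, averaging gives $\sum_\rho|\rho|=\tfrac12(2m+1)\Cat_{2m}$.

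The core work is the second sum. I introduce the surjection $\Phi:\NC(2m)\to\NC(m)$ coming from the isomorphism $\NC(m)\cong[\lambda_{2,m},\hat{1}_{\NC(2m)}]$, so that $|\lambda_{2,m}\vee\rho|=|\Phi(\rho)|$. The fibre factors block-by-block: $|\Phi^{-1}(\sigma)|=\prod_{V\in\sigma}M_{|V|}$, where $M_k:=\#\{\rho'\in\NC(2k):\rho'\vee\lambda_{2,k}=\hat 1\}$. Summing over $\sigma$ recovers $\Cat_{2m}=\sum_{\sigma\in\NC(m)}\prod_V M_{|V|}$, which is the moment--cumulant relation identifying $\{\Cat_{2k}\}$ as moments and $\{M_k\}$ as free cumulants of a probability measure $\mu$. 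Consequently
\[
\sum_\rho|\lambda_{2,m}\vee\rho| = \sum_{\sigma\in\NC(m)}|\sigma|\prod_V M_{|V|} = \frac{d}{dt}\bigg|_{t=1}m_m^{(t)},
\]
the derivative at $t=1$ of the $m$-th moment of the free convolution power $\mu^{\boxplus t}$ (cf.~Remarks \ref{rem:82}--\ref{rem:84}). At the generating-function level, writing $E(z):=\sum_{k\geq 0}\Cat_{2k}z^k$, the implicit Cauchy/$R$-transform relation yields $\sum_{m\geq 1}\bigl(\sum_\rho|\lambda_{2,m}\vee\rho|\bigr)z^m = (E-1)(E+zE')/E$.

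The key algebraic input is the identity $(E+zE')/E = \tfrac34 + \tfrac{1}{4\sqrt{1-16z}}$. Setting $s:=\sqrt{1-16z}$, logarithmic differentiation of the defining relation $E(z)^2 = C(4z) = \tfrac{1-s}{8z}$ (immediate from $C(x)=\tfrac{1-\sqrt{1-4x}}{2x}$) produces $2E'/E = -s'/(1-s) - 1/z$; substituting $s'=-8/s$ and simplifying $\bigl[(1+s)(1-s)-16z\bigr]/[s(1-s)]=0$ gives $1+4zE'/E = 1/s$, equivalent to the desired form. Substituting into $(E-1)(E+zE')/E$ yields $(E+zE') - \tfrac34 - \tfrac{1}{4\sqrt{1-16z}}$, and extracting the coefficient of $z^m$ (using $[z^m](E+zE')=(m+1)\Cat_{2m}$ and $[z^m]\tfrac{1}{4\sqrt{1-16z}} = 2^{2m-2}\binom{2m}{m}$ for $m\geq 1$) gives the closed form $\sum_\rho|\lambda_{2,m}\vee\rho| = (m+1)\Cat_{2m} - 2^{2m-2}\binom{2m}{m}$.

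Combining the three evaluations, the $\Cat_{2m}$ coefficients sum to $m + \tfrac{2m+1}{2} - 2(m+1) = -\tfrac32$, so $\Cat_{2m}\,\widetilde{d}_{2,m} = -\tfrac32\Cat_{2m} + 2^{2m-1}\binom{2m}{m}$, which is (\ref{eqn:17a}) after division by $\Cat_{2m}$. For (\ref{eqn:17b}), two-term Stirling expansions $\binom{2m}{m}=\tfrac{4^m}{\sqrt{\pi m}}\bigl(1-\tfrac{1}{8m}+O(m^{-2})\bigr)$ and $\Cat_{2m}=\tfrac{16^m}{(2m+1)\sqrt{2\pi m}}\bigl(1-\tfrac{1}{16m}+O(m^{-2})\bigr)$ combine to give
\[
\frac{2^{2m-1}\binom{2m}{m}}{\Cat_{2m}} = \sqrt{2}\bigl(m+\tfrac12\bigr)\bigl(1-\tfrac{1}{16m}+O(m^{-2})\bigr) = \sqrt{2}\,m + \tfrac{\sqrt{2}}{2} - \tfrac{\sqrt{2}}{16} + o(1),
\]
and subtracting $\tfrac{3}{2}$ together with $\tfrac{\sqrt{2}}{2} - \tfrac{\sqrt{2}}{16} = \tfrac{7\sqrt{2}}{16}$ yields the claimed limit. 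The principal obstacle is the generating-function step: setting up the fibre multiplicativity, invoking the moment--cumulant/free-probability framework to express $\sum_\rho|\lambda_{2,m}\vee\rho|$ as a derivative at $t=1$, and executing the algebraic identity $1+4zE'/E=1/s$ via logarithmic differentiation. Once this is in hand, the remainder of the proof is routine Kreweras symmetry and Stirling approximation.
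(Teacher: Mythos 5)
Your proposal is correct and follows essentially the same route as the paper: the decomposition $\dH(\lambda_{2,m},\rho)=|\lambda_{2,m}|+|\rho|-2|\lambda_{2,m}\vee\rho|$, the block-multiplicative fibre structure over $\NC(m)$ (the paper's Lemma \ref{le:1015} with $i=1$), the derivative-at-$t=1$ identity of Proposition \ref{pr:101} (your expression $(E-1)(E+zE')/E$ is exactly $F+zF_1-\frac{zF_1}{1+F}$ with $1+F=E$), and the same Stirling asymptotics. The only cosmetic difference is in the final coefficient extraction, where you use $E(z)^2=C(4z)$ and logarithmic differentiation while the paper substitutes $z\mapsto z^2$ and manipulates $u=\sqrt{1+4z}$, $v=\sqrt{1-4z}$; both routes produce the term $\tfrac{1}{4\sqrt{1-16z}}$ and hence the same closed form.
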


The formulas found in Proposition \ref{prop:17} admit obvious conversions
into formulas about the meandric systems of order $2m$ which have
the top part consisting of a repetition of the double arch pattern
``$\Cap$'' arising from the doubling of $\lambda_{2,m}$. In particular,
denoting by $\widetilde{c}_{2,m}$\marginpar{$\widetilde{c}_{2,m}$}
the expected number of components of such a meandric system, one finds
that 
\begin{equation}
\lim_{m\to\infty}\left(\widetilde{c}_{2,m}-\left(1-\frac{\sqrt{2}}{2}\right)2m\right)=\frac{3}{2}-\frac{7\sqrt{2}}{16}.\label{eqn:15c}
\end{equation}
We note that the asymptotics $\widetilde{c}_{2,m}\sim(1-\frac{1}{\sqrt{2}})2m$
was also found in Equation (6.54) of \cite{di1997meander}, but without
the precise limit stated on the right-hand side of our Equation (\ref{eqn:15c}).

\subsubsection*{Paper organization}

We conclude this introduction by explaining how the paper is organized,
and by giving a few highlights on the content of the various sections.
In Sections \ref{sec:Background-on-NC(n)}, \ref{sec:Meandric-systems}
and \ref{sec:formulas-for-d_H} we introduce the framework used in
the paper, and we discuss some necessary background. Specifically,
Section \ref{sec:Background-on-NC(n)} reviews basic facts about $\left(\nc\left(n\right),\leq\right)$
and its Hasse diagram $\cH_{n}$; Section \ref{sec:Meandric-systems}
discusses the connection between these Hasse diagrams and meandric
systems; and Section \ref{sec:formulas-for-d_H} presents several
equivalent formulas for distances in $\cH_{n}$.

In Section \ref{sec:Interval-Partitions} we discuss interval partitions
and meanders with shallow top, and in Section \ref{sec:bijection},
relying on the facts from Sections \ref{sec:formulas-for-d_H} and
\ref{sec:Interval-Partitions}, we prove there is a bijection between
the set of meanders with shallow top and a certain set of finite trees.
This leads to explicit enumerative results, which are presented separately
in Section \ref{sec:Enumerative-consequences}, and include in particular
Theorem \ref{thm:main-shallow meanders} and Proposition \ref{prop:16}.
Finally, Section \ref{sec:Distance-distributions} discusses averages
of distances and presents the proofs of Theorem \ref{thm:avg distance from interval partition}
and of Propositions \ref{prop:upper bound for average distance} and
\ref{prop:17}.

\section{{\large{}Background on \boldmath{$\left(\nc\left(n\right),\protect\leq\right)$}
and Its Hasse Diagram\label{sec:Background-on-NC(n)}}}
\begin{defn}
\label{def:nc} Let $n$ be a positive integer. 
\begin{enumerate}
\item We will work with partitions of the set $\left\{ 1,\ldots,n\right\} $.
Our typical notation for such a partition is $\pi=\left\{ V_{1},\ldots,V_{k}\right\} $,
where $V_{1},\ldots,V_{k}$ (the \emph{blocks} of $\pi$) are non-empty,
pairwise disjoint sets with $\bigcup_{i=1}^{k}V_{i}=\left\{ 1,\ldots,n\right\} $.
We will occasionally use the notation ``$V\in\pi$'' to mean that
$V$ is one of the blocks of the partition $\pi$. The number of blocks
of $\pi$ is denoted by $\left|\pi\right|$.
\item We say that a partition $\pi$ of $\left\{ 1,\ldots,n\right\} $ is
\emph{non-crossing} when it is not possible to find two distinct blocks
$V,W\in\pi$ and numbers $a<b<c<d$ in $\left\{ 1,\ldots,n\right\} $
such that $a,c\in V$ and $b,d\in W$. Equivalently, $\pi$ is non-crossing
if it can be depicted in a diagram with $n$ vertices arranged on
an invisible horizontal line, so that the blocks of $\pi$ are the
connected components of a planar diagram drawn in the upper-half plane:
see Figure \ref{fig:example in NC(9)} and also e.g.~\cite[Lecture 9]{nica-speicher2006lectures}.
The set of all non-crossing partitions of $\left\{ 1,\ldots,n\right\} $
is denoted by $\nc\left(n\right)$. This is one of the many combinatorial
structures counted by Catalan numbers, namely 
\[
\left|\NC(n)\right|=\Cat_{n}:=\frac{(2n)!}{n!(n+1)!}\ \ \mbox{ (the \ensuremath{n}th Catalan number)}
\]
(see e.g.~\cite[Proposition 9.4]{nica-speicher2006lectures} and
also Remark \ref{rem:|NC(n)|=00003Dcat  and def of kr} below).
\item On $\nc\left(n\right)$ we will use the partial order given by \emph{reverse
refinement}: for $\pi,\rho$ we put 
\begin{equation}
\Bigl(\pi\leq\rho\Bigr)\ \Leftrightarrow\ \left(\begin{array}{c}
\mbox{for every \ensuremath{V\in\pi} there}\\
\mbox{exists \ensuremath{W\in\rho} such that \ensuremath{V\subseteq W}}
\end{array}\right).\label{eqn:21z}
\end{equation}
\item We denote by $0_{n}$ the partition of $\left\{ 1,\ldots,n\right\} $
into $n$ blocks of one element each, and we denote by $1_{n}$ the
partition of $\left\{ 1,\ldots,n\right\} $ into one block of $n$
elements. These are the minimum and the maximum elements in $\left(\nc\left(n\right),\le\right)$,
that is, one has $0_{n}\leq\pi\leq1_{n}$ for every $\pi\in\nc\left(n\right)$.
\item The partially ordered set $\left(\nc\left(n\right),\le\right)$ turns
out to be a \emph{lattice}, which means that every $\pi,\rho\in\nc\left(n\right)$
have a least common upper bound $\pi\vee\rho$ and a greatest common
lower bound $\pi\wedge\rho$. The partitions $\pi\vee\rho$ and $\pi\wedge\rho$
are called the \emph{join} and \emph{meet}, respectively, of $\pi$
and $\rho$. It is easily verified that $\pi\wedge\rho$ can be explicitly
described as 
\begin{equation}
\pi\wedge\rho:=\left\{ V\cap W\,\middle|\,V\in\pi,W\in\rho,V\cap W\neq\emptyset\right\} ,\label{eqn:22a}
\end{equation}
but there is no such simple explicit formula for $\pi\vee\rho$. (For
the proof that $\pi\vee\rho$ does indeed exist, see e.g.~\cite[Proposition 3.3.1]{stanley2011enumerative}.) 
\end{enumerate}
\end{defn}

\begin{figure}
\includegraphics[viewport=-60bp 300bp 100bp 350bp]{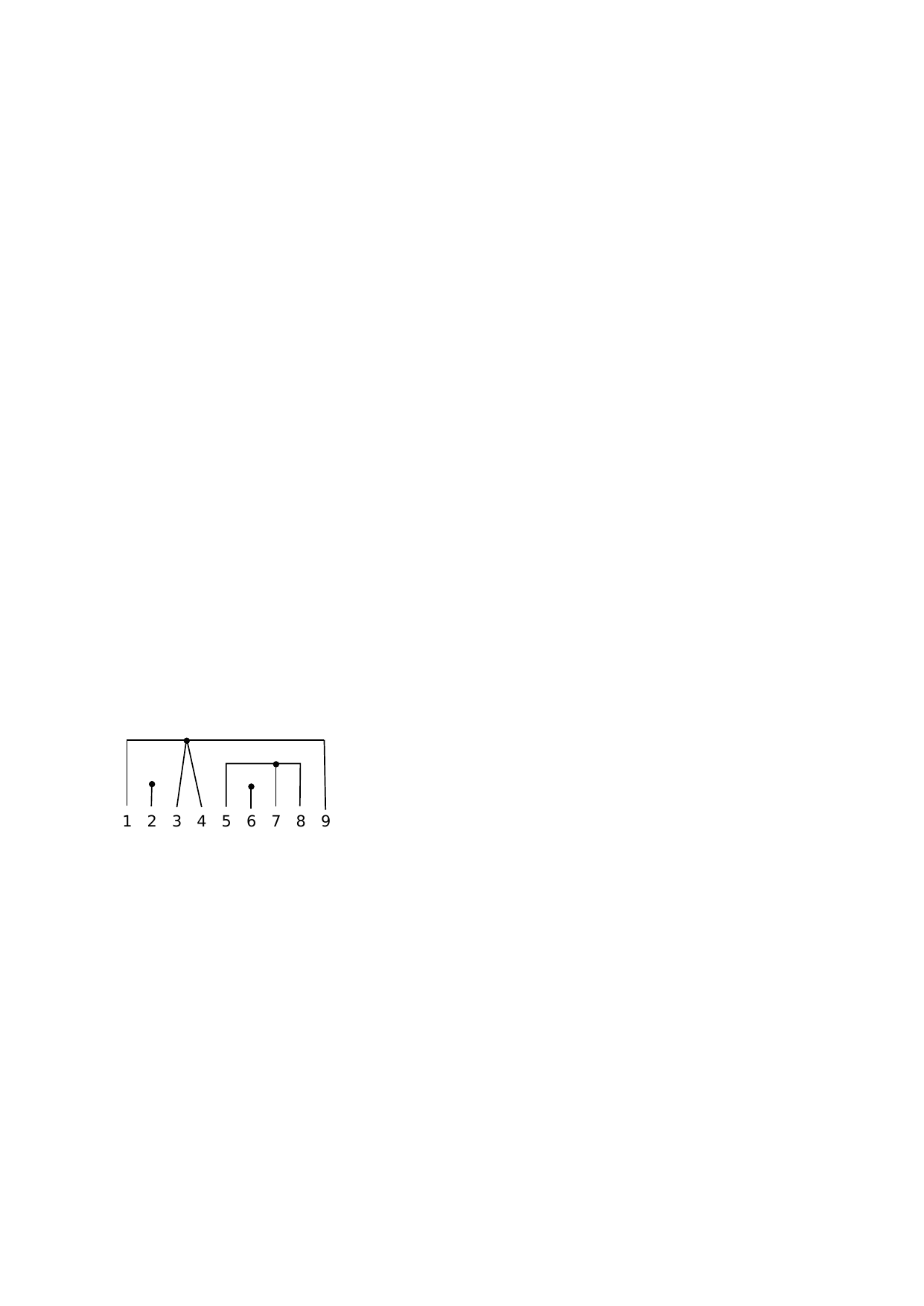}\caption{\label{fig:example in NC(9)}A non-crossing geometric realization
of the non-crossing partition $\left\{ \left\{ 1,3,4,9\right\} ,\left\{ 2\right\} ,\left\{ 5,7,8\right\} ,\left\{ 6\right\} \right\} \in\nc\left(9\right)$}
\end{figure}

\begin{defn}
\label{def:H_n}Let $n$ be a positive integer. 
\begin{enumerate}
\item Let $\pi,\rho$ be in $\nc\left(n\right)$. We will say that $\rho$
\emph{covers} $\pi$ to mean that $\pi\leq\rho$, $\pi\neq\rho$,
and there exists no $\theta\in\nc\left(n\right)\setminus\left\{ \pi,\rho\right\} $
such that $\pi\leq\theta\leq\rho$. 
\item The Hasse diagram of $\left(\nc\left(n\right),\le\right)$ is the
undirected graph \marginpar{$\cH_{n}$}$\cH_{n}$ described as follows:
the vertex set of $\cH_{n}$ is $\nc\left(n\right)$, and the edge
set of $\cH_{n}$ consists of subsets $\{\pi_{1},\pi_{2}\}\subseteq\nc\left(n\right)$
where one of $\pi_{1},\pi_{2}$ covers the other. This is illustrated
in Figure \ref{fig:H4}. 
\end{enumerate}
\end{defn}

\begin{figure}[h]
\begin{minipage}[t]{0.4\columnwidth}%
\[
\xymatrix{ &  &  &  & 1234\ar@{-}[dll]\ar@{-}[dl]\ar@{-}[d]\\
~ & ~ & 12,34\ar@{-}[d]\ar@{-}[drrrrr] & 123,4\ar@{-}[d] & 124,3\ar@{-}[dll]\ar@{-}[d]\ar@{-}[drr] & 134,2\ar@{-}[drr]\ar@{-}[ul] & 14,23\ar@{-}[ull]\ar@{-}[dl] & 1,234\ar@{-}[ulll]\ar@{-}[dll] &  &  & ~\\
 &  & 12,3,4\ar@{-}[drr]\ar@{-}[ur] & 13,2,4\ar@{-}[urr] & 14,2,3\ar@{-}[urr]\ar@{-}[ur] & 1,23,4\ar@{-}[ull]\ar@{-}[dl] & 1,24,3\ar@{-}[dll]\ar@{-}[ur] & 1,2,34\ar@{-}[dlll]\ar@{-}[u] &  &  & \hspace{1em}\\
 &  &  &  & 1,2,3,4\ar@{-}[ull]\ar@{-}[ul]\ar@{-}[u]
}
\]
\end{minipage}\caption{\label{fig:H4}The Hasse diagram $\cH_{4}$ of the lattice $\nc\left(4\right)$.
Note that all partitions of $\left\{ 1,2,3,4\right\} $ are non-crossing
except for $\left\{ \left\{ 1,3\right\} ,\left\{ 2,4\right\} \right\} $.}
\end{figure}

\begin{rem}
\label{rem:covers}It is not hard to see that for $\pi,\rho\in\nc\left(n\right)$,
one has 
\[
\Bigl(\,\mbox{\ensuremath{\rho} covers \ensuremath{\pi}}\,\Bigr)\ \Leftrightarrow\ \Bigl(\,\mbox{\ensuremath{\rho\geq\pi} and \ensuremath{|\rho|=|\pi|-1}}\,\Bigr).
\]
So if one draws the vertices of $\cH_{n}$ (that is, the partitions
in $NC(n)$) arranged on horizontal levels according to number of
blocks, then every edge of $\cH_{n}$ will connect two vertices situated
on adjacent levels -- see Figure \ref{fig:H4}. More precisely, an
edge connects a partition $\pi$ on level $k$ to a partition $\rho$
on level $k+1$ precisely when $\pi$ can be obtained from $\rho$
by taking a block $W\in\rho$ and breaking it into two pieces, in
a non-crossing way. 
\end{rem}

As announced in the Introduction, the main object of concern for the
present paper is the structure of \emph{distances} in the Hasse diagram
$\cH_{n}$.

\begin{notation} \label{not:d_H}For every positive integer $n$
and for every $\pi,\rho\in\nc\left(n\right)$ we will use the notation
\marginpar{$d_{H}\left(\pi,\rho\right)$}$\dH(\pi,\rho)$ for the
distance between $\pi$ and $\rho$ in the graph $\cH_{n}$. That
is, $\dH(\pi,\rho)$ is the length of the shortest path in $\cH_{n}$
which connects $\pi$ with $\rho$. \end{notation}

\vspace{6pt}

The next proposition records some easy observations about $\dH$: 
\begin{prop}
\label{prop:diameter of H_n}Let $n$ be a positive integer. 
\begin{enumerate}
\item The diameter of $\cH_{n}$ is $n-1$. 
\item If $\pi,\rho\in\nc\left(n\right)$ are such that $\pi\leq\rho$, then
$\dH(\pi,\rho)=|\pi|\,-\,|\rho|$. 
\item If $\pi,\rho\in\nc(n)$ are such that $\dH(\pi,\rho)=n-1$, then it
follows that $\pi\wedge\rho=0_{n}$ and $\pi\vee\rho=1_{n}$, and
also that $|\pi|+|\rho|=n+1$. 
\end{enumerate}
\end{prop}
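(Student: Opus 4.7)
The strategy is to prove part (2) first as the main technical step; parts (1) and (3) will then follow from part (2) by short triangle-inequality arguments, using the auxiliary vertices $0_{n}$, $1_{n}$, $\pi\vee\rho$ and $\pi\wedge\rho$.

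For part (2), the lower bound $\dH(\pi,\rho)\geq|\pi|-|\rho|$ is immediate from Remark \ref{rem:covers}: every edge of $\cH_{n}$ changes the block-count by exactly $1$, so any walk of length $\ell$ from $\pi$ to $\rho$ satisfies $\ell\geq\bigl||\pi|-|\rho|\bigr|$. For the matching upper bound I would exhibit a saturated chain $\pi=\pi_{0}<\pi_{1}<\cdots<\pi_{k}=\rho$ in $(\nc(n),\leq)$ of length $k=|\pi|-|\rho|$, which directly supplies a path of that length in $\cH_{n}$. The inductive step needs, whenever $\pi_{i}\neq\rho$, a pair of blocks of $\pi_{i}$ whose union lies inside a single block of $\rho$ and whose merger remains non-crossing. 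To produce such a pair I would pick a block $W\in\rho$ that contains at least two blocks of $\pi_{i}$, list $W=\{w_{1}<\cdots<w_{r}\}$, and locate consecutive elements $w_{j},w_{j+1}$ lying in different blocks of $\pi_{i}$; merging those two blocks cannot create a crossing, because the two merged blocks contain two consecutive elements of $W$ and $\pi_{i}$ was already non-crossing. The resulting partition is automatically still $\leq\rho$.

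For part (1), applying part (2) to $0_{n}\leq 1_{n}$ gives $\dH(0_{n},1_{n})=n-1$, so $\mathrm{diam}(\cH_{n})\geq n-1$. For the matching upper bound, for arbitrary $\pi,\rho\in\nc(n)$ the triangle inequalities via $1_{n}$ and via $0_{n}$ yield
\[
\dH(\pi,\rho)\leq(|\pi|-1)+(|\rho|-1)=|\pi|+|\rho|-2
\]
and
\[
\dH(\pi,\rho)\leq(n-|\pi|)+(n-|\rho|)=2n-|\pi|-|\rho|,
\]
respectively; the smaller of these two bounds is always at most $n-1$ (the two are equal, both to $n-1$, precisely when $|\pi|+|\rho|=n+1$, and the relevant bound is strictly less than $n-1$ otherwise).

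Part (3) is then extracted by running the above bounds backwards. The equality $\dH(\pi,\rho)=n-1$ forces both $|\pi|+|\rho|-2\geq n-1$ and $2n-|\pi|-|\rho|\geq n-1$, which together give $|\pi|+|\rho|=n+1$. Using instead the triangle inequalities through $\pi\vee\rho$ and through $\pi\wedge\rho$ (the mechanism underlying inequality \eqref{eqn:13a}) I get $\dH(\pi,\rho)\leq|\pi|+|\rho|-2|\pi\vee\rho|$ and $\dH(\pi,\rho)\leq 2|\pi\wedge\rho|-|\pi|-|\rho|$; substituting $|\pi|+|\rho|=n+1$ and $\dH(\pi,\rho)=n-1$ forces $|\pi\vee\rho|\leq 1$ and $|\pi\wedge\rho|\geq n$, i.e.\ $\pi\vee\rho=1_{n}$ and $\pi\wedge\rho=0_{n}$. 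The only genuine obstacle in the whole plan is the inductive merge step inside part (2), where one must verify carefully that a non-crossing, $\rho$-admissible block-merger is always available; everything else is routine bookkeeping with triangle inequalities.
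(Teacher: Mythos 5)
Your proposal is correct and follows essentially the same route as the paper: gradedness of $\cH_{n}$ plus a saturated increasing chain for part (2), the equality $\dH(0_{n},1_{n})=n-1$ plus triangle inequalities for part (1), and running those inequalities backwards for part (3). The only genuine differences are minor: you spell out the block-merging step that the paper compresses into ``take a saturated increasing chain'' (your merge of the two blocks meeting consecutive elements of $W$ does preserve non-crossingness, and is in effect the one-doubleton special case treated later in Proposition \ref{prop:d_H for interval partitions}), and for the diameter upper bound you take the minimum of the routes through $0_{n}$ and $1_{n}$ where the paper averages the routes through $\pi\wedge\rho$ and $\pi\vee\rho$ --- both yield $n-1$, and your variant has the small advantage of not needing the lattice operations until part (3).
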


\begin{proof}
In $(2)$, the inequality ``$\geq$'' is an immediate consequence
of the fact that every edge in $\cH_{n}$ connects partitions on adjacent
levels of the graph. In order to prove ``$\leq$'', we take a saturated
increasing chain in $\nc(n)$ which goes from $\pi$ to $\rho$ and
we observe that it gives a path of length $|\pi|-|\rho|$ which connects
the two partitions.

For $\left(1\right)$, note that $d_{H}\left(0_{n},1_{n}\right)=n-1$
so the diameter is at least $n-1$. On the other hand, for every $\pi,\rho\in\nc(n)$
we have $\pi\wedge\rho\leq\pi$ and $\pi\leq\pi\vee\rho$, so part
$(2)$ assures us that 
\[
\dH(\pi\wedge\rho,\pi)=|\pi\wedge\rho|-|\pi|\mbox{ and }\dH(\pi,\pi\vee\rho)=|\pi|-|\pi\vee\rho|.
\]
Similar inequalities hold with $\rho$ featured in the place of $\pi$.
But then we can write 
\begin{eqnarray}
\dH\left(\pi,\rho\right) & \le & \frac{\dH\left(\pi,\pi\wedge\rho\right)+\dH\left(\pi\wedge\rho,\rho\right)+\dH\left(\pi,\pi\lor\rho\right)+\dH\left(\pi\lor\rho,\rho\right)}{2}\nonumber \\
 & = & \left|\pi\wedge\rho\right|-\left|\pi\vee\rho\right|\le n-1,\label{eq:d_H le n-1}
\end{eqnarray}
where the latter inequality simply holds because $|\pi\wedge\rho|\leq n$
and $|\pi\vee\rho|\geq1$.

To show $\left(3\right)$, note that from \eqref{eq:d_H le n-1} it
is clear that the equality $\dH(\pi,\rho)=n-1$ forces the equalities
$|\pi\wedge\rho|=n$, $|\pi\vee\rho|=1$, i.e.~$\pi\wedge\rho=0_{n}$
and $\pi\vee\rho=1_{n}$. Finally, if $\pi,\rho$ are such that $\dH(\pi,\rho)=n-1$,
then by writing 
\[
n-1=\dH(\pi,\rho)\leq\dH(\pi,1_{n})+\dH(1_{n},\rho)=|\pi|+|\rho|-2
\]
we obtain that $|\pi|+|\rho|\geq n+1$, and by writing 
\[
n-1=\dH(\pi,\rho)\leq\dH(\pi,0_{n})+\dH(0_{n},\rho)=2n-\bigl(\,|\pi|+|\rho|\,\bigr)
\]
we obtain that $|\pi|+|\rho|\leq n+1$. Hence the equality $\dH(\pi,\rho)=n-1$
implies that $|\pi|+|\rho|=n+1$. 
\end{proof}
\begin{rem}
\label{rem:28}The combination of necessary conditions $\pi\wedge\rho=0_{n},\pi\vee\rho=1_{n}$
and $|\pi|+|\rho|=n+1$ found in Proposition \ref{prop:diameter of H_n}(3)
is not sufficient to ensure that $\dH(\pi,\rho)=n-1$. For example,
the following piece (subgraph) from $\cH_{6}$ shows two non-crossing
partitions at distance $3$ which satisfy these three properties:
\[
\xymatrix{1346,2,5\ar@{-}[d] & 16,25,34\ar@{-}[d]\\
13,2,46,5 & 16,2,34,5\ar@{-}[ul]
}
\]
\end{rem}

\subsubsection*{Embedding $\nc\left(n\right)$ in $\cS_{n}$ and the Kreweras complementation
map}
\begin{defn}
\label{def:embedding in S_n}Let $n$ be a positive integer and let
$\cS_{n}$ be the group of permutations of $\left\{ 1,\ldots,n\right\} $.
For every $\pi\in\nc(n)$ we construct an associated permutation \marginpar{$P_{\pi}$}$P_{\pi}\in\cS_{n}$
as follows: the blocks of $\pi$ become orbits of $P_{\pi}$, and
$P_{\pi}$ performs an increasing cycle on every such block; that
is, if $V=\{i_{1},i_{2},\ldots,i_{k}\}\in\pi$ with $i_{1}<i_{2}<\cdots<i_{k}$,
then we have $P_{\pi}(i_{1})=i_{2},\ldots,P_{\pi}(i_{k-1})=i_{k},\,P_{\pi}(i_{k})=i_{1}$.
This description includes the fact that if $\left\{ i\right\} =V\in\pi$
is a singleton, then $P_{\pi}\left(i\right)=i$. 
\end{defn}

The map $\nc(n)\ni\pi\mapsto P_{\pi}\in\cS_{n}$ is obviously injective.
It is also clear that $\#(P_{\pi})=\left|\pi\right|$ for every $\pi\in\nc\left(n\right)$,
where for $\sigma\in\cS_{n}$ we denote by $\#\left(\sigma\right)$
the number of cycles in $\sigma$. This embedding was introduced in
\cite{biane1997some}, and it has additional nice properties, some
of which are mentioned in Section \ref{sec:formulas-for-d_H}. 
\begin{rem}
\label{rem:kreweras}The Hasse diagram $\cH_{n}$ displays top-down
symmetry. This is explained by the existence of a natural bijection
$\Kr_{n}\colon\nc\left(n\right)\to\nc\left(n\right)$\marginpar{$\Kr$}
which reverses partial refinement order: for $\pi,\rho\in\nc\left(n\right)$
one has 
\[
(\pi\leq\rho)\ \Leftrightarrow\ \left(\Kr_{n}(\rho)\leq\Kr_{n}(\pi)\right).
\]
The bijection $\Kr_{n}$ was introduced by Kreweras \cite{kreweras1972partitions}
and is called the \emph{Kreweras complementation map}. It can be defined
in terms of permutations: 
\begin{equation}
P_{\Kr_{n}(\pi)}=P_{\pi}^{-1}P_{1_{n}},\ \ \forall\,\pi\in NC(n).\label{eqn:Kreweras via S_n}
\end{equation}
Note that in \eqref{eqn:Kreweras via S_n}, $P_{1_{n}}$ is the long
cycle $\left(1~2~3~\ldots~n\right)$. For a discussion about this
and a proof that for $\pi\in\nc\left(n\right)$, $P_{\pi}^{-1}P_{1_{n}}$
is equal indeed to $P_{\lambda}$ for some $\lambda\in\nc\left(n\right)$,
consult e.g.~\cite[Exercises 18.25 and 18.26]{nica-speicher2006lectures}.
For an alternative definition of the map $\Kr$ see e.g.~\cite[Pages 147-148]{nica-speicher2006lectures}
and also Remark \ref{rem:|NC(n)|=00003Dcat  and def of kr} below.
The fact that $\Kr_{n}$ is indeed a top-down symmetry follows from
\eqref{eqn:Kreweras via S_n}, together with Remark \ref{rem:|NC(n)|=00003Dcat  and def of kr}
and Theorem \ref{thm:equivalences for d_H} below. 
\end{rem}

\section{{\large{}Meandric Systems and Their Relation to \boldmath{$\nc\left(n\right)$}
\label{sec:Meandric-systems}}}

Let $L$ be a fixed oriented line in the Euclidean plane $\mathbb{E}:=\mathbb{R}^{2}$
with $2n$ marked points $p_{1},\ldots,p_{2n}$. Consider a non-intersecting
(not necessarily connected) closed curve $C\subseteq\mathbb{E}$,
which transversely intersects the line $L$ at precisely the points
$p_{1},\ldots,p_{2n}$. Say that two such curves $C_{1}$ and $C_{2}$
are equivalent if they can be deformed into each other by an isotopy
of $\mathbb{E}$ which fixes the line $L$ pointwise. The equivalence
class $M=\left[C\right]$ is called a \emph{meandric system} of order
$n$, or simply \emph{a meander} of order $n$ if $C$ is connected.
We denote the set of all meandric systems of order $n$ by $\fM\left(n\right)$\marginpar{$\fM\left(n\right)$}.

The notoriously difficult problem of enumerating meanders was first
introduced by Lando and Zvonkin \cite{Lando1992meanders}. It emerges
in a variety of different areas inside and outside Mathematics, see
for example \cite{arnol1988branched,di1996meanders,lando2004graphs}.
Figure \ref{fig:meanders of order 3} shows all meanders of order
$3$ and Figure \ref{fig:meandric system of order 4} illustrates
a meandric system of order $4$ with two connected components.

\begin{figure}
\centering{}\includegraphics[viewport=150bp 400bp 450bp 600bp,scale=0.7]{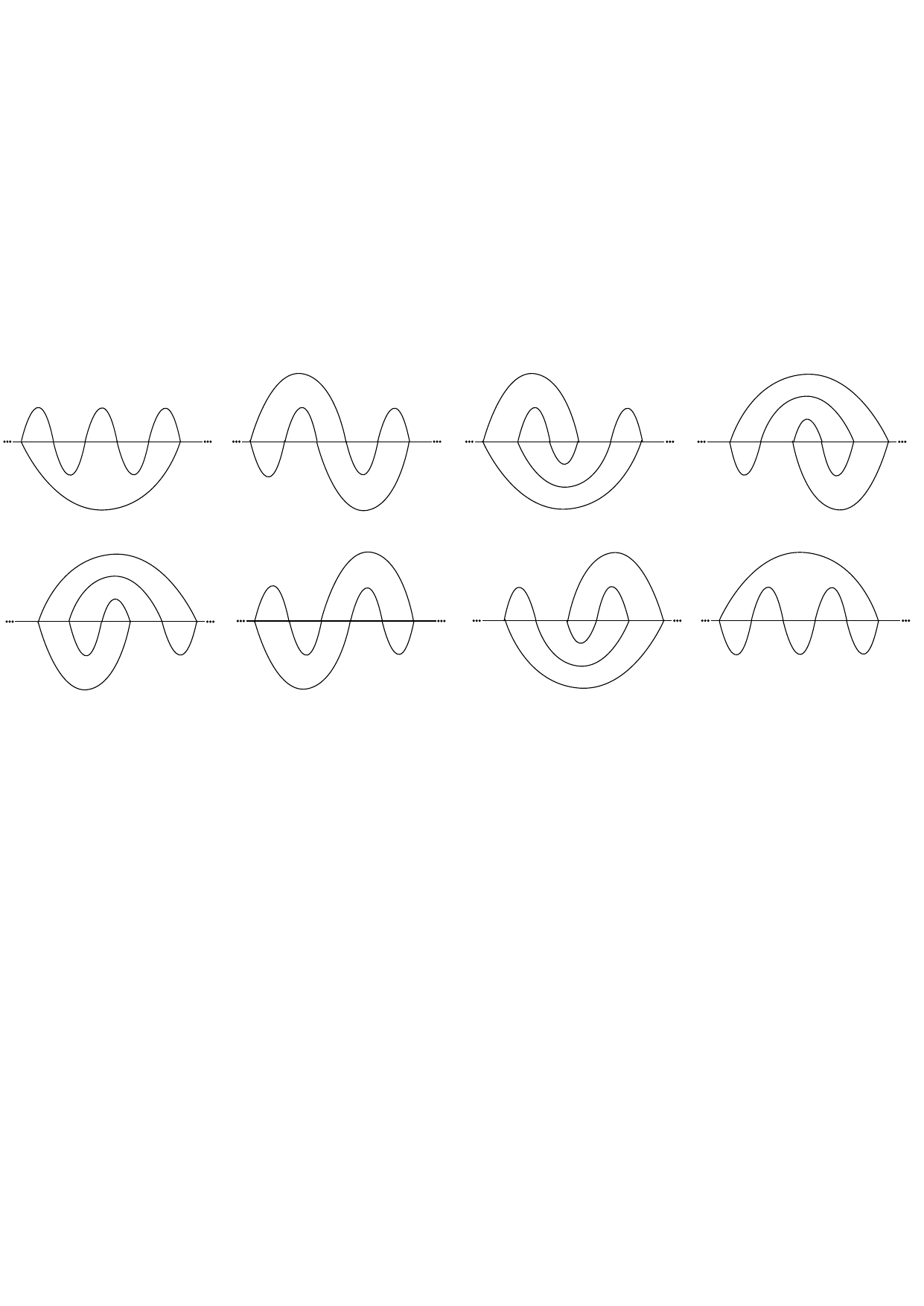}\caption{\label{fig:meanders of order 3}All meanders of order $3$}
\end{figure}

\begin{figure}[h]
\centering{}\includegraphics[viewport=200bp 510bp 300bp 620bp]{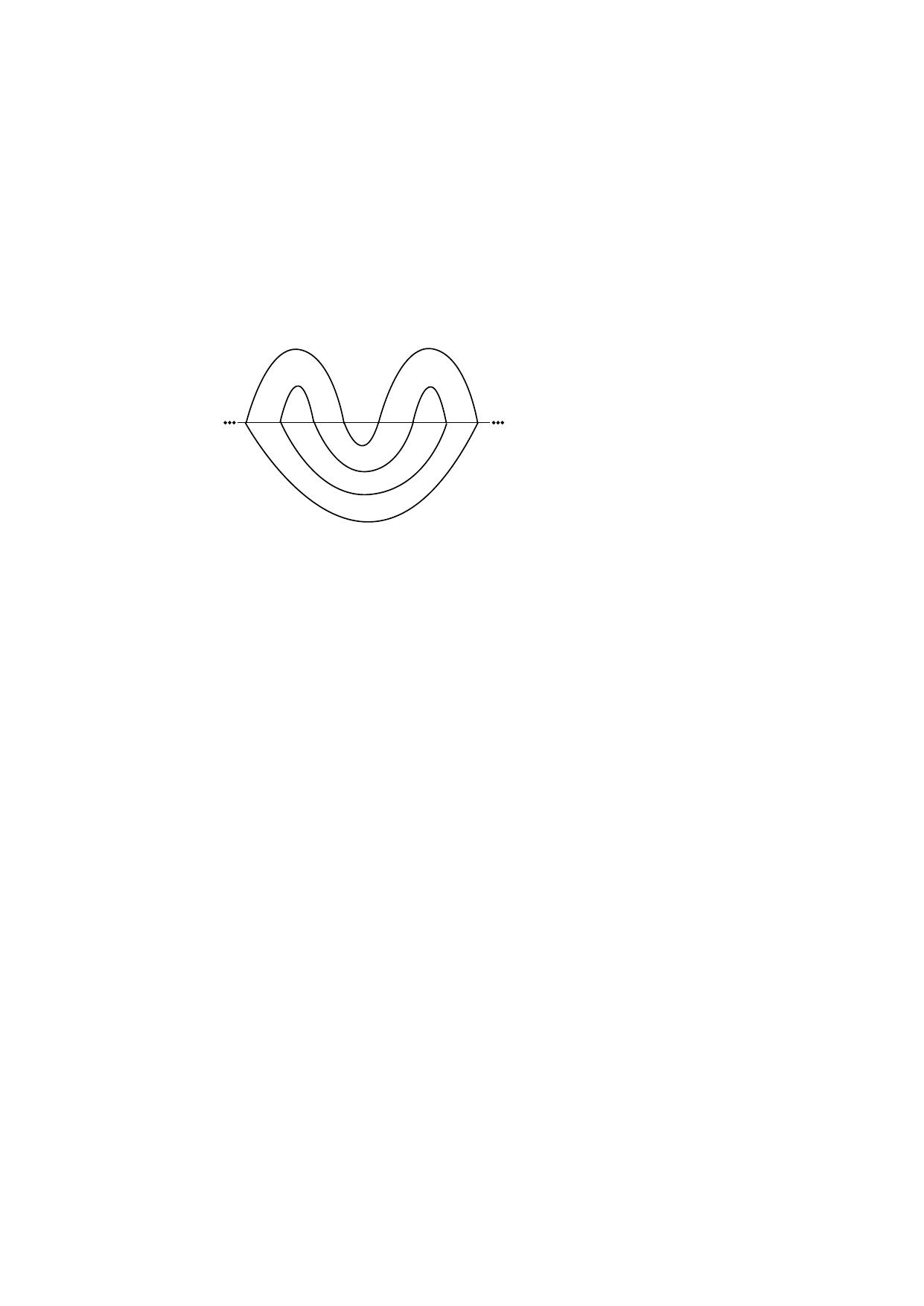}\caption{\label{fig:meandric system of order 4}A meandric system of order
$4$ with two connected components}
\end{figure}

Consider the two open half-planes defined by $L$. The intersection
of a meander with each half plane is a collection of $n$ disjoint,
self-avoiding arcs, each of which connects two of the points $p_{1},\ldots,p_{2n}$.
Such a configuration is called a \emph{non-crossing pairing (or arch-diagram)}
\emph{of order $n$}. The set of non-crossing pairings of order $n$,
considered up to homeomorphism as above, is denoted \marginpar{\emph{$\ncp\left(n\right)$}}
\emph{$\ncp\left(n\right)$}, and\emph{ }it is standard that the cardinality
of $\mathrm{\nc P}\left(n\right)$ is $\Cat_{n}$, the $n$-th Catalan\footnote{Of course, $\ncp\left(n\right)$ is also a subset of $\nc\left(2n\right)$
consisting of non-crossing partitions with all blocks having size
$2$, but we do not use this point of view here.} number. Evidently, there is a bijection between pairs of non-crossing
pairings of order $n$ and meandric systems of order $n$, given by
\begin{equation}
\ncp\left(n\right)^{2}\ni\left(\pi,\rho\right)\mapsto M\left(\pi,\rho\right)\in\fM\left(n\right),\label{eq:(pi,rho) to M(pi,rho)}
\end{equation}
where, by convention, the line $L$ is assumed to be horizontal, $\pi$
corresponds to the pairing in the upper half-plane, and $\rho$ in
the lower half-plane. The points $p_{1},\ldots,p_{2n}$ appear from
left to right. For example, in the meandric system in Figure \ref{fig:meandric system of order 4},
$\pi$ is the pairing $\left\{ \left\{ p_{1},p_{4}\right\} ,\left\{ p_{2},p_{3}\right\} ,\left\{ p_{5},p_{8}\right\} ,\left\{ p_{6},p_{7}\right\} \right\} \in\ncp\left(4\right)$,
while $\rho$ is $\left\{ \left\{ p_{1},p_{8}\right\} ,\left\{ p_{2},p_{7}\right\} ,\left\{ p_{3},p_{6}\right\} ,\left\{ p_{4},p_{5}\right\} \right\} \in\ncp\left(4\right)$.

The number of curves, or components, of the meandric system $M\left(\pi,\rho\right)$
is denoted \marginpar{$\#M\left(\pi,\rho\right)$}$\#M\left(\pi,\sigma\right)$,
and satisfies $1\le\#M\left(\pi,\sigma\right)\le n$. Of course, $\#M\left(\pi,\sigma\right)=1$
if and only if $M\left(\pi,\rho\right)$ is a meander. The total number
of meandric systems of order $n$ is, therefore, $\Cat_{n}^{~2}$.
However, very little is known about the distribution of $\#M\left(\pi,\sigma\right)$
when $n$ is large. Some numerics can be found in \cite{di1997meander}.

\subsubsection*{A bijection between non-crossing partitions and non-crossing pairings}

There is a well-known natural bijection between $\nc\left(n\right)$,
the set of non-crossing partitions of order $n$, and $\ncp\left(n\right)$,
the set of non-crossing pairings of order $n$. The most intuitive
way to describe this bijection is geometrical: given a non-crossing
partition $\pi\in\nc\left(n\right)$ drawn as in Figure \ref{fig:example in NC(9)},
take an $\varepsilon$-neighborhood $N_{\varepsilon}\left(\pi\right)$
of the graph describing $\pi$ for small enough $\varepsilon>0$,
and consider $\partial\left(\pi\right)$, defined to be the intersection
of the boundary of $N_{\varepsilon}\left(\pi\right)$ with the upper
half-plane. Observe that $\partial\pi$ is a non-crossing pairing
of order $n$, with $2n$ points on the horizontal line, one on each
side of each of the original $n$ points. The passage from $\pi\in NC(n)$
to $\partial\left(\pi\right)\in\ncp\left(n\right)$ is commonly known
as the ``doubling'' or ``fattening'' of the partition $\pi$.
This is illustrated in Figure \ref{fig:from nc to ncp}.

\begin{figure}
\centering{}\includegraphics[viewport=120bp 310bp 150bp 370bp,scale=2]{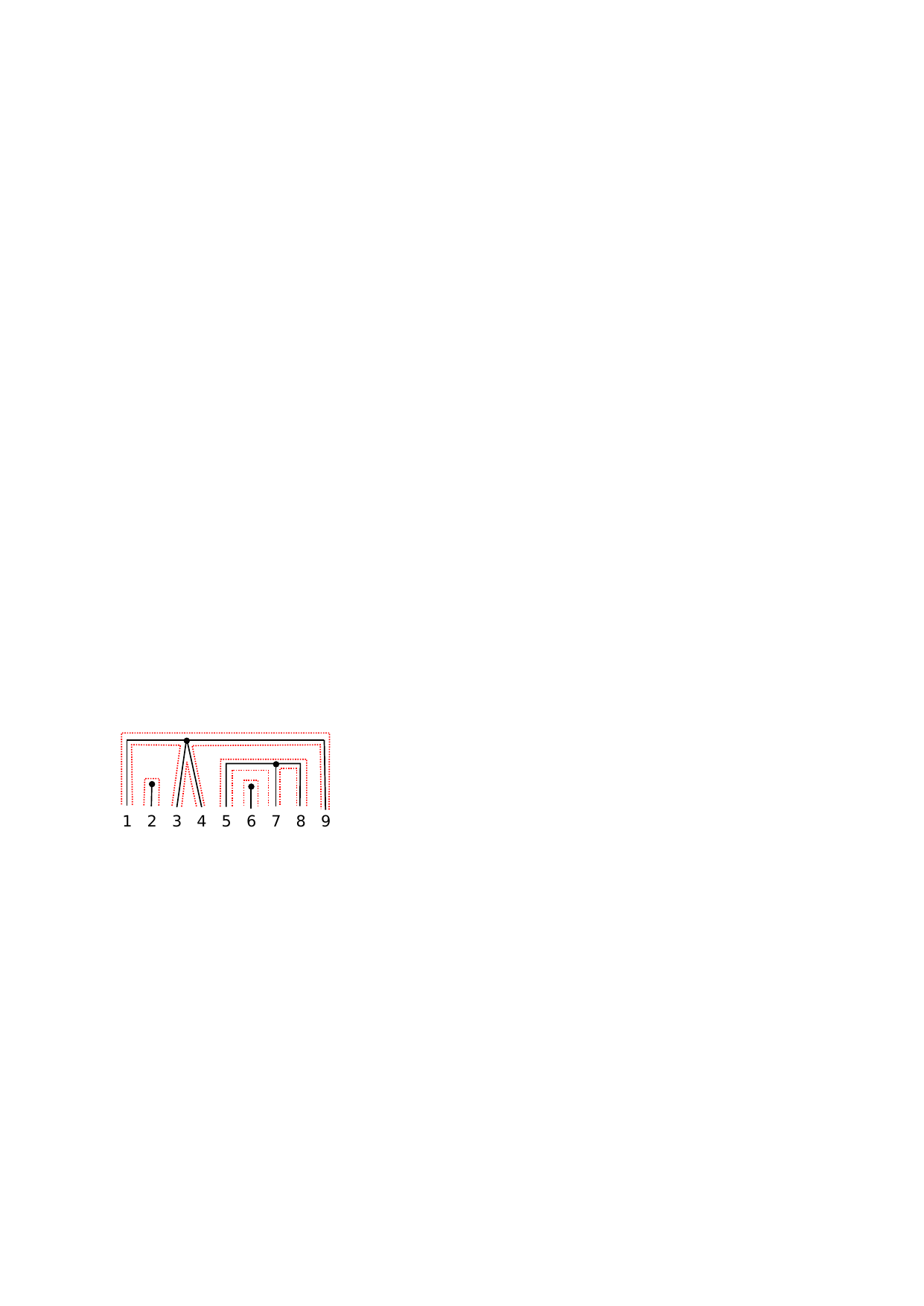}\caption{\label{fig:from nc to ncp}The non-crossing pairing (dashed lines)
in $\ncp\left(9\right)$ corresponding to the non-crossing partition
$\left\{ \left\{ 1,3,4,9\right\} ,\left\{ 2\right\} ,\left\{ 5,7,8\right\} ,\left\{ 6\right\} \right\} \in\nc\left(9\right)$}
\end{figure}

Conversely, given a non-crossing pairing $\rho\in\ncp\left(n\right)$
consisting of $n$ disjoint curves with endpoints $p_{1},\ldots,p_{2n}$
as above, mark additional $n$ points $q_{1},\ldots,q_{n}$ so that
$q_{i}$ lies on the interval of $L$ between $p_{2i-1}$ and $p_{2i}$.
The $n$ curves of $\rho$ cut the upper-half plane into $n+1$ connected
components. The connected components containing $q$-points define
a non-crossing partition $c\left(\rho\right)\in\nc\left(n\right)$,
where every block consists (of the indices) of the $q$-points lying
inside one of these components. This is illustrated in Figure \ref{fig:from ncp to nc}.
The fact that $c\left(\rho\right)$ is non-crossing is immediate.
The fact that $\rho\mapsto c\left(\rho\right)$ is the inverse map
to $\pi\mapsto\partial\left(\pi\right)$ follows easily from the observation
that if $p_{i}$ is connected to $p_{j}$ then one of $i$ and $j$
is odd and the other is even. 
\begin{rem}
\label{rem:|NC(n)|=00003Dcat  and def of kr} 
\begin{enumerate}
\item The bijection between $\nc\left(n\right)$ and $\ncp\left(n\right)$
yields a proof of the fact mentioned in Definition \ref{def:nc} that
$\left|\nc\left(n\right)\right|=\Cat_{n}$. 
\item In the description of $c\left(\rho\right)$, if the point $q_{i}$
is taken to lie in the interval $\left(p_{2i},p_{2i+1}\right)$, instead
of in the interval $\left(p_{2i-1},p_{2i}\right)$ (with $q_{n}$
lying to the right of $p_{2n}$), then the resulting non-crossing
partition $c'\left(\rho\right)$ is precisely the image of $c\left(\rho\right)$
under the Kreweras complementation map: $c'\left(\rho\right)=\Kr\left(c\left(\rho\right)\right)$.
Because the $n$ curves of $\rho$ cut the upper half-space into $n+1$
connected components, it follows that $\left|\pi\right|+\left|\Kr\left(\pi\right)\right|=n+1$
for every $\pi\in\nc\left(n\right)$. It is also easy to infer from
this graphical definition of $\Kr$ why \eqref{eqn:Kreweras via S_n}
holds. 
\end{enumerate}
\end{rem}

\begin{figure}
\centering{}\includegraphics[viewport=150bp 400bp 500bp 580bp,scale=0.7]{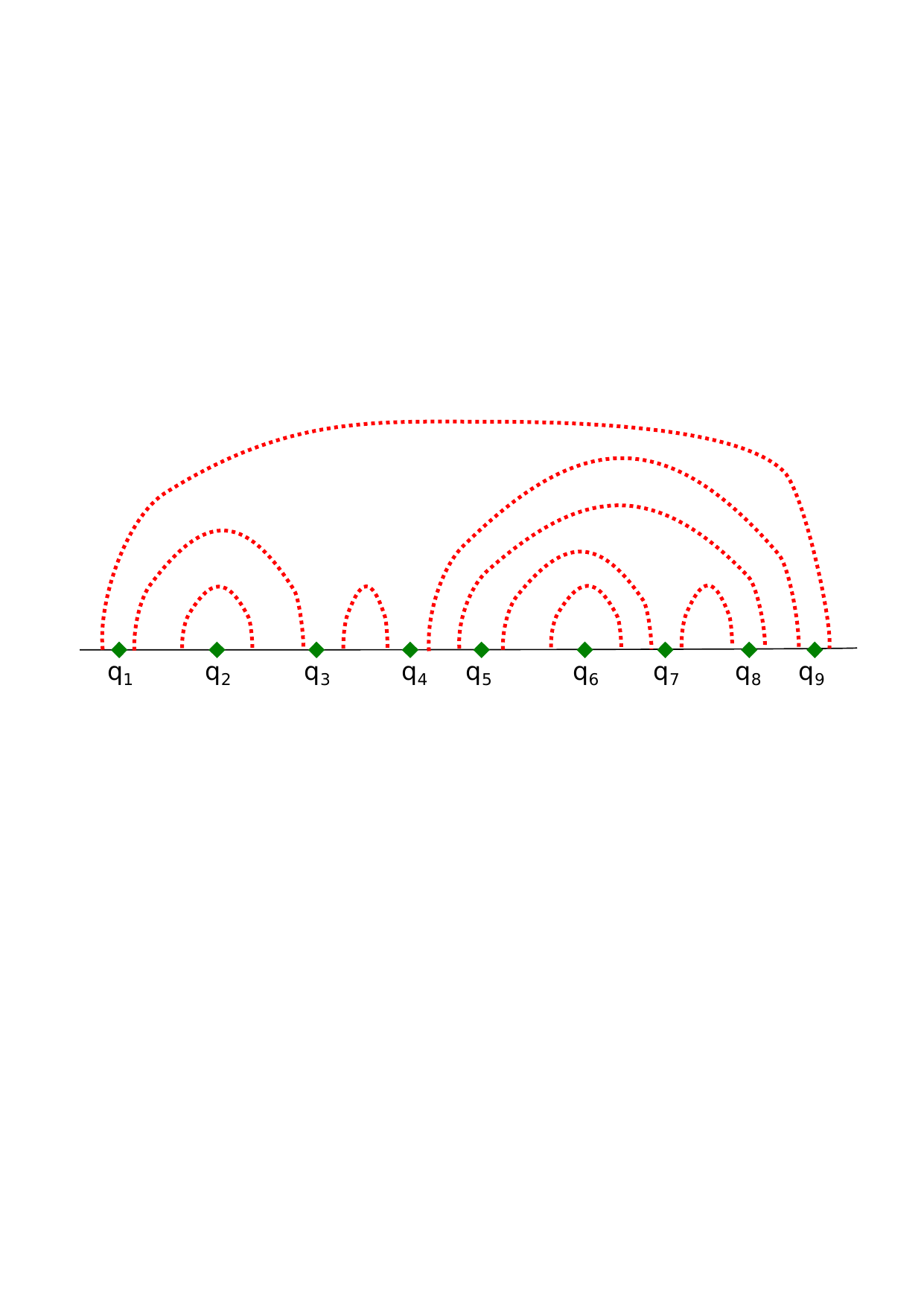}\caption{\label{fig:from ncp to nc}How a non-crossing pairing (dashed lines)
in $\ncp\left(9\right)$ determines a non-crossing partition of the
same order. In this example the resulting non-crossing partition is
$\left\{ \left\{ 1,3,4,9\right\} ,\left\{ 2\right\} ,\left\{ 5,7,8\right\} ,\left\{ 6\right\} \right\} \in\nc\left(9\right)$.}
\end{figure}

\begin{defn}
\label{def:M(pi,rho)}For $\pi,\rho\in\nc\left(n\right)$, we denote
by \marginpar{$M\left(\pi,\rho\right)$} 
\[
M\left(\pi,\rho\right):=M\left(\partial\left(\pi\right),\partial\left(\rho\right)\right)
\]
the meandric system corresponding to $\partial\left(\pi\right),\partial\left(\rho\right)\in\ncp\left(n\right)$.
As before, $\#M\left(\pi,\rho\right)$ denotes the number of components
of the meandric system. 
\end{defn}

Note the abuse of notation here: we use the same notation for the
bijection $\ncp\left(n\right)^{2}\to\fM\left(n\right)$ and for the
bijection $\nc\left(n\right)^{2}\to\fM\left(n\right)$.

\section{{\large{}Equivalent Expressions for \boldmath{$\dH(\pi,\rho)$}
\label{sec:formulas-for-d_H}}}

In this section we review some equivalent definitions, expressions
and formulas for the quantity $d_{H}\left(\pi,\rho\right)$, as well
as different criteria for $M\left(\pi,\rho\right)$ being a meander.
All these equivalences can be found in the literature, although to
the best of our knowledge, not in a single reference. We state the
equivalences, refer to proofs in the literature and describe a few
of the arguments.

The following two definitions are central to some of the equivalences,
and the first one plays a crucial role in the subsequent sections: 
\begin{defn}
\label{def:Gamma(pi,rho)}Given $\pi,\rho\in\nc\left(n\right)$, let
$\Gamma\left(\pi,\rho\right)$\marginpar{$\Gamma\left(\pi,\rho\right)$}
denote the planar, edge-labeled and vertex-colored graph obtained
by drawing $\pi$ in the upper half-plane as in Figure \ref{fig:example in NC(9)},
and $\rho$ in the lower half plane, with the same $n$ points labeled
$1,\ldots,n$. More precisely, mark $n$ points along an invisible
horizontal line and label them, from left to right, by $1,\ldots,n$,
draw $\pi$ in the upper half-plane with a vertex for every block
$V\in\pi$, and $\rho$ in the lower half-plane with a vertex for
every block $W\in\rho$. These are the $\left|\pi\right|+\left|\rho\right|$
vertices of the graph. Its $n$ edges consist of the $n$ lines incident
with the marked points, and are labeled accordingly. That is, for
every $i=1,\ldots,n$, there is an edge labeled $i$ connecting the
block in $\pi$ containing $i$ with the block in $\rho$ containing
$i$. By convention, the vertices of $\pi$ are black and the vertices
of $\rho$ are white\footnote{This convention will be useful only in subsequent sections.}.
This is illustrated in Figure \ref{fig:Gamma(pi,rho)}. To the best
of our knowledge, this graph was first introduced in \cite{franz1998partial}. 
\end{defn}

\begin{figure}
\centering{}\includegraphics[viewport=100bp 250bp 300bp 400bp]{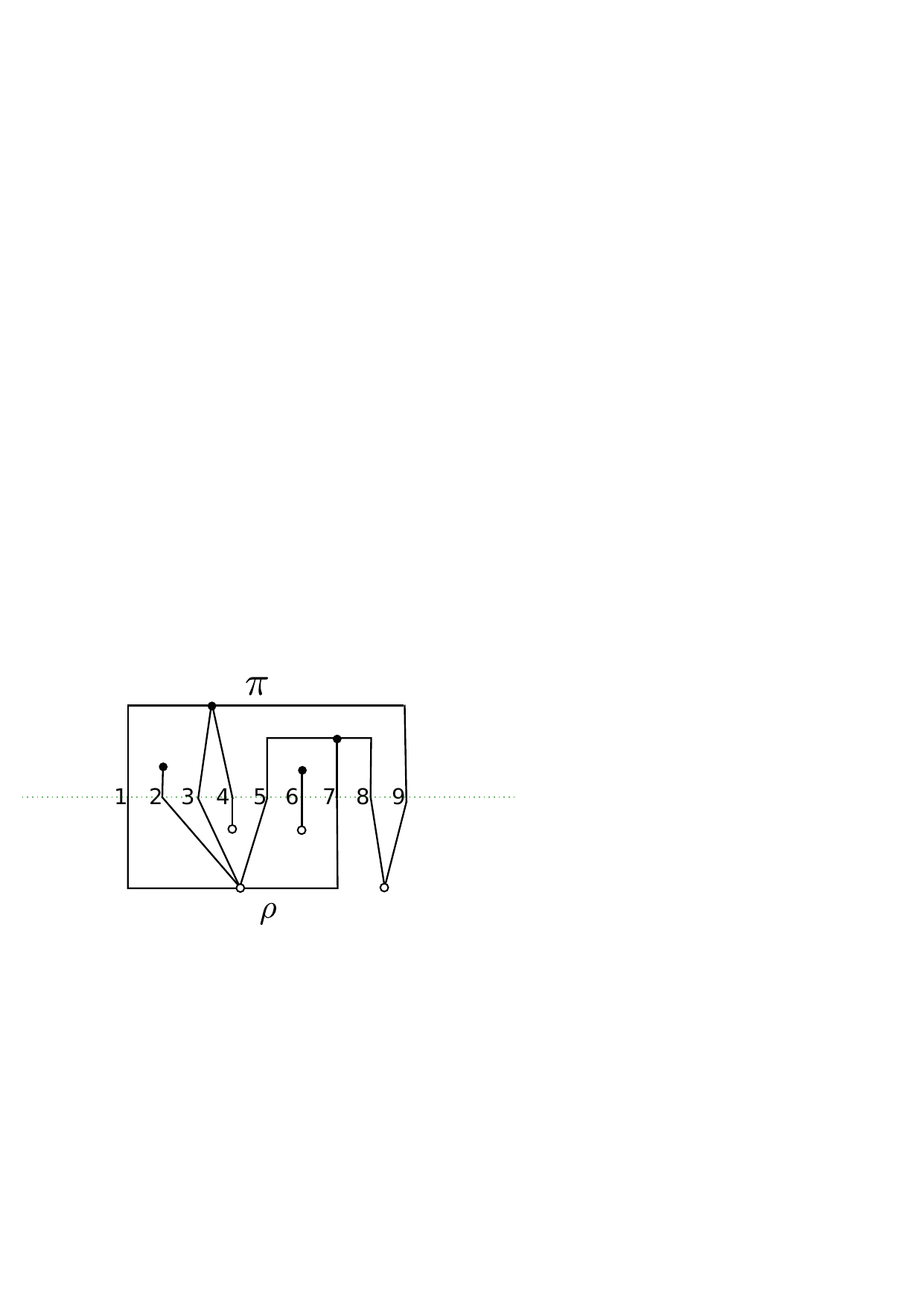}\caption{\label{fig:Gamma(pi,rho)}The planar, edge-labeled and vertex-colored
graph $\Gamma\left(\pi,\rho\right)$ (in solid lines) for two partitions
in $\nc\left(9\right)$: $\pi=\left\{ \left\{ 1,3,4,9\right\} ,\left\{ 2\right\} ,\left\{ 5,7,8\right\} ,\left\{ 6\right\} \right\} $
and $\rho=\left\{ \left\{ 1,2,3,5,7\right\} ,\left\{ 4\right\} ,\left\{ 6\right\} ,\left\{ 8,9\right\} \right\} $}
\end{figure}

\begin{rem}
\label{rem:veetild}For $\pi,\rho\in\nc\left(n\right)$, we let \marginpar{$\pi\veetild\rho$}$\pi\veetild\rho$
denote the smallest common upper bound of $\pi$ and $\rho$ in the
lattice of all partitions of $\left\{ 1,\ldots,n\right\} $, crossing
and non-crossing alike. It is easy to see that two numbers $i,j\in\{1,\ldots,n\}$
belong to the same block of $\pi\veetild\rho$ if and only if there
exist $k\in\bN$ and $i_{0},i_{1},\ldots,i_{2k}\in\{1,\ldots,n\}$
such that $i=i_{0}\ecpi i_{1}\ecrho i_{2}\ecpi\cdots\ecpi i_{2k-1}\ecrho i_{2k}=j$,
where $i\stackrel{\pi}{\sim j}$ means $i$ and $j$ belong to the
same block of $\pi$. Evidently, the blocks of $\pi\veetild\rho$
are in one-to-one correspondence with the connected components of
$\Gamma\left(\pi,\rho\right)$: the elements of a block correspond
to the edge-labels in a component. 
\end{rem}

\begin{defn}
\label{def:norm of perm}For a permutation $\sigma\in\cS_{n}$, denote
by \marginpar{$\left\Vert \sigma\right\Vert $}$\left\Vert \sigma\right\Vert $
its norm, defined as the length of the shortest product of transpositions
in $\cS_{n}$ giving $\sigma$. Namely, 
\[
\left\Vert \sigma\right\Vert \stackrel{\mathrm{def}}{=}\min\left\{ g\,\middle|\,\sigma=t_{1}\cdots t_{g},~\mathrm{where~}t_{1},\ldots,t_{g}~\mathrm{are~transpositions}\right\} .
\]
It is standard that $\left\Vert \sigma\right\Vert =n-\#\mathrm{cycles}\left(\sigma\right)$
(e.g.~\cite[Proposition 2.4]{hall2006meanders}). 
\end{defn}

We can now state the equivalent expressions for the distance $d_{H}\left(\pi,\rho\right)$
between two non-crossing partitions in the Hasse diagram $\cH_{n}$.
The equivalence of \ref{enu:d_H}, \ref{enu:n-M} and \ref{enu:perms}
from Theorem \ref{thm:equivalences for d_H} appears in \cite[Theorem 3.3]{hall2006meanders}
and independently in \cite{savitt2009polynomials}. The equivalence
of \ref{enu:n-M}, of \ref{enu:|pi|+|rho|-2pi_0} and, in a slightly
different language of \ref{enu:sum over con-com}, is shown in \cite{franz1998partial}. 
\begin{thm}
\label{thm:equivalences for d_H}Let $\pi,\rho\in\nc\left(n\right)$
be two non-crossing partitions of order $n$. Then the following quantities
are equal: 
\begin{enumerate}
\item \label{enu:d_H}$d_{H}\left(\pi,\rho\right)$ 
\item \label{enu:n-M}$n-\#M\left(\pi,\rho\right)$ 
\item \label{enu:perms}$\left\Vert P_{\pi}P_{\rho}^{-1}\right\Vert =n-\#\mathrm{cycles}\left(P_{\pi}P_{\rho}^{-1}\right)$ 
\item \label{enu:|pi|+|rho|-2pi_0}$\left|\pi\right|+\left|\rho\right|-2\left|\pi\veetild\rho\right|$ 
\item \label{enu:sum over con-com}${\displaystyle n-\sum_{\begin{gathered}C\colon{\scriptstyle \mathrm{connected}}\\
{\scriptstyle \mathrm{component~of~}\Gamma\left(\pi,\rho\right)}
\end{gathered}
}\#\mathrm{faces}\left(C\right)}$ 
\end{enumerate}
\end{thm}

\begin{proof}
For a short and elegant proof of the equivalence \ref{enu:d_H}$\Longleftrightarrow$\ref{enu:perms}
we refer the reader to \cite[Theorem 3.3]{hall2006meanders}. It is
an easy observation that the different components of the meandric
system $M\left(\pi,\rho\right)$ are in one-to-one correspondence
with the cycles of $P_{\pi}P_{\rho}^{-1}$: starting at a point on
the horizontal invisible line just left to some edge $i$ in $\Gamma\left(\pi,\rho\right)$,
the permutation $P_{\rho}^{-1}$ takes $i$ along the arc in the lower
half-plane, and then $P_{\pi}$ takes the resulting point through
an arc in the upper half-plane. Consult, for example, Figure \ref{fig:Gamma(pi,rho) with meandric system}.
This shows the equivalence \ref{enu:n-M}$\Longleftrightarrow$\ref{enu:perms}.

It is clear that the components of $M\left(\pi,\rho\right)$ are curves
tracing faces (in the sense of planar graphs) of $\Gamma\left(\pi,\rho\right)$,
which shows \ref{enu:n-M}$\Longleftrightarrow$\ref{enu:sum over con-com}.
Recall Euler's formula for a connected planar graph $C$: 
\[
v\left(C\right)-e\left(C\right)+f\left(C\right)=2,
\]
where $v\left(C\right)$ is the number of vertices of a graph, $e\left(C\right)$
the number of edges and $f\left(C\right)$ the number of faces. Thus,
recalling the fact from Remark \ref{rem:veetild} that $\left|\pi\veetild\rho\right|$
is the number of components of $\Gamma\left(\pi,\rho\right)$, we
obtain: 
\begin{eqnarray*}
n-\!\!\!\!\!\!\!\!\!\!\!\!\sum_{\begin{gathered}{\scriptstyle C:\mathrm{connected}}\\
{\scriptstyle \mathrm{component~of~}\Gamma\left(\pi,\rho\right)}
\end{gathered}
}\!\!\!\!\!\!\!\!\!\!\!\!f\left(C\right) & = & e\left(\Gamma\left(\pi,\rho\right)\right)-\!\!\!\!\!\!\!\!\!\!\!\!\sum_{\begin{gathered}{\scriptstyle C:\mathrm{connected}}\\
{\scriptstyle \mathrm{component~of~}\Gamma\left(\pi,\rho\right)}
\end{gathered}
}\!\!\!\!\!\!\!\!\!\!\!\!\left[2+e\left(C\right)-v\left(C\right)\right]\\
 & = & v\left(\Gamma\left(\pi,\rho\right)\right)-\!\!\!\!\!\!\!\!\!\!\!\!\sum_{\begin{gathered}{\scriptstyle C:\mathrm{connected}}\\
{\scriptstyle \mathrm{component~of~}\Gamma\left(\pi,\rho\right)}
\end{gathered}
}\!\!\!\!\!\!\!\!\!\!\!\!2\\
 & = & \left|\pi\right|+\left|\rho\right|-2\left|\pi\veetild\rho\right|,
\end{eqnarray*}
whence \ref{enu:|pi|+|rho|-2pi_0}$\Longleftrightarrow$\ref{enu:sum over con-com}. 
\end{proof}
\begin{figure}
\centering{}\includegraphics[viewport=100bp 250bp 300bp 400bp]{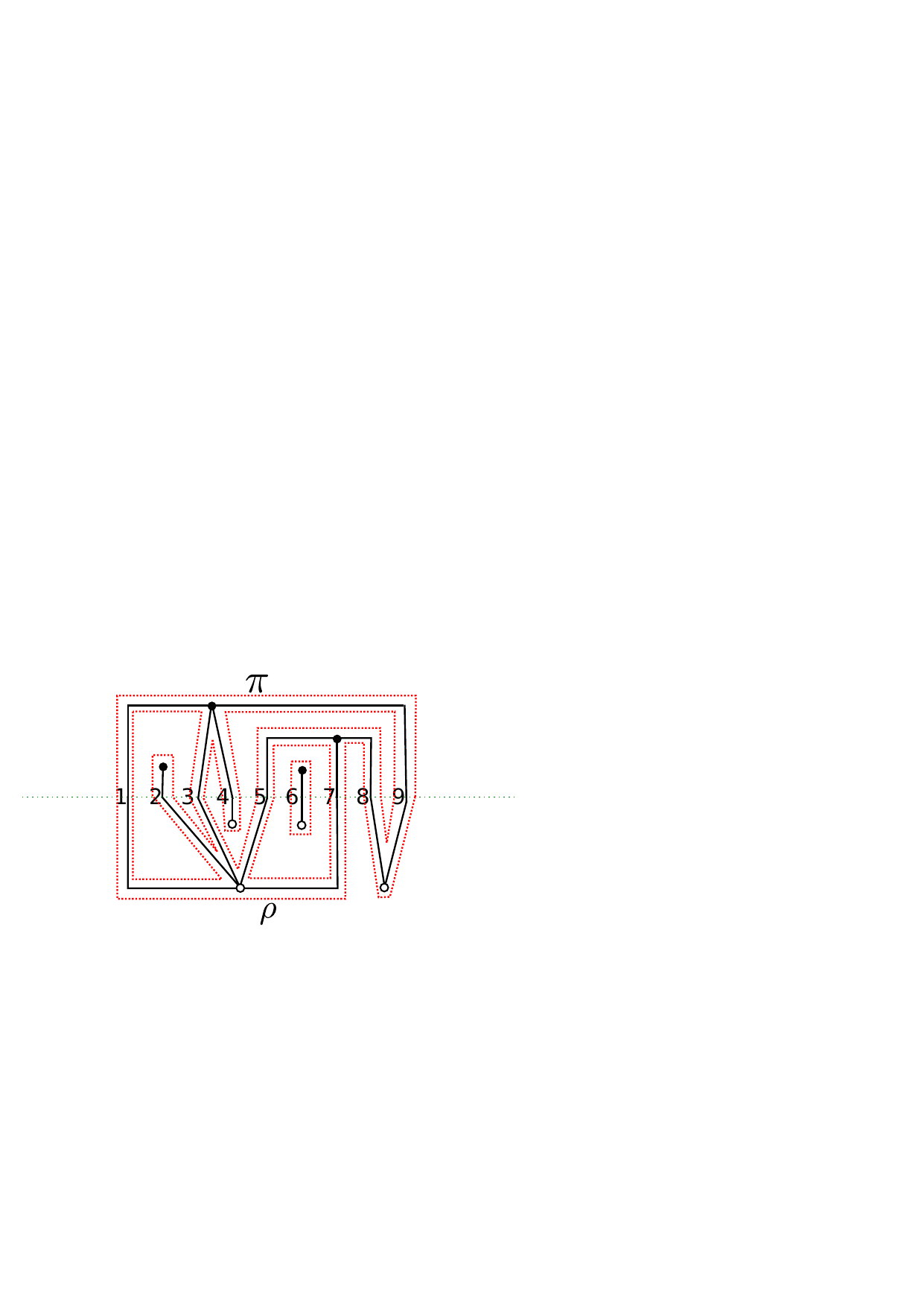}\caption{\label{fig:Gamma(pi,rho) with meandric system}The meandric system
$M\left(\pi,\rho\right)$ (in dashed lines) as obtained from $\Gamma\left(\pi,\rho\right)$,
the graph from Figure \ref{fig:Gamma(pi,rho)} (in solid lines); $\#M\left(\pi,\rho\right)=5$}
\end{figure}

\begin{rem}
\begin{enumerate}
\item An equivalent way of defining the norm of a permutation is through
a Cayley graph of $\cS_{n}$, as follows. Let $\mathrm{T}_{n}$ be
the set of $\binom{n}{2}$ transpositions in $\cS_{n}$. The Cayley
graph $\mathrm{Cay}\left(\cS_{n},\mathrm{T}_{n}\right)$ is a graph
with set of vertices corresponding to the elements of $\cS_{n}$,
and an edge between the permutations $\sigma$ and $\tau$ whenever
$\sigma\tau^{-1}\in\mathrm{T}_{n}$. Denote by $d_{C}\left(\sigma,\tau\right)$
the distance between two permutations in this graph. It is obvious
that $\left\Vert \sigma\right\Vert =d_{C}\left(e,\sigma\right)$ where
$e\in\cS_{n}$ is the identity, and, more generally, that $d_{C}\left(\sigma,\tau\right)=\left\Vert \sigma\tau^{-1}\right\Vert $.
This means that the equivalence \ref{enu:d_H}$\Longleftrightarrow$\ref{enu:perms}
from Theorem \ref{thm:equivalences for d_H} can be interpreted as
saying that the embedding $\nc\left(n\right)\hookrightarrow\cS_{n}$
from Definition \ref{def:embedding in S_n} is an isometry: $d_{H}\left(\pi,\rho\right)=d_{C}\left(P_{\pi},P_{\rho}\right)$. 
\item A sixth equivalent quantity can be added to the list in Theorem \ref{thm:equivalences for d_H}:
a natural distance defined on the set $\ncp\left(n\right)$ of non-crossing
pairings by saying that $\pi,\rho\in\ncp\left(n\right)$ are neighbors
if and only if one can be obtained from the other by a simple switch
$\left\{ a,b\right\} ,\left\{ c,d\right\} \mapsto\left\{ a,d\right\} ,\left\{ b,c\right\} $.
See \cite{savitt2009polynomials} for more details. 
\end{enumerate}
\end{rem}

As an immediate corollary from Theorem \ref{thm:equivalences for d_H}
we get the following equivalent criteria for $M\left(\pi,\rho\right)$
being a meander: 
\begin{cor}
\label{cor:equivalence for meander}Let $\pi,\rho\in\nc\left(n\right)$
be two non-crossing partitions of order $n$. Then the following are
equivalent: 
\begin{enumerate}
\item \label{enu:meander}$M\left(\pi,\rho\right)$ is a meander 
\item \label{enu:d_H=00003Dn-1}$d_{H}\left(\pi,\rho\right)=n-1$ namely,
$\pi$ and $\rho$ define a diameter in $\cH_{n}$ 
\item \label{enu:norm=00003Dn-1}$\left\Vert P_{\pi}P_{\rho}^{-1}\right\Vert =n-1$,
namely, $P_{\pi}P_{\rho}^{-1}$ is an $n$-cycle 
\item \label{enu:two conds}$\left|\pi\right|+\left|\rho\right|=n+1$ and
$\left|\pi\veetild\rho\right|=1$ 
\item \label{enu:tree}$\Gamma\left(\pi,\rho\right)$ is a tree 
\end{enumerate}
\end{cor}

\begin{proof}
The only equivalence which is not completely obvious from Theorem
\ref{thm:equivalences for d_H} is that of \ref{enu:two conds} with
the others, so we prove here \ref{enu:two conds}$\Longleftrightarrow$\ref{enu:tree}.
Indeed, since the graph $\Gamma\left(\pi,\rho\right)$ has exactly
$n$ edges, it is a tree if and only if it is connected ($\left|\pi\veetild\rho\right|=1$)
and has exactly $n+1$ vertices ($\left|\pi\right|+\left|\rho\right|=n+1$). 
\end{proof}
The following corollary shows that every non-crossing partition has
a meandric partner\footnote{The mere fact that every $\pi\in\nc\left(n\right)$ has a meandric
partner is not hard: for example, one can easily find $\rho\in\nc\left(n\right)$
whose diagram in the lower-half plane completes the diagram of $\pi$
in the upper half-plane into a tree.} which is, in some sense, canonical. The statement of this corollary
may be known, but we are not aware of a reference. 
\begin{cor}
\label{cor:pi,kr(pi) is meander}For every $\pi\in\nc\left(n\right)$,
$M\left(\pi,\Kr\left(\pi\right)\right)$ is a meander, or, equivalently\linebreak{}
 $d_{H}\left(\pi,\Kr\left(\pi\right)\right)=n-1$. 
\end{cor}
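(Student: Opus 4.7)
The plan is to invoke Corollary~\ref{cor:equivalence for meander} and verify its criterion~\ref{enu:tree}: that the bipartite graph $\Gamma(\pi,\Kr(\pi))$ is a tree. Remark~\ref{rem:|NC(n)|=00003D00003Dcat  and def of kr}(2) already gives $\left|\pi\right|+\left|\Kr(\pi)\right|=n+1$, so $\Gamma(\pi,\Kr(\pi))$ has $n+1$ vertices and $n$ edges, whence being a tree reduces to being connected.

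To establish connectedness, I would identify $\Gamma(\pi,\Kr(\pi))$ with the planar-dual graph of the decomposition of the upper half-plane cut by the $n$ arcs of the doubling $\partial\pi\in\ncp(n)$. Since those arcs are pairwise disjoint simple crosscuts of the simply-connected upper half-plane, inserting them one at a time shows that they decompose it into exactly $n+1$ regions and that the associated region-adjacency graph (one vertex per region, one edge per arc joining the two regions it bounds) is automatically a connected tree. The task is then to match this dual tree with $\Gamma(\pi,\Kr(\pi))$ vertex by vertex and edge by edge.

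For the vertex match I would use the two parallel constructions recalled in Remark~\ref{rem:|NC(n)|=00003D00003Dcat  and def of kr}(2): placing auxiliary points inside each interval $(p_{2i-1},p_{2i})$ realizes each block of $\pi$ as the set of indices lying in a common region (a ``$\pi$-region''), while placing auxiliary points inside each interval $(p_{2i},p_{2i+1})$ (with the $n$-th such point to the right of $p_{2n}$) does the analogous thing for the blocks of $\Kr(\pi)$, yielding ``$\Kr(\pi)$-regions''. Every region carries a non-trivial portion of $L$ on its boundary and hence contains at least one auxiliary point of one kind or the other, and because $|\pi|+|\Kr(\pi)|$ equals the total region count $n+1$, no region can host auxiliary points of both kinds; this gives a clean bijection between regions and vertices of $\Gamma(\pi,\Kr(\pi))$. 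For the edge match I would use the fact recalled just before Remark~\ref{rem:|NC(n)|=00003D00003Dcat  and def of kr} that every arc of $\partial\pi$ has one odd-indexed and one even-indexed endpoint, so the rule $i\mapsto$ ``the unique arc of $\partial\pi$ incident with $p_{2i}$'' is a bijection between $\{1,\ldots,n\}$ and the arcs. Locally near $p_{2i}$ that arc separates the auxiliary point of the first kind just to its left from the auxiliary point of the second kind just to its right, and hence globally separates the $\pi$-region corresponding to the block of $i$ in $\pi$ from the $\Kr(\pi)$-region corresponding to the block of $i$ in $\Kr(\pi)$, which is precisely the edge of $\Gamma(\pi,\Kr(\pi))$ labeled $i$.

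The main technical point will be the counting argument that forbids any single region from containing auxiliary points of both kinds, thereby delivering the bijective partition of the $n+1$ regions into $|\pi|$ $\pi$-regions and $|\Kr(\pi)|$ $\Kr(\pi)$-regions. Once that is secured, the planar-dual tree is canonically isomorphic to $\Gamma(\pi,\Kr(\pi))$, which is therefore itself a tree, and Corollary~\ref{cor:equivalence for meander} completes the proof.
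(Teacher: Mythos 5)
Your argument is correct, but it takes a genuinely different route from the one in the paper. The paper verifies criterion \ref{enu:two conds} of Corollary \ref{cor:equivalence for meander}: the identity $\left|\pi\right|+\left|\Kr(\pi)\right|=n+1$ is quoted from Remark \ref{rem:|NC(n)|=00003D00003Dcat  and def of kr}, and the connectedness statement $\left|\pi\,\veetild\,\Kr(\pi)\right|=1$ is obtained in two lines from the permutation identity \eqref{eqn:Kreweras via S_n}: since $P_{\Kr(\pi)}(i-1)=P_{\pi}^{-1}(i)$, one has $i\ecpi P_{\pi}^{-1}(i)\stackrel{\Kr(\pi)}{\sim}i-1$ for every $i=2,\ldots,n$, so all elements lie in a single block of $\pi\,\veetild\,\Kr(\pi)$. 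You instead verify criterion \ref{enu:tree}, identifying $\Gamma(\pi,\Kr(\pi))$ with the planar dual of the arrangement of the $n$ disjoint crosscuts $\partial\pi$ of the upper half-plane, which is automatically a tree. Your route is longer and demands more topological care, but it is more conceptual: it exhibits $\Gamma(\pi,\Kr(\pi))$ as literally the dual tree of the upper-half-plane picture of $\pi$, which explains why the Kreweras complement is the ``canonical'' meandric partner alluded to in the footnote preceding the corollary. The paper's route is shorter and purely algebraic, exploiting the embedding $\pi\mapsto P_{\pi}$ of Definition \ref{def:embedding in S_n}.

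One small step in your write-up needs tightening. Of the $2n+1$ intervals into which $p_{1},\ldots,p_{2n}$ cut the line $L$, the leftmost interval $(-\infty,p_{1})$ carries no auxiliary point, so ``every region carries a portion of $L$ on its boundary, hence contains an auxiliary point'' is not quite automatic. It is rescued by observing that no arc endpoint separates $(-\infty,p_{1})$ from $(p_{2n},+\infty)$ (pass through the point at infinity in the disk model), so these two intervals border the same unbounded region, which therefore contains the auxiliary point of the second kind lying to the right of $p_{2n}$. With that, every region does contain an auxiliary point, and your count of $n+1$ regions against $\left|\pi\right|+\left|\Kr(\pi)\right|=n+1$ forces each region to contain points of exactly one kind, as claimed.
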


\begin{proof}
We show that Criterion \ref{enu:two conds} from Corollary \ref{cor:equivalence for meander}
holds. Indeed, the well-known fact that $\left|\pi\right|+\left|\Kr\left(\pi\right)\right|=n+1$
was explained in Remark \ref{rem:|NC(n)|=00003Dcat  and def of kr}.
To see that $\left|\pi\veetild\Kr\left(\pi\right)\right|=1$, note
that for every $i=2,\ldots,n$, 
\[
i\stackrel{\pi}{\sim}P_{\pi}^{-1}\left(i\right)\stackrel{\Kr\left(\pi\right)}{\sim}i-1
\]
because, by \eqref{eqn:Kreweras via S_n}, $P_{\Kr\left(\pi\right)}\left(i-1\right)=\left[P_{\pi}^{-1}\left(1~2~\ldots~n\right)\right]\left(i-1\right)=P_{\pi}^{-1}\left(i\right)$.
Hence $i$ and $i-1$ belong to the same block of $\pi\veetild\Kr\left(\pi\right)$. 
\end{proof}
\begin{cor}
\label{cor:lower-bound for average dist}For every $n\in\bN$ one
has that 
\begin{equation}
\frac{1}{\Cat_{n}^{2}}\ \sum_{\pi,\rho\in\nc(n)}\,\dH(\pi,\rho)\geq\frac{n-1}{2}.
\end{equation}
\end{cor}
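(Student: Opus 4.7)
The plan is to leverage Corollary \ref{cor:pi,kr(pi) is meander}, which provides, for each $\pi \in \nc(n)$, a canonical ``diameter partner'' $\Kr(\pi)$ satisfying $\dH(\pi,\Kr(\pi)) = n-1$. Combined with the triangle inequality for the graph metric $\dH$ on $\cH_n$, this should immediately give the desired lower bound on the average, once one exploits the fact that $\Kr$ is a bijection of $\nc(n)$ onto itself.

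More concretely, I would first fix arbitrary $\pi, \rho \in \nc(n)$ and write
\[
n - 1 \;=\; \dH(\pi, \Kr(\pi)) \;\leq\; \dH(\pi, \rho) + \dH(\rho, \Kr(\pi)),
\]
using Corollary \ref{cor:pi,kr(pi) is meander} for the equality and the triangle inequality for the graph distance $\dH$ in $\cH_n$. Next, I would sum this inequality over all pairs $(\pi, \rho) \in \nc(n)^2$, yielding
\[
(n-1)\,\Cat_n^{\,2} \;\leq\; \sum_{\pi,\rho \in \nc(n)} \dH(\pi, \rho) \;+\; \sum_{\pi,\rho \in \nc(n)} \dH(\rho, \Kr(\pi)).
\]

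The crux of the argument is then to observe that the second sum on the right equals the first. Since $\Kr \colon \nc(n) \to \nc(n)$ is a bijection (see Remark \ref{rem:kreweras}), the substitution $\pi' := \Kr(\pi)$ is a reindexing of $\nc(n)$, and so
\[
\sum_{\pi,\rho \in \nc(n)} \dH(\rho, \Kr(\pi)) \;=\; \sum_{\pi',\rho \in \nc(n)} \dH(\rho, \pi') \;=\; \sum_{\pi,\rho \in \nc(n)} \dH(\pi, \rho),
\]
where the final equality uses the symmetry $\dH(\rho,\pi') = \dH(\pi',\rho)$. Plugging this back in, we obtain $(n-1)\Cat_n^{\,2} \leq 2 \sum_{\pi,\rho} \dH(\pi,\rho)$, which after dividing by $2\Cat_n^{\,2}$ is exactly the claimed bound.

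There is no significant obstacle here: the only nontrivial input is the existence of a canonical meandric partner $\Kr(\pi)$ at maximal distance $n-1$, and this was already established. The proof should be only a few lines long.
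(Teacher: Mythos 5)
Your proposal is correct and follows essentially the same argument as the paper: the triangle inequality applied to $\dH(\pi,\Kr(\pi))=n-1$ from Corollary \ref{cor:pi,kr(pi) is meander}, followed by the observation that bijectivity of $\Kr$ makes the two resulting sums equal. The only cosmetic difference is that the paper first fixes $\rho$ and sums over $\pi$ before summing over $\rho$, whereas you sum over both variables at once.
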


\begin{proof}
We fix a partition $\rho\in\nc\left(n\right)$ and we use the triangle
inequality and Corollary \ref{cor:pi,kr(pi) is meander} to infer
that: 
\[
\dH(\pi,\rho)+\dH\left(\Kr_{n}(\pi),\rho\right)\geq\dH\left(\pi,\Kr_{n}(\pi)\right)=n-1,\ \ \forall\,\pi\in\nc\left(n\right).
\]
Summing over $\pi$ gives 
\[
\left(\sum_{\pi\in\nc(n)}\dH\left(\pi,\rho\right)\right)+\left(\sum_{\pi\in\nc(n)}\dH\left(\Kr_{n}(\pi),\rho\right)\right)\geq\left(n-1\right)\cdot\Cat_{n}.
\]
However, the two sums on the left-hand side of the preceding equation
are equal to each other (because $\Kr_{n}$ is bijective), so what
we have obtained is that, for our fixed $\rho$: 
\[
\sum_{\pi\in\nc(n)}\dH(\pi,\rho)\geq\frac{n-1}{2}\cdot\Cat_{n}.
\]
Finally, we let $\rho$ vary in $\nc(n)$ and sum over it, and the
required formula follows. 
\end{proof}

\section{Interval Partitions and Meanders with Shallow Top\label{sec:Interval-Partitions}}
\begin{defn}
\label{def:interval}A partition $\pi$ of $\{1,\ldots,n\}$ is said
to be an \emph{interval partition} when every block of $\pi$ is of
the form $\left\{ i\in\mathbb{N}\,\middle|\,p\leq i\leq q\right\} $
for some $p\leq q$ in $\left\{ 1,\ldots,n\right\} $. We denote the
set of all interval partitions of $\left\{ 1,\ldots,n\right\} $ by
$\Int\left(n\right)$. It is obvious that $\Int(n)\subseteq\nc\left(n\right)$,
and it is easy to count that $\left|\Int(n)\right|=2^{n-1}$. Some
of the arguments and results below work, in fact, for the larger collection
\marginpar{$\intc\left(n\right)$}$\intc\left(n\right)$ which includes
all cyclic permutations of interval partitions. This is still a subset
of $\nc\left(n\right)$, with $\left|\intc\left(n\right)\right|=2^{n}-n$.

If $\pi\in\Int\left(n\right)$, we say that the associated non-crossing
pairing $\partial\pi\in\ncp\left(n\right)$ is \emph{shallow. }We
say that the meandric system $M\left(\pi,\rho\right)$ with $\pi,\rho\in\nc\left(n\right)$
has a\marginpar{shallow top} \emph{shallow top} if $\pi\in\Int\left(n\right)$
is an interval partition. 
\end{defn}

Note that $\partial\pi\in\ncp\left(n\right)$ being shallow is equivalent
to the fact that its diagram, as in Figure \ref{fig:from ncp to nc},
can be drawn with at most two curves above every point on the infinite
horizontal line. This is the rationale behind the term ``shallow''. 
\begin{prop}
\label{prop:d_H for interval partitions}Let $n\in\mathbb{N}$ and
consider partitions $\pi\in\intc\left(n\right)$ and $\rho\in\nc\left(n\right)$.
Then $\pi\vee\rho=\pi\veetild\rho$, and consequently 
\begin{equation}
\dH(\pi,\rho)=|\pi|+|\rho|-2|\pi\vee\rho|.\label{eqn:d_H for interval partition}
\end{equation}
\end{prop}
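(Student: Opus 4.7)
The plan is to deduce \eqref{eqn:d_H for interval partition} from the identity $\pi \vee \rho = \pi \veetild \rho$, since item (4) of Theorem \ref{thm:equivalences for d_H} already gives $\dH(\pi,\rho) = |\pi| + |\rho| - 2|\pi \veetild \rho|$ for all $\pi, \rho \in \nc(n)$. In general $\pi \veetild \rho$ refines $\pi \vee \rho$ (because $\pi \vee \rho$ is itself an upper bound of $\pi$ and $\rho$ in the full partition lattice, while $\pi \veetild \rho$ is the least such upper bound there), so the two joins coincide if and only if $\pi \veetild \rho$ happens to be non-crossing. The entire task therefore reduces to the following claim: for $\pi \in \intc(n)$ and $\rho \in \nc(n)$, the partition $\pi \veetild \rho$ belongs to $\nc(n)$.

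I would first establish this for $\pi \in \Int(n)$, and then deduce the case of a general $\pi \in \intc(n)$ via the cyclic symmetry of $\nc(n)$: a rotation of $\{1, \ldots, n\}$ that straightens the (at most one) wrap-around block of $\pi$ takes the pair $(\pi, \rho)$ into a pair $(\pi', \rho') \in \Int(n) \times \nc(n)$, and both $\veetild$ and $\vee$ transform covariantly under this rotation. So assume $\pi \in \Int(n)$ with blocks $I_1 < \cdots < I_m$, and let $f \colon \{1, \ldots, n\} \to \{1, \ldots, m\}$ be the non-decreasing contraction sending $i \in I_k$ to $k$. Let $\bar\rho$ denote the finest partition of $\{1, \ldots, m\}$ for which $f$ sends each $\rho$-block into a single block; a routine check using the chain description from Remark \ref{rem:veetild} shows that the blocks of $\pi \veetild \rho$ are precisely the preimages $f^{-1}(K)$ as $K$ ranges over the blocks of $\bar\rho$. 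Because each $I_k$ is a set of consecutive integers, $\pi \veetild \rho$ is non-crossing on $\{1, \ldots, n\}$ if and only if $\bar\rho$ is non-crossing on $\{1, \ldots, m\}$: a crossing on either side transfers to the other, by projecting through $f$ in one direction and by choosing representatives in each $I_k$ in the other. So the claim reduces to verifying $\bar\rho \in \nc(m)$.

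The map $f$ is an iterated composition of the elementary operation of identifying two consecutive positions $i$ and $i+1$ (applied exactly when $i, i+1$ lie in the same $\pi$-interval), so it suffices to establish the following \emph{gluing lemma}: if $\sigma \in \nc(n)$ and $\sigma'$ is the partition of $\{1, \ldots, n-1\}$ obtained from $\sigma$ by identifying positions $i$ and $i+1$, then $\sigma' \in \nc(n-1)$. My plan for the gluing lemma is a direct lift-and-contradict: given a hypothetical crossing $\alpha < \beta < \gamma < \delta$ in $\sigma'$, lift $\alpha, \beta, \gamma, \delta$ back to $\{1, \ldots, n\}$ (uniquely for positions different from the glue point, with two choices at the glue point) and then exhibit an explicit crossing in $\sigma$ between a suitable pair of its blocks.

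The main obstacle is the bookkeeping in the one case where a hypothetical $\sigma'$-block of the crossing is the merger of two $\sigma$-blocks $V_i \ni i$ and $V_{i+1} \ni i+1$. Here one has to keep track both of where the glue point falls among the lifts $\tilde\alpha < \tilde\beta < \tilde\gamma < \tilde\delta$, and of whether $\tilde\alpha, \tilde\gamma$ both land in one of $V_i, V_{i+1}$ or split across them. If they split, the crossing in $\sigma$ appears directly between $V_i$ and $V_{i+1}$, which in that subcase straddle the glue point in interleaved fashion with $\tilde\alpha$ and $\tilde\gamma$; if they do not, the crossing appears between whichever of $V_i, V_{i+1}$ contains them and the unique $\sigma$-block lifting the opposite $\sigma'$-block. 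Each subcase is elementary once the positions are arranged. With the gluing lemma in place, iterating yields $\bar\rho \in \nc(m)$, hence $\pi \veetild \rho \in \nc(n)$, hence $\pi \veetild \rho = \pi \vee \rho$, and item (4) of Theorem \ref{thm:equivalences for d_H} now delivers \eqref{eqn:d_H for interval partition}.
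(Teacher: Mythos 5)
Your proposal is correct and follows essentially the same route as the paper: both arguments reduce everything to the single elementary fact that merging the blocks of two (cyclically) adjacent elements of a non-crossing partition again yields a non-crossing partition, verified by a direct crossing-transfer case analysis. Your packaging (contracting each interval block of $\pi$ to a point and analysing the induced partition $\bar\rho$ on the quotient) is an equivalent reformulation of the paper's device of writing $\pi$ as an iterated $\veetild$ of the elementary pair partitions $\sigma_{i,i+1}$ and invoking associativity of $\veetild$.
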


\begin{proof}
The equality \eqref{eqn:d_H for interval partition} follows from
$\pi\vee\rho=\pi\veetild\rho$ together with the equivalence \ref{enu:d_H}$\Longleftrightarrow$\ref{enu:|pi|+|rho|-2pi_0}
in Theorem \ref{thm:equivalences for d_H}, so it is enough to prove
that $\pi\vee\rho=\pi\veetild\rho$. Assume first that $\pi$ consists
of $n-1$ blocks: a block of size two and $n-2$ singletons. By symmetry,
we can assume the block of size two is $\left\{ 1,n\right\} $. We
claim that for every $\rho\in\nc\left(n\right)$, the partition $\pi\veetild\rho$
is non-crossing. Indeed, $\pi\veetild\rho$ is the partition obtained
from $\rho$ by merging the block containing $1$ with the one containing
$n$. If $1\stackrel{\rho}{\sim}n$, then $\pi\veetild\rho=\rho$
and the claim is clear. Otherwise, assume the block of $1$ in $\rho$
is $B=\left\{ a_{1}=1,a_{2},\ldots,a_{k}\right\} $ and the block
of $n$ is $B'=\left\{ c_{1},\ldots,c_{\ell-1},c_{\ell}=n\right\} $
with 
\[
1<a_{2}<a_{3}<\ldots<a_{k}<c_{1}<c_{2}<\ldots<c_{\ell-1}<n.
\]
Now assume $a<b<c<d$ in $\left\{ 1,\ldots,n\right\} $ with $a,c$
in one block of $\pi\veetild\rho$ and $b,d$ in another. Since $\rho$
is non-crossing, one of these blocks has to be the new block $B\cup B'$
and the other $B''$ -- a third, different block of $\rho$. Without
loss of generality, 
\[
a,c\in\left\{ a_{1}=1,a_{2},\ldots,a_{k},c_{1},\ldots,c_{\ell-1},c_{\ell}=n\right\} 
\]
and $b,d\in B''$. Moreover, we must have $a=a_{i}$ and $c=c_{j}$
for some $i$ and $j$, again because $\rho$ is non-crossing. But
then the four numbers $b<c=c_{j}<d<n$ contradict the fact that $\rho$
is non-crossing.

For a general $\pi\in\intc\left(n\right)$, let $\mathrm{Pairs}\left(\pi\right)=\left\{ \left\{ i,i+1\right\} \,\middle|\,i\in\left\{ 1,\ldots,n\right\} ~\,\&~\,i\stackrel{\pi}{\sim}i+1\right\} $,
where the addition is modulo $n$ so if $i=n$, then $i+1=1$. For
$i\ne j$ in $\left\{ 1,\ldots,n\right\} $, let $\sigma_{i,j}\in\nc\left(n\right)$
be the partition consisting of the block $\left\{ i,j\right\} $ together
with $n-2$ singletons. It is clear that $\pi=\tilde{\bigvee}_{\left\{ i,i+1\right\} \in\mathrm{Pairs}\left(\pi\right)}\sigma_{i,i+1}$,
so $\rho\veetild\pi=\rho\veetild\left(\tilde{\bigvee}_{\left\{ i,i+1\right\} \in\mathrm{Pairs}\left(\pi\right)}\sigma_{i,i+1}\right)$.
Since the operator $\veetild$ is associative, we are done by the
special case. 
\end{proof}
\begin{rem}
As a converse to the preceding proposition, we observe that if $\pi\in\nc(n)$
has the property that $\pi\vee\rho=\pi\veetild\rho$ for every $\rho\in\nc\left(n\right)$,
then it follows that $\pi\in\intc\left(n\right)$. Here is the simple
argument: assume by contradiction that, up to a cyclic shift, there
exists a block $B\in\pi$ and numbers $1\le a<b<c<d\le n$ such that
$a,c\in B$ yet $b,d\notin B$. Consider the non-crossing partition
$\rho\in\nc\left(n\right)$ consisting of the block $\left\{ b,d\right\} $
together with $n-2$ singletons. It is easy to see that $\pi\veetild\rho$
is the partition obtained from $\pi$ by merging the block containing
$b$ with the block containing $d$. Clearly, this partition is not
non-crossing, hence it cannot equal $\pi\vee\rho$. 
\end{rem}

There is a version of Theorem \ref{thm:main-shallow meanders} for
the more generalized notion of interval partitions: 
\begin{thm}
\label{thm:intc-nc}For every $n\in\bN$, the number of pairs $\left(\pi,\rho\right)\in\intc\left(n\right)\times\nc\left(n\right)$
which define a meander and such that $\left|\pi\right|=m$ is 
\begin{equation}
\begin{cases}
1 & m=1\\
\frac{1}{m}\left(\begin{array}{c}
n\\
m-1
\end{array}\right)\left(\begin{array}{c}
n+m-1\\
n-m
\end{array}\right) & m\ge2
\end{cases}.\label{eqn:meanders with intc on top}
\end{equation}
\end{thm}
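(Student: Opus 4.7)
The plan is to deduce Theorem~\ref{thm:intc-nc} from Theorem~\ref{thm:main-=00003D000023shallow meanders} by exploiting the cyclic rotation symmetry of $\nc(n)$. Let $\tau$ denote the rotation $i\mapsto i+1\pmod{n}$, acting on a partition by $\tau(\pi)=\{\tau(V)\mid V\in\pi\}$. Since non-crossingness is a cyclic rather than a linear property, $\tau$ is an order-automorphism of $(\nc(n),\leq)$; in particular, it induces an automorphism of the Hasse diagram $\cH_{n}$, so both $\dH$ and $|\cdot|$ are $\tau$-invariant. Clearly $\tau$ also maps $\intc(n)$ bijectively onto itself.

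The case $m=1$ is immediate: the only $\pi\in\intc(n)$ with a single block is $1_{n}$, and Proposition~\ref{prop:diameter of H_n}(3) forces a meandric partner to satisfy $|\rho|=n$, so $\rho=0_{n}$, yielding the count~$1$.

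For $m\geq 2$, let $T_{m}\subseteq\intc(n)\times\nc(n)$ and $T'_{m}\subseteq\Int(n)\times\nc(n)$ denote the sets of meandric pairs with $|\pi|=m$; by Theorem~\ref{thm:main-=00003D000023shallow meanders}, $|T'_{m}|=\frac{1}{n}\binom{n}{m-1}\binom{n+m-1}{n-m}$. I would consider the map
\[
\Phi\colon T'_{m}\times(\bZ/n\bZ)\longrightarrow T_{m},\qquad \Phi\bigl((\pi,\rho),k\bigr)=\bigl(\tau^{k}(\pi),\tau^{k}(\rho)\bigr),
\]
which is well-defined by the preservation properties above, and show that $\Phi$ is surjective with every fiber of cardinality exactly $m$. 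This would give $n\cdot|T'_{m}|=m\cdot|T_{m}|$, and hence
\[
|T_{m}|=\frac{n}{m}\cdot\frac{1}{n}\binom{n}{m-1}\binom{n+m-1}{n-m}=\frac{1}{m}\binom{n}{m-1}\binom{n+m-1}{n-m},
\]
exactly as required.

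The only place that demands a dedicated argument, and thus the main (and rather mild) obstacle, is the fiber-size claim. The key observation is that a cyclic interval partition $\sigma\in\intc(n)$ with $m$ blocks possesses exactly $m$ ``cuts'' around the cyclic arrangement of $\{1,\ldots,n\}$ (one between each adjacent pair of elements that lie in distinct blocks), and $\sigma\in\Int(n)$ iff one such cut falls between $n$ and $1$. Consequently, for fixed $(\sigma,\eta)\in T_{m}$, the preimages of $(\sigma,\eta)$ under $\Phi$ are precisely the triples $\bigl((\tau^{-k}(\sigma),\tau^{-k}(\eta)),k\bigr)$ where $k\in\bZ/n\bZ$ ranges over the $m$ values that rotate a cut of $\sigma$ into the end slot. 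Surjectivity is built into the same observation, since any $\sigma\in\intc(n)$ admits such a shift. With the fiber count established, the formula follows.
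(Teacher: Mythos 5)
Your argument is correct, and it takes a genuinely different route from the paper's. The paper omits a written proof of Theorem \ref{thm:intc-nc}, stating only that it ``follows the same lines'' as Theorem \ref{thm:main-=00003D000023shallow meanders}: one adapts the tree bijection as in Remark \ref{rem:bijection for cyclic interval} (the root now carries two marked edges, one for the element $1$ and one for the smallest element of its cyclic interval) and then reruns the Lagrange-inversion computation of Section \ref{sec:Enumerative-consequences}. You instead deduce the $\intc(n)$ count directly from the already-established $\Int(n)$ count by double-counting along the cyclic rotation action: since $\tau$ is an automorphism of $(\nc(n),\leq)$ it preserves $\dH$, $|\cdot|$ and the meander property, and your fiber computation is right --- for $\sigma\in\intc(n)$ with $m\geq2$ blocks, exactly $m$ of the $n$ shifts $\tau^{-k}(\sigma)$ land in $\Int(n)$ (one for each of the $m$ cuts of $\sigma$), giving $n\,|T'_m|=m\,|T_m|$, and the separate treatment of $m=1$ (where $1_n$ has no cuts, so the orbit argument degenerates) is exactly what the case split in \eqref{eqn:meanders with intc on top} calls for. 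Your approach is shorter and entirely elementary given Theorem \ref{thm:main-=00003D000023shallow meanders}, and it is very much in the spirit of the cyclic-symmetry step the paper itself uses to get $N_a(m,n;\bi,\bj)=\frac{j_a}{m}N(m,n;\bi,\bj)$ in Proposition \ref{prop:N(m,n,i,j)}; what the paper's sketched route buys instead is the refined enumeration of the $\intc(n)$ meanders by block-size profile (the analogues of Propositions \ref{prop:N(m,n,i,j)} and \ref{prop:N(m,n,j)}), which your aggregate double-count does not immediately provide, though it could be recovered by applying the same orbit argument profile by profile.
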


Compare this with the quantity $\frac{1}{n}\left(\begin{array}{c}
n\\
m-1
\end{array}\right)\left(\begin{array}{c}
n+m-1\\
n-m
\end{array}\right)$ from Theorem \ref{thm:main-shallow meanders}, where $\pi$ is conditioned
to be in the smaller set $\Int\left(n\right)$ rather than in $\intc\left(n\right)$.
The proof of Theorem \ref{thm:intc-nc} follows the same lines as
the one of Theorem \ref{thm:main-shallow meanders}, with minor adaptations,
and we omit it, but do consult Remark \ref{rem:bijection for cyclic interval}
below.

In Remark \ref{rem:exp growth rate of shallow meanders} we mentioned
the exponential growth rate of the number of meanders with shallow
top, and how it is compared with the conjectural exponential growth
rate of the total number of meanders. We now describe the computation
leading from Theorem \ref{thm:main-shallow meanders} to this quantity: 
\begin{cor}
\label{cor:EGR of meanders with shallow top}The exponential growth
rate of the number of meanders with shallow top of order $n$ is roughly
$5.22$. 
\end{cor}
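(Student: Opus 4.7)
The plan is to apply Theorem~\ref{thm:main-=00003D000023shallow meanders}, which expresses
\[
|\mst(n)| \;=\; \sum_{m=1}^{n} \frac{1}{n}\binom{n}{m-1}\binom{n+m-1}{n-m},
\]
and to extract the exponential growth rate via a standard saddle-point / Laplace analysis. Since the sum has only $n$ positive terms, $\sqrt[n]{|\mst(n)|}$ has the same limit as the $n$-th root of its largest summand, so the problem reduces to locating the maximizing value of $m$.

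First I would set $m = xn$ with $x \in (0,1)$ and apply Stirling's formula to each of the two binomial coefficients. A short simplification shows that, up to factors polynomial in $n$,
\[
\frac{1}{n}\binom{n}{m-1}\binom{n+m-1}{n-m} \;=\; f(x)^{\,n}\cdot n^{O(1)},
\]
where
\[
f(x) \;=\; \frac{(1+x)^{1+x}}{4^{x}\, x^{3x}\,(1-x)^{2(1-x)}},
\]
equivalently $\log f(x) = (1+x)\log(1+x) - 3x\log x - 2(1-x)\log(1-x) - 2x\log 2$.

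Next I would maximize $\log f$ over $(0,1)$. Differentiation yields the stationarity condition $\log(1+x)+2\log(1-x) = \log x + 2\log(2x)$, which is equivalent to the polynomial identity $(1+x)(1-x)^2 = 4x^3$, i.e. to the cubic
\[
3x^3 + x^2 + x - 1 \;=\; 0.
\]
Its derivative $9x^2+2x+1$ is strictly positive, so this cubic has a single real root $x_0$, and since the cubic takes the values $-1$ at $x=0$ and $4$ at $x=1$ we have $x_0\in(0,1)$. Combined with the fact that $(\log f)'(x)\to+\infty$ as $x\to 0^+$ and $(\log f)'(x)\to-\infty$ as $x\to 1^-$, this root is the unique maximizer of $\log f$ on $(0,1)$.

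Finally I would locate $x_0$ and $f(x_0)$ numerically: iterating the cubic gives $x_0 \approx 0.4694$, and plugging this value into the closed form for $f$ yields $f(x_0) \approx 5.22$. Standard Laplace-method bounds (the polynomial-in-$n$ correction from Stirling, and the replacement of $x_0 n$ by $\lfloor x_0 n\rfloor$, both cost only subexponential factors, using the local concavity of $\log f$ at $x_0$) guarantee that $\sqrt[n]{|\mst(n)|} \to f(x_0) \approx 5.22$, which is the statement of the corollary. There is no serious obstacle here; the main work is verifying that the dominant term indeed controls the sum, which is routine once the concavity of $\log f$ near $x_0$ is noted.
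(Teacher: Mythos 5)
Your proposal is correct and follows essentially the same route as the paper: reduce $\sqrt[n]{|\mst(n)|}$ to the $n$-th root of the largest summand, apply Stirling with $m=\alpha n$, and maximize the resulting function of $\alpha$ (your $f(x)$ is literally the paper's expression, since $\alpha^{\alpha}(2\alpha)^{2\alpha}=4^{\alpha}\alpha^{3\alpha}$), obtaining the maximum at $\alpha\approx 0.4694$ with value $\approx 5.22$. The only difference is that you pin down the maximizer exactly as the unique real root of $3x^{3}+x^{2}+x-1=0$, where the paper settles for "simple numerical analysis" — a small but genuine refinement.
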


\begin{proof}
Recall that $\mst\left(n\right)$ denotes the set of meanders of order
$n$ with shallow top. The exponential growth rate we compute is,
by definition,$\overline{\lim}_{n\to\infty}\sqrt[n]{\left|\mst\left(n\right)\right|}.$
Since exponential growth rate does not see polynomial factors, for
the sake of computing it we may replace the summation in \eqref{eqn:meanders with shallow top}
by the following quantities: 
\begin{eqnarray*}
\left|\mst\left(n\right)\right| & = & \sum_{m=1}^{n}\frac{1}{n}\binom{n}{m-1}\binom{n+m-1}{n-m}\\
 & \asymp & \max_{1\le m\le n}\binom{n}{m}\binom{n+m}{n-m}\\
 & \asymp & \max_{\alpha\in\left(0,1\right)}\binom{n}{\alpha n}\binom{\left(1+\alpha\right)n}{\left(1-\alpha\right)n},
\end{eqnarray*}
(here $\asymp$ means ``up to polynomial factors in $n$''). By
Stirling's formula, for $\alpha>\beta$ one has $\binom{\alpha n}{\beta n}\asymp\left[\frac{\alpha^{\alpha}}{\beta^{\beta}\left(\alpha-\beta\right)^{\alpha-\beta}}\right]^{n}$
so 
\begin{equation}
\left|\mst\left(n\right)\right|\asymp\left[\max_{\alpha\in\left(0,1\right)}\frac{\left(1+\alpha\right)^{1+\alpha}}{\alpha^{\alpha}\left(1-\alpha\right)^{2\left(1-\alpha\right)}\left(2\alpha\right)^{2\alpha}}\right]^{n}.\label{eq:max-alpha}
\end{equation}
Simple (numerical) analysis shows that the maximum is obtained for
$\alpha\approx0.4694$, for which the fraction in \eqref{eq:max-alpha}
gives, roughly, $5.21914$. 
\end{proof}
\begin{rem}
\begin{enumerate}
\item Interestingly, the value of $m$ which asymptotically generates the
most meanders $M\left(\pi,\rho\right)$ with $\pi\in\Int\left(n\right)$
and $\left|\pi\right|=m$ is, therefore, $m\approx0.47n$. This is
in contrast to the case of general non-crossing partitions, where
it is expected that the maximum is obtained for $m\approx0.5n$. At
least, by the top-down symmetry of $\cH_{n}$, if the maximum is obtained
for $\alpha n$, it is also obtained for $\left(1-\alpha\right)n$.
We also comment that the most common number of blocks in an interval
partition, or in a general non-crossing partition, is $m\approx0.5n$.
See also Conjecture \ref{conj:rainbow partitions} in this regard. 
\item The statement of Corollary \ref{cor:EGR of meanders with shallow top}
holds just as well, and with the same proof, for the somewhat larger
set of meanders with top partition in $\intc\left(n\right)$. 
\end{enumerate}
\end{rem}

\section{{\large{}A Tree Bijection for Meanders with Shallow Top\label{sec:bijection}}}

Recall the graph $\Gamma\left(\pi,\rho\right)$ from Definition \ref{def:Gamma(pi,rho)},
which, by Corollary \ref{cor:equivalence for meander}, is a tree
whenever $M\left(\pi,\rho\right)$ is a meander. When $\pi$ and $\rho$
have order $n$, the graph has $n$ edges labeled by the elements
$\left\{ 1,\ldots,n\right\} $, whence $\pi$ and $\rho$ can be completely
recovered from $\Gamma\left(\pi,\rho\right)$. This section shows
that when $M\left(\pi,\rho\right)$ is a meander with shallow top,
one can ``forget'' the edge-labels of the tree $\Gamma\left(\pi,\rho\right)$,
record, instead, smaller amount of information, and still recover
$\pi$ and $\rho$. Moreover, there is a bijection between trees with
this type of information and the set $\mst=\bigcup_{n=1}^{\infty}\mst\left(n\right)$\marginpar{$\mst$}
of meanders with shallow top.

Recall that the vertices of $\Gamma\left(\pi,\rho\right)$ are colored
black for the blocks of $\pi$ (in the upper half-plane) and white
for the blocks of $\rho$ (in the lower half-plane). It is also a
\emph{fat-tree}\marginpar{fat-tree}, in the sense that it comes with
a particular embedding in the plane, which means that in every vertex
there is a cyclic order on the edges emanating from it (graphs with
this extra information of cyclic orders at every vertex are called
\emph{ribbon graphs}, \emph{fat graphs}, or \emph{cyclic graphs}).
Of course, the cyclic order can be derived from the labels of the
edges: if the edges around a black vertex are labeled $i_{1}<i_{2}<\ldots<i_{k}$,
this is exactly the cyclic order, counterclockwise; if the vertex
is white, this is the cyclic order clockwise.

Now assume that $\pi$ is an interval partition and $M\left(\pi,\rho\right)$
a meander. We then keep the following data from $\Gamma\left(\pi,\rho\right)$: 
\begin{itemize}
\item We record the block of $\pi$ containing the element $1$. We do so
by marking the corresponding black vertex as the root of the tree. 
\item We record the cyclic order at every vertex. 
\item We also record, for every black vertex, the edge with the smallest
label. Namely, there is a special edge for every black vertex. 
\end{itemize}
As for the last piece of data, recall that every black vertex of $\Gamma\left(\pi,\rho\right)$
is a block of $\pi$, so the labels of the edges emanating from it
form an interval in $\left\{ 1,\ldots,n\right\} $. The edge with
the smallest label is, therefore, the beginning point of the interval.

Formally, the resulting tree belongs to the following set of fat-trees: 
\begin{defn}
\label{def:set--of-trees}Let $\fT$\marginpar{$\fT$} denote the
set of finite rooted fat-trees with the following additional data: 
\begin{itemize}
\item The vertices of every $T\in\fT$ are properly colored by black and
white, with the root colored black\footnote{Namely, every edge of the tree has one black endpoint and one white
endpoint. Of course, there is only one way to properly color by black
and white the vertices of a tree with a black root, so there is no
real information here. The coloring is used merely to facilitate the
reference to one of the two subsets of vertices.}. 
\item For every black vertex, one of the edges emanating from it is marked
as special. 
\end{itemize}
The forgetful map\marginpar{$\forget$} 
\[
\forget\colon\mst\to\fT
\]
is defined as above: for $M\left(\pi,\rho\right)\in\mst$, construct
the tree $\Gamma\left(\pi,\rho\right)$ with its embedding in the
plane, mark the black vertex containing $1$ as the root, record the
edge with smallest label at every black vertex, and ``forget'' the
labels of the edges. 
\end{defn}

\begin{thm}
\label{thm:bijection}The map $\forget\colon\mst\to\fT$ is a bijection. 
\end{thm}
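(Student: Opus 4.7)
I would prove bijectivity by constructing an explicit inverse $\Psi: \fT \to \mst$. The starting observation is that if $M(\pi,\rho) \in \mst$ with $\pi \in \Int(n)$, then at every black vertex $v$ of $\Gamma(\pi,\rho)$, the edges are labeled by consecutive integers $\{a,a+1,\ldots,b\}$ (namely, the block of $\pi$ corresponding to $v$), with the smallest label $a$ on the special edge and labels increasing counterclockwise around $v$. Consequently, the labels of all edges at $v$ are determined once we know $a=a(v)$; and the numbers $a(v)$ are in turn determined by the left-to-right ordering $v_1=\mathrm{root},v_2,\ldots,v_k$ of the black vertices, via the recursion $a(v_1)=1$ and $a(v_{j+1})=a(v_j)+\deg(v_j)$.

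The construction of $\Psi$ therefore reduces to intrinsically reading off, from the data of $T$ alone, the left-to-right ordering of the black vertices. I would extract this via the planar contour walk: starting just outside the special edge of the root and walking counterclockwise around the boundary of a thin tubular neighborhood of $T$, one encounters the $n$ edges twice each in an order determined by the fat-tree structure; let $v_1,v_2,\ldots,v_k$ be the black vertices in the order of their first visits. Label the edges at each $v_j$ by the consecutive integers $a(v_j),\,a(v_j)+1,\,\ldots,\,a(v_j)+\deg(v_j)-1$ counterclockwise from the special edge. This labels all $n$ edges. Define $\pi$ (respectively $\rho$) to be the partition of $\{1,\ldots,n\}$ whose blocks are the edge-label sets at the black (respectively white) vertices of the now-labeled tree, and set $\Psi(T):=M(\pi,\rho)$.

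Several consistency checks are then required: $\pi$ is an interval partition by construction; $\rho$ is non-crossing, which is equivalent to the statement that at each white vertex the edge labels appear in clockwise cyclic order; the labeled tree is exactly $\Gamma(\pi,\rho)$, so by Corollary \ref{cor:equivalence for meander} we have $M(\pi,\rho)\in\mst$; and finally $\forget\circ\Psi=\id_{\fT}$ and $\Psi\circ\forget=\id_{\mst}$. The last two identities are essentially formal once the labeling procedure is shown to be well-defined and to match the uniqueness argument of the first paragraph. The main obstacle is the white-vertex consistency check. To prove it I would identify the $2n$ ``corners'' visited by the contour walk of $T$ with the $2n$ marked points $p_1,\ldots,p_{2n}$ on the horizontal line $L$ in the meander picture, and show that under this identification the clockwise cyclic order of the edges around a white vertex $w$ of $T$ matches the left-to-right order on $L$ of the elements of the block of $\rho$ associated with $w$. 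This planar-topological argument — reconciling the intrinsic contour walk on $T$ with the extrinsic left-to-right reading along $L$ — is the heart of the proof; once established, all remaining verifications reduce to bookkeeping of edge labels under $\forget$.
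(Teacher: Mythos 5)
Your overall strategy (build an explicit inverse of $\forget$ by recovering the edge labels, which reduces to recovering the left-to-right order of the black vertices) coincides with the paper's, but the specific recovery rule you propose is false, and it is precisely the crux of the theorem. You claim that the left-to-right order of the black vertices is the order of their first visits along a contour walk of $T$ started at the root. Any such walk visits a black vertex $b$ before it visits any descendant of $b$: the walk can only enter the subtree hanging below $b$ through the edge joining $b$ to its parent, and the first corner it meets there lies at $b$. However, in a meander with shallow top a black vertex can be preceded, in the left-to-right order, by some of its own descendants; this happens exactly when the edge joining $b$ to its parent is not the smallest-labelled edge at $b$. Concretely, take $n=5$, $\pi=\{\{1\},\{2\},\{3,4\},\{5\}\}\in\Int(5)$ and $\rho=\{\{1,4,5\},\{2,3\}\}\in\nc(5)$. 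Then $|\pi|+|\rho|=6$ and $\pi\veetild\rho=1_{5}$, so $M(\pi,\rho)\in\mst(5)$ and $\Gamma(\pi,\rho)$ is a tree, in which the path from the block $\{2\}$ to the root $\{1\}$ is $\{2\}-\{2,3\}-\{3,4\}-\{1,4,5\}-\{1\}$. Thus $\{2\}$ is a descendant of $\{3,4\}$, yet it precedes $\{3,4\}$ from left to right. Applied to $\forget\bigl(M(\pi,\rho)\bigr)$, your procedure outputs the order $\{1\},\{3,4\},\{2\},\{5\}$ on the black vertices, reconstructs the wrong interval partition $\{\{1\},\{2,3\},\{4\},\{5\}\}$, and so $\Psi\circ\forget\neq\id_{\mst}$.

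The correct recovery rule is genuinely non-monotone with respect to the tree's root-to-leaf structure: at a non-root black vertex $b$ with edges $\vec{e}_{1},\ldots,\vec{e}_{k}$ listed in the order of the interval $b$ represents and with $\vec{e}_{j}$ pointing toward the root, the black vertices of $T_{\vec{e}_{1}},\ldots,T_{\vec{e}_{j-1}}$ come \emph{before} $b$ and those of $T_{\vec{e}_{j+1}},\ldots,T_{\vec{e}_{k}}$ come after, in reverse order of the edges; a companion rule holds at white vertices. Establishing these facts is the content of Lemmas \ref{lem:black vertices} and \ref{lem:white vertex}, which rest on Proposition \ref{prop:d_H for interval partitions} (the identity $\pi\vee\rho=\pi\veetild\rho$ for $\pi\in\intc(n)$). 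Your write-up contains no substitute for these lemmas: you yourself defer the reconciliation of the intrinsic order on $T$ with the left-to-right order on $L$ as ``the heart of the proof,'' and the one concrete reconciliation you do propose is refuted by the example above. The surjectivity half of your plan (checking that the reconstructed $\rho$ is non-crossing and that the labelled tree is really $\Gamma(\pi,\rho)$) is structurally in line with the paper's argument, but it too depends on first having the correct ordering rule.
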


In the proof of the theorem we use the following two key lemmas which
capture the crucial property of the tree $\forget\left(M\left(\pi,\rho\right)\right)$
when $\pi$ is an interval partition. In the lemmas, we use the notion
of a \emph{subtree} of a tree $T$ spanned by an oriented edge: if
$\vec{e}=\left(u,v\right)$ is an edge $e$ of $T$ with some orientation,
we let \marginpar{$T_{\vec{e}}$}$T_{\vec{e}}$ denote the subtree
which is the connected component of $v$, the head of $\overrightarrow{e}$,
in $T\setminus\left\{ e\right\} $. Note that $T_{\vec{e}}$ does
not contain the edge $e$ itself. 
\begin{lem}
\label{lem:black vertices}Let $\left(\pi,\rho\right)\in\intc\left(n\right)\times\nc\left(n\right)$
define a meander, so $T:=\Gamma\left(\pi,\rho\right)$ is a tree.
Let $b$ be some black vertex in $T$ with emanating edges, in counterclockwise
order, $\vec{e}_{1},\ldots,\vec{e}_{k}$. Then for $i=1,\ldots,k$,
the edge-labels in the subtree $T_{\vec{e}_{i}}$ form a cyclic interval\footnote{A cyclic interval is a cyclic shift of an interval.}
in $\left\{ 1,\ldots,n\right\} $, and up to cyclic shift, 
\begin{equation}
\left\{ e_{1},\ldots,e_{k}\right\} <E\left(T_{\vec{e}_{k}}\right)<E\left(T_{\vec{e}_{k-1}}\right)<\ldots<E\left(T_{\vec{e}_{1}}\right),\label{eq:order on subsets of edges in a black vertex}
\end{equation}
where $E\left(G\right)$ marks the set of edges of the graph $G$,
two edges are compared according to their labels, and an inequality
between sets $A<B$ means that $a<b$ for every $a\in A,~b\in B$. 
\end{lem}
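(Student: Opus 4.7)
The plan relies on the planar embedding of $T = \Gamma(\pi, \rho)$ together with the nested arch structure of $\partial b$ that follows from $b$ being a cyclic-interval block. First, by a cyclic shift of the labels $\{1, \ldots, n\}$, I would reduce to the case in which $b$'s block is the linear interval $\{1, 2, \ldots, k\}$, so that $e_i$ is the edge labeled $i$ and $(e_1, \ldots, e_k)$ is the counterclockwise cyclic order at $b$. The claim then becomes: $E(T_{\vec{e}_k}), E(T_{\vec{e}_{k-1}}), \ldots, E(T_{\vec{e}_1})$ occupy successive (possibly empty) contiguous subintervals of $\{k+1, \ldots, n\}$ in this order.

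Next I would pass to the doubled representation, viewing $M(\pi, \rho)$ as a simple closed curve crossing the horizontal line at $2n$ points $1^L, 1^R, \ldots, n^L, n^R$. Since $b$'s block is the interval $\{1, \ldots, k\}$, the arches of $\partial b$ in the upper half plane consist of the outer arch $(1^L, k^R)$ together with the $k-1$ inner arches $(i^R, (i+1)^L)$ for $i = 1, \ldots, k-1$. Each such arch corresponds to a corner of $b$ in $T$'s planar embedding, and a direct analysis of the face walk at $b$ (at each visit, enter via some edge and exit via the next clockwise edge at the vertex) shows that these corners are visited in the cyclic order: outer corner first, then the corners for inner arches $((k-1)^R, k^L), ((k-2)^R, (k-1)^L), \ldots, (1^R, 2^L)$. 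The meander curve, being the boundary of a small fattening of $T$, traces out the face walk; between two consecutive arches of $\partial b$ in this traversal, it traces a sub-arc $M_{\vec{e}_j}$ from $j^R$ to $j^L$ that corresponds in $T$ to the face-walk traversal of $T_{\vec{e}_j}$. These sub-arcs appear in the cyclic order $M_{\vec{e}_k}, M_{\vec{e}_{k-1}}, \ldots, M_{\vec{e}_1}$, and each $M_{\vec{e}_j}$ crosses the line exactly at the doubled points $\{p^L, p^R : p \in E(T_{\vec{e}_j})\}$.

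The geometric heart of the proof is to show that each $E(T_{\vec{e}_j})$ is contiguous and that the $E(T_{\vec{e}_j})$'s partition $\{k+1, \ldots, n\}$ in the stated order. I would close each sub-arc $M_{\vec{e}_j}$ into a simple closed curve $C_{\vec{e}_j}$ by adjoining a tiny arc from $j^L$ to $j^R$ just above the horizontal line (lying in the small region between $\partial b$'s outer arch and the line, crossing no other part of $\partial \pi \cup \partial \rho$). By the non-self-crossing property of $M(\pi, \rho)$, the curve $C_{\vec{e}_j}$ is a Jordan curve, and by the Jordan curve theorem its enclosed region meets the line precisely in the doubled points of $\{j\} \cup E(T_{\vec{e}_j})$. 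Because the labels $1, \ldots, k$ belong to the removed edges $e_1, \ldots, e_k$ and therefore cannot lie in any $E(T_{\vec{e}_j})$, and because distinct sub-arcs have disjoint enclosed regions by non-crossing, the sets $E(T_{\vec{e}_k}), E(T_{\vec{e}_{k-1}}), \ldots, E(T_{\vec{e}_1})$ are successive (possibly empty) contiguous subintervals of $\{k+1, \ldots, n\}$ occurring in this order along the meander curve.

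The main obstacle is the contiguity assertion: showing that the Jordan region for each $C_{\vec{e}_j}$ meets the line in a single contiguous arc of positions rather than in several disconnected pieces. This is precisely the point where the hypothesis that $b$'s block is an interval is used in an essential way, since the outer arch of $\partial b$ then cleanly separates the positions $1, \ldots, k$ from all the rest; a short induction on the number of edges of the subtree, exploiting the nested-arches picture of an arbitrary non-crossing partition in a half plane, makes the contiguity claim rigorous.
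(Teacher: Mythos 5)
Your geometric setup (the meander as the boundary face-walk of the fattened tree, and the cyclic order in which the sub-arcs $M_{\vec{e}_k},\ldots,M_{\vec{e}_1}$ are traversed) is sound, but the argument has a genuine gap at exactly the point you flag as ``the main obstacle'': the contiguity of each $E(T_{\vec{e}_j})$. This is not a detail to be deferred --- it is half of the statement of the lemma, and the displayed chain of inequalities presupposes it. The Jordan curve step does not deliver it: the interior of $C_{\vec{e}_j}$ meets the line in a union of open intervals which can perfectly well contain doubled points belonging to other subtrees, so nothing forces the crossing positions of $M_{\vec{e}_j}$ to be consecutive. Worse, the reason you offer --- that the outer arch of $\partial b$ separates $1,\ldots,k$ from the rest --- uses only the hypothesis that the block $b$ itself is an interval, and that is demonstrably insufficient. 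Take $n=8$, $\pi=\{\{1,3\},\{2\},\{4,5\},\{6\},\{7\},\{8\}\}$ and $\rho=\{\{1\},\{3,4\},\{2,5,6,7,8\}\}$: both are non-crossing, $|\pi|+|\rho|=9$, and $\Gamma(\pi,\rho)$ is connected with $9$ vertices and $8$ edges, hence a tree, so $M(\pi,\rho)$ is a meander. The black vertex $b=\{4,5\}$ is an interval, yet the subtree hanging off edge $4$ has edge set $\{1,3\}$, which is not a cyclic interval --- the failure is caused by the non-interval block $\{1,3\}$ sitting deeper inside that subtree. Any correct argument must therefore propagate the interval hypothesis through every black vertex of $T_{\vec{e}_j}$, and the ``short induction \ldots exploiting the nested-arches picture'' that is supposed to accomplish this is precisely the content of the lemma; as written it is an assertion, not a proof.

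For contrast, the paper sidesteps any direct proof of contiguity. It reduces to consecutive edges $\vec{e}_i,\vec{e}_{i+1}$ at $b$ (whose labels $j$ and $j+1$ are consecutive mod $n$ precisely because $b$ is a cyclic interval), replaces $\pi$ by the partition $\pi'\in\intc(n)$ obtained by breaking $b$ into singletons, and invokes Proposition \ref{prop:d_H for interval partitions} to conclude that $\pi'\veetild\rho$ is non-crossing. Since $E(T_{\vec{e}_i})\cup\{e_i\}$ and $E(T_{\vec{e}_{i+1}})\cup\{e_{i+1}\}$ are distinct blocks of $\pi'\veetild\rho$ containing $j$ and $j+1$ respectively, non-crossingness forces $\{j,j+1\}<B_2\setminus\{j+1\}<B_1\setminus\{j\}$ up to cyclic shift, and the cyclic-interval property of each $E(T_{\vec{e}_j})$ then falls out for free because these disjoint, cyclically ordered sets exhaust all the edges. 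To salvage your topological route you would need to prove something equivalent to Proposition \ref{prop:d_H for interval partitions}, or carry out a genuine induction over the subtree that uses the interval property of every black block it contains --- considerably more than a routine verification.
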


\begin{proof}
It is enough to prove that if $\vec{e}_{i}$ and $\vec{e}_{i+1}$
emanate from $b$ with labels $j$ and $\left(j+1\right)\mod n$,
respectively, then up to a cyclic shift, 
\begin{equation}
\left\{ e_{1},\ldots,e_{k}\right\} <E\left(T_{\vec{e}_{i+1}}\right)<E\left(T_{\vec{e_{i}}}\right).\label{eq:partial order}
\end{equation}
Indeed, the cyclic order in \eqref{eq:order on subsets of edges in a black vertex}
easily follows from \eqref{eq:partial order}. But the sets of edges
in \eqref{eq:order on subsets of edges in a black vertex} are disjoint
sets which exhaust the entire edges of the tree, so every subset in
\eqref{eq:order on subsets of edges in a black vertex} must form
a cyclic interval.

To prove the claim about $\vec{e}_{i}$ and $\vec{e}_{i+1}$, consider
$\pi'$, the non-crossing partition in $\intc\left(n\right)$ obtained
from $\pi$ by breaking the block $b$ into singletons. The edge-labels
in $T_{\vec{e}_{i}}\cup\left\{ e_{i}\right\} $ and those in $T_{\vec{e}_{i+1}}\cup\left\{ e_{i+1}\right\} $
form two distinct blocks $B_{1}$ and $B_{2}$, respectively, in the
partition $\pi'\veetild\rho$. By Proposition \ref{prop:d_H for interval partitions},
$\pi'\vee\rho=\pi'\veetild\rho$, so, in particular, $\pi'\veetild\rho$
is non-crossing. As $j\in B_{1}$ and $j+1\in B_{2}$, this means
that up to a cyclic shift, 
\[
\left\{ j,j+1\right\} <B_{2}\setminus\left\{ j+1\right\} <B_{1}\setminus\left\{ j\right\} .
\]
Since the labels of $\left\{ e_{1},\ldots,e_{k}\right\} $ form a
cyclic interval containing $\left\{ j,j+1\right\} $, we deduce \eqref{eq:partial order}. 
\end{proof}
\begin{lem}
\label{lem:white vertex}Let $\left(\pi,\rho\right)\in\intc\left(n\right)\times\nc\left(n\right)$
define a meander, so $T:=\Gamma\left(\pi,\rho\right)$ is a tree.
Let $w$ be some black vertex in $T$ with emanating edges, in clockwise
order, $\vec{e}_{1},\ldots,\vec{e}_{k}$. Then for $i=1,\ldots,k$,
the edge-labels of $E\left(T_{\vec{e}_{i}}\right)\cup\left\{ e_{i}\right\} $
form a cyclic interval in $\left\{ 1,\ldots,n\right\} $, and up to
a cyclic shift, 
\begin{equation}
E\left(T_{\vec{e}_{1}}\right)\cup\left\{ e_{1}\right\} <E\left(T_{\vec{e}_{2}}\right)\cup\left\{ e_{2}\right\} <\ldots<E\left(T_{\vec{e}_{k}}\right)\cup\left\{ e_{k}\right\} .\label{eq:order on subtrees from a white vertex}
\end{equation}
\end{lem}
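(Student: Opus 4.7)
My plan is to mirror the proof of Lemma \ref{lem:black vertices}, but operating at the white vertex $w$ by invoking Proposition \ref{prop:d_H for interval partitions}. Let $W=\{\ell_{1},\ldots,\ell_{k}\}$ denote the block of $\rho$ corresponding to $w$, with $\ell_{i}$ the label of $e_{i}$. By the clockwise convention at white vertices, a cyclic relabeling lets us assume $\ell_{1}<\ell_{2}<\cdots<\ell_{k}$ in the standard linear order on $\{1,\ldots,n\}$. Write $B_{i}:=E(T_{\vec{e}_{i}})\cup\{e_{i}\}$, regarded as a subset of $\{1,\ldots,n\}$ via edge-labels; our goal is to show each $B_{i}$ is a cyclic interval and that, up to cyclic shift, $B_{1}<B_{2}<\cdots<B_{k}$.

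The key device will be a ``split-$W$-at-a-cyclic-gap'' construction. Given any partition $W=W_{1}\sqcup W_{2}$ into two cyclically-consecutive arcs, let $\rho^{W_{1},W_{2}}$ be obtained from $\rho$ by replacing $W$ with the two blocks $W_{1},W_{2}$. First I would check directly that $\rho^{W_{1},W_{2}}\in\nc(n)$: since $\rho$ is non-crossing and the elements of $W$ appear in their cyclic order on $L$, the blocks $W_{1},W_{2}$ neither cross each other (being disjoint linear arcs, up to cyclic rotation) nor any other block of $\rho$ (any such crossing would already have been a crossing for $W$). Then Proposition \ref{prop:d_H for interval partitions} applies to $(\pi,\rho^{W_{1},W_{2}})\in\intc(n)\times\nc(n)$, yielding $\pi\vee\rho^{W_{1},W_{2}}=\pi\veetild\rho^{W_{1},W_{2}}$. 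By Remark \ref{rem:veetild}, the blocks of this join correspond to the connected components of $\Gamma(\pi,\rho^{W_{1},W_{2}})$, and the latter graph is obtained from $T$ by splitting $w$ into two white vertices carrying the edge sets $\{e_{j}:\ell_{j}\in W_{1}\}$ and $\{e_{j}:\ell_{j}\in W_{2}\}$ respectively. As $T$ is a tree, this split disconnects it into exactly two subtrees, so the two blocks of $\pi\vee\rho^{W_{1},W_{2}}$ are precisely $\bigcup_{\ell_{j}\in W_{1}}B_{j}$ and $\bigcup_{\ell_{j}\in W_{2}}B_{j}$. Since any two-block non-crossing partition of $\{1,\ldots,n\}$ automatically consists of two cyclic intervals, both unions are cyclic intervals.

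The conclusion now follows from two specializations. Taking $W_{1}=\{\ell_{i}\}$ and $W_{2}=W\setminus\{\ell_{i}\}$ shows that each $B_{i}$ is itself a cyclic interval. Taking $W_{1}=\{\ell_{i},\ell_{i+1}\}$ (with indices mod $k$) shows that $B_{i}\cup B_{i+1}$ is a cyclic interval; combined with $B_{i}$ and $B_{i+1}$ each being cyclic intervals, this is possible only if $B_{i}$ and $B_{i+1}$ are cyclically adjacent intervals. Consequently, cyclically the $B_{i}$'s must be arranged either as $B_{1},B_{2},\ldots,B_{k}$ or as its reverse. The correct direction is pinned down by the placement $\ell_{i}\in B_{i}$ together with the linear monotonicity $\ell_{1}<\cdots<\ell_{k}$ that we arranged at the outset, yielding \eqref{eq:order on subtrees from a white vertex} up to a cyclic shift.

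The main obstacle I anticipate is the initial verification that a cyclically-consecutive splitting of $W$ preserves non-crossingness. Once this is secured, Proposition \ref{prop:d_H for interval partitions} carries the bulk of the argument: it keeps the join inside $\nc(n)$ and equal to $\veetild$, which is exactly what lets us identify the blocks of the join with the edge-label sets of subtrees of $T$. The rest of the argument is a clean combinatorial deduction from the structure of two-block non-crossing partitions, in the same spirit as the final paragraph of the proof of Lemma \ref{lem:black vertices}.
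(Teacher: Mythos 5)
Your proof is correct, but it follows a genuinely different route from the paper's. The paper deduces this lemma from Lemma \ref{lem:black vertices} by complementation: for each edge $\vec{e}_{i}$ emanating from the white vertex $w$, the label set $E(T_{\vec{e}_{i}})\cup\{e_{i}\}$ is the complement in $\{1,\ldots,n\}$ of $E(T_{\overleftarrow{e}_{i}})$, which is already known to be a cyclic interval by Lemma \ref{lem:black vertices} applied at the black endpoint of $e_{i}$; the ordering \eqref{eq:order on subtrees from a white vertex} then falls out of the clockwise label convention at white vertices. You instead re-run, at the white vertex, the block-splitting mechanism that the paper uses only inside the proof of Lemma \ref{lem:black vertices}: you split the white block $W$ into two cyclically consecutive arcs, check that $\rho^{W_{1},W_{2}}$ stays in $\nc(n)$, and invoke Proposition \ref{prop:d_H for interval partitions} (with $\pi$ unchanged in $\intc(n)$) to force the resulting two-block join to be non-crossing, hence a pair of complementary cyclic intervals. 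Every step checks out: splitting $W$ along a cyclic gap does preserve non-crossingness, the split graph is a forest with exactly two components whose edge-label sets are the stated unions of the $B_{j}$, and a two-block non-crossing partition of $\{1,\ldots,n\}$ is indeed a pair of cyclic intervals. Your version is longer, but it is independent of Lemma \ref{lem:black vertices} and makes the two lemmas pleasingly symmetric (split the black block for one, the white block for the other). Two minor points: the adjacency step via $W_{1}=\{\ell_{i},\ell_{i+1}\}$ is not actually needed, since once each $B_{i}$ is known to be a cyclic interval the ordering already follows from $\ell_{i}\in B_{i}$ and $\ell_{1}<\cdots<\ell_{k}$ (disjoint cyclic intervals partitioning $\{1,\ldots,n\}$ occur in the same cyclic order as any system of representatives); and the degenerate case $k=1$, where no nontrivial split of $W$ is available, should be set aside as vacuous.
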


\begin{proof}
For every $\vec{e}$ with white tail, the labels of $E\left(T_{\vec{e}}\right)\cup\left\{ e\right\} $
form a cyclic interval because they are the complement in $\left\{ 1,\ldots,n\right\} $
of the labels of $E\left(T_{\overleftarrow{e}}\right)$ which form
a cyclic interval by Lemma \ref{lem:black vertices}. But by the assumptions,
the cyclic order on $e_{1},\ldots,e_{k}$ means that up to a cyclic
shift $e_{1}<e_{2}<\ldots<e_{k}$, which yields \eqref{eq:order on subtrees from a white vertex}. 
\end{proof}
\begin{proof}[Proof of Theorem \ref{thm:bijection}]
We need to show that $\forget$ is injective and surjective.

\noindent \textbf{Injectivity}

\noindent We start with injectivity, namely, we need to show that
if the pair $\left(\pi,\rho\right)\in\Int\left(n\right)\times\nc\left(n\right)$
corresponds to a meander with shallow top, then we can recover $\pi$
and $\rho$ from $T:=\forget\left(M\left(\pi,\rho\right)\right)$.
What we need to recover is the labels of the edges. Of course, $n$
is the number of edges of $T$. The edges emanating from every black
vertex form an interval, and the order inside the interval is known
thanks to the marked edge in every black vertex in $T$. Hence, it
is enough to recover the order in which the different intervals lie
in $\left\{ 1,\ldots,n\right\} $, namely, to recover the order induced
on the black vertices. We let $B\left(T\right)$ denote the set of
black vertices, and $\prec$ denote the order on $B\left(T\right)$,
so $\left(B\left(T\right),\prec\right)$ is an ordered set.

Lemmas \ref{lem:black vertices} and \ref{lem:white vertex} yield
that for every oriented edge $\vec{e}$, be its head black or white,
the set $B\left(T_{\vec{e}}\right)$ of black vertices in $T_{\vec{e}}$
form a cyclic interval in $\left(B\left(T\right),\prec\right)$. Moreover,
if $\vec{e}$ points away from the root of $T$, then $B\left(T_{\vec{e}}\right)$
forms an actual interval (rather than a cyclic interval), as $B\left(T_{\vec{e}}\right)$
does not contain the smallest black vertex which is the root.

Thus, to fully recover the order on the black vertices of $T$, it
is enough to determine the following: 
\begin{itemize}
\item for every black vertex $b$ with emanating edges away from the root
$\vec{e}_{1},\ldots,\vec{e}_{k}$, the relative order among $\left\{ b\right\} ,B\left(T_{\vec{e}_{1}}\right),\ldots,B\left(T_{\vec{e}_{k}}\right)$ 
\item for every white vertex $w$ with emanating edges away from the root
$\vec{e}_{1},\ldots,\vec{e}_{k}$, the relative order among $B\left(T_{\vec{e}_{1}}\right),\ldots,B\left(T_{\vec{e}_{k}}\right)$ 
\end{itemize}
The above lemmas yield the following solution to this task: 
\begin{itemize}
\item If $b$ is the root of $T$, and the edges emanating from it in order
are $\vec{e}_{1},\ldots,\vec{e}_{k}$, then by Lemma \ref{lem:black vertices},
the required order is 
\[
\left\{ b\right\} \prec B\left(T_{\vec{e}_{k}}\right)\prec B\left(T_{\vec{e}_{k-1}}\right)\prec\ldots\prec B\left(T_{\vec{e}_{1}}\right).
\]
\item If $b$ is a non-root black vertex of $T$ and the edges emanating
from it in the order of the interval $b$ represents are $\vec{e}_{1},\ldots,\vec{e}_{k}$,
with $\vec{e}_{j}$ pointing to the root, the required order is 
\[
B\left(T_{\vec{e}_{j-1}}\right)\prec B\left(T_{\vec{e}_{j-2}}\right)\prec\ldots\prec B\left(T_{\vec{e}_{1}}\right)\prec\left\{ b\right\} \prec B\left(T_{\vec{e}_{k}}\right)\prec B\left(T_{\vec{e}_{k-1}}\right)\prec\ldots\prec B\left(T_{\vec{e}_{j+1}}\right),
\]
because Lemma \ref{lem:black vertices} gives the order up to a cyclic
shift and $B\left(T_{\vec{e}_{j}}\right)$ contains the root which
is the smallest black vertex. 
\item If $w$ is a white vertex of $T$ and the edges emanating from it
clockwise cyclic order are $\vec{e}_{1},\ldots,\vec{e}_{k}$, with
$\vec{e}_{1}$ pointing to the root, the required order is 
\[
B\left(T_{\vec{e}_{2}}\right)\prec B\left(T_{\vec{e}_{3}}\right)\prec\ldots\prec B\left(T_{\vec{e}_{k}}\right),
\]
because Lemma \ref{lem:white vertex} translates to the cyclic order
$B\left(T_{\vec{e}_{1}}\right)\prec B\left(T_{\vec{e}_{2}}\right)\prec\ldots\prec B\left(T_{\vec{e}_{k}}\right)$,
and again $B\left(T_{\vec{e}_{1}}\right)$ contains the root which
is the smallest black vertex. 
\end{itemize}
We illustrate this procedure of recovering the order $\Gamma\left(\pi,\rho\right)$
in Figure \ref{fig:bijection}.

\noindent \textbf{Surjectivity}

\noindent To prove surjectivity of $\forget$, take an arbitrary $T\in\fT$.
Let $n$ denote the number of edges of $T$. We need to show one can
construct $\left(\pi,\rho\right)\in\Int\left(n\right)\times\nc\left(n\right)$
so that $M\left(\pi,\rho\right)\in\mst\left(n\right)$ and $\forget\left(M\left(\pi,\rho\right)\right)=T$.
One can label the edges of $T$ by $\left\{ 1,\ldots,n\right\} $
(in a bijection) by following the ``recovery procedure'' above,
obtaining $T_{\mathrm{colored}}$. The graph $T_{\mathrm{colored}}$
looks like a $\Gamma\left(\pi,\rho\right)$ in terms of the data it
holds, but we still need to show it is actually equal to some $\Gamma\left(\pi,\rho\right)$.

The unique possible candidate for $\pi$ (resp.~$\rho$) is obvious:
this is the partition of $\left\{ 1,\ldots,n\right\} $ defined by
the black (resp.~white) vertices of $T_{\mathrm{colored}}$. It is
clear by the procedure that $\pi\in\Int\left(n\right)$. To see that
$\rho\in\nc\left(n\right)$, let $w_{1}$ and $w_{2}$ be two white
blocks (vertices). We show that $w_{1}$ and $w_{2}$ do not cross.
We separate into three cases: 
\begin{itemize}
\item If $w_{1}$ and $w_{2}$ are both descendants in $T$ of the black
vertex $b$, but one is in $T_{\vec{e}_{1}}$ and one in $T_{\vec{e}_{2}}$
for two distinct edges emanating from $b$, then the edges of $w_{1}$
(resp.~$w_{2}$) are contained in $T_{\vec{e}_{1}}\cup\left\{ e_{1}\right\} $
(resp.~$T_{\vec{e}_{2}}\cup\left\{ e_{2}\right\} $), and the recovery
procedure ensures that the sets $T_{\vec{e}_{1}}\cup\left\{ e_{1}\right\} $
and $T_{\vec{e}_{2}}\cup\left\{ e_{2}\right\} $ lie in two distinct
cyclic intervals, whence they do not cross. 
\item If $w_{1}$ and $w_{2}$ are both proper descendants in $T$ of the
white vertex $w$ but, one is in $T_{\vec{e}_{1}}$ and one in $T_{\vec{e}_{2}}$
for two distinct edges emanating from $w$, then the recovery procedure
ensures that $w_{1}$ and $w_{2}$ are contained in two distinct intervals,
whence they do not cross. 
\item If one of $w_{1},w_{2}$ is a descendant of the other, say without
loss of generality, that $w_{2}$ is a descendant of $w_{1}$, consider
$\vec{e}$, the last edge in the geodesic from $w_{1}$ to $w_{2}$.
The recovery procedure ensures that $\left\{ e\right\} \cup E\left(T_{\vec{e}}\right)$,
which contains $w_{2}$, is contained in an interval disjoint from
$w_{1}$. 
\end{itemize}
Hence $\left(\pi,\rho\right)\in\Int\left(n\right)\times\nc\left(n\right)$.
Finally, it is easy to check that the procedure ensures that the cyclic
order on the edges in every vertex of $T$ matches the cyclic order
induced by the labels in $T_{\mathrm{colors}}$. Hence $\Gamma\left(\pi,\rho\right)$
is the exact same fat-tree (tree with an embedding in the plane) as
$T_{color}$. Thus $M\left(\pi,\rho\right)\in\mst\left(n\right)$
and $\forget\left(M\left(\pi,\rho\right)\right)=T$. 
\end{proof}
\begin{figure}
\centering{}\includegraphics[viewport=170bp 380bp 370bp 500bp,scale=1.05]{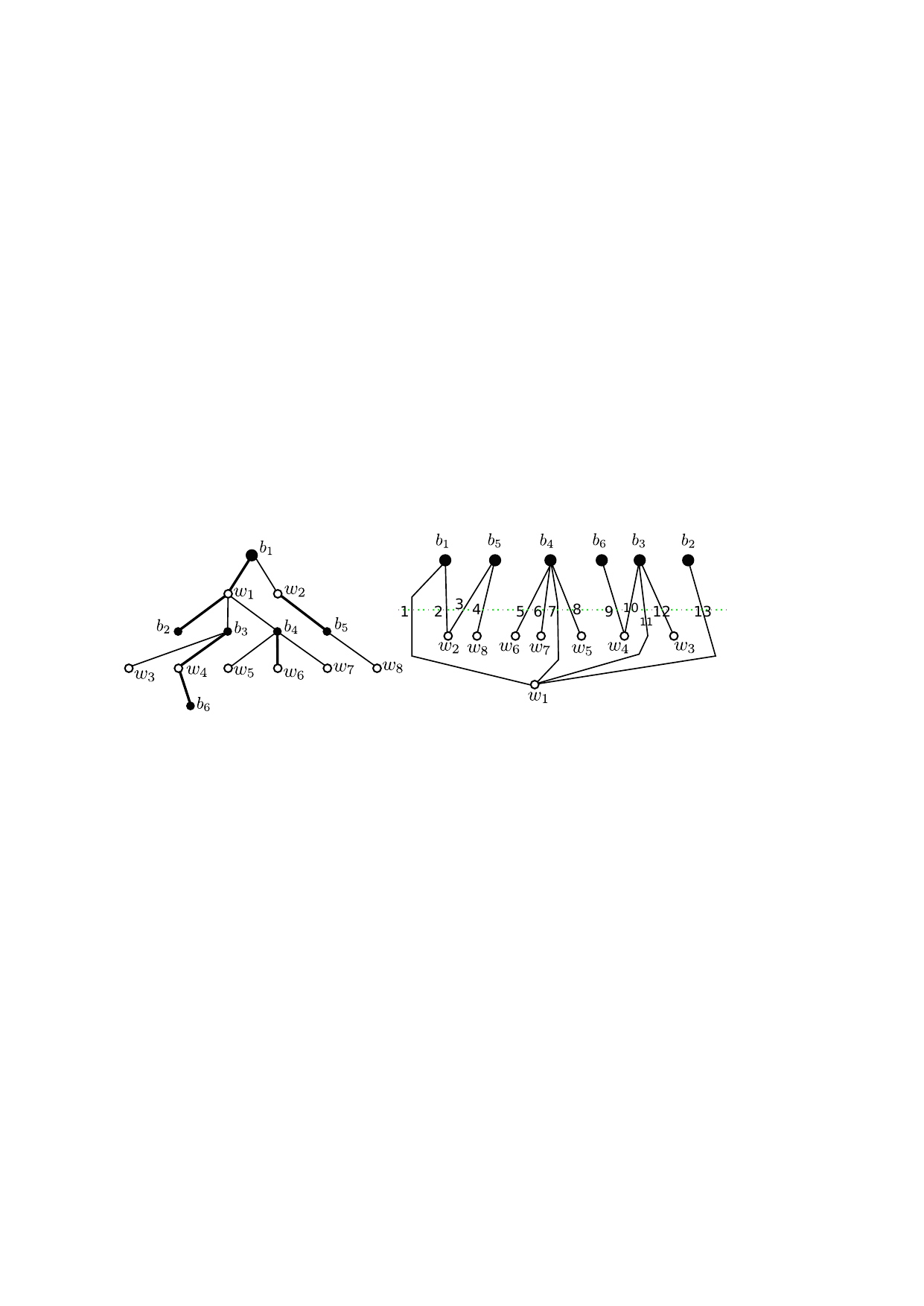}\caption{\label{fig:bijection}The left part is a tree $T\in\fT$ with root
$b_{1}$ and a special edge for every black vertex marked by a thicker
line. The recovery procedure from the proof of Theorem \ref{thm:bijection}
gives, in this case, the following order on the black vertices: by
the root $b_{1}$, we obtain that $\left\{ b_{1}\right\} <\left\{ b_{5}\right\} <\left\{ b_{2},b_{3},b_{4},b_{6}\right\} $,
in $w_{1}$ we get $\left\{ b_{4}\right\} <\left\{ b_{3},b_{6}\right\} <\left\{ b_{2}\right\} $,
and in $b_{3}$ we see that $\left\{ b_{6}\right\} <\left\{ b_{3}\right\} $.
The recovered meander $M\left(\pi,\rho\right)\in\mst\left(13\right)$,
in the shape of the tree $\Gamma\left(\pi,\rho\right)$, is on the
right. We stress that the vertex labels are not part of the data of
$T$ nor of $\Gamma\left(\pi,\rho\right)$: they are given here merely
to clarify the right isomorphism between the two fat-trees.}
\end{figure}

\begin{rem}
\label{rem:bijection for cyclic interval}There is an analog of Theorem
\ref{thm:bijection} for meanders in $\intc\left(n\right)\times\nc\left(n\right)$.
This time, the element $1$ is not always the first element in its
cyclic interval in $\pi$, so we also record the edge of the root
corresponding to $1$. So now the root of every tree in $\fT$ should
have two marked edges: one marked as $1$, and one, possibly the same
one, marked as the smallest in the interval. The proof of the bijection
is almost the same, with small adaptation to the change in the root. 
\end{rem}

\section{{\large{}Enumerative Consequences of the Tree Bijection \label{sec:Enumerative-consequences}}}

In this section, we determine the generating function for the set
of fat-trees $\fT$ defined in Definition \ref{def:set--of-trees}
and deduce the enumerative results of Theorem \ref{thm:main-shallow meanders}.
In order to do so, we define two additional sets of bicolored fat-trees
with additional data. These sets describe proper subtrees of trees
in $\fT$, where a proper subtree here means the subtree of some $T\in\fT$
spanned by a non-root vertex, its \textbf{parent} and all its descendants.
We let \marginpar{$\fT_{w},\fT_{b}$}$\fT_{w}$ ($\fT_{b}$, respectively)
denote the set of possible proper fat-subtrees spanned by a \emph{white
}(respectively, \emph{black}) vertex, its parent and its descendants.
So every $T\in\fT_{w}$ (resp.~$T\in\fT_{b}$) is a fat-tree with
some white (resp.~black) vertex marked as the root and some black
(resp.~white) neighbor of the root, which is a leaf of a tree, marked
as the parent of the root. In addition, every black vertex has one
emanating edge marked as special.

We let $\Phi$, $W$ and $B$ denote the generating functions of the
sets $\fT$, $\fT_{w}$ and $\fT_{b}$, respectively. In the following
generating functions, the exponent of the variable $x$ is the number
of edges (which is $n$), the exponent of the variable $y$ is the
number of black vertices (which is the number of blocks in $\pi\in\Int\left(n\right)$),
the exponent of $z_{k}$ is the number of black vertices of degree
$k$, and the exponent of $w_{k}$ is the number of white vertices
of degree $k$. To obtain this, we think of every black vertex of
degree $k$ as marked by $yx^{k}z_{k}$ and every white vertex of
degree $k$ as marked by $w_{k}$.

Since the root of a tree $T\in\fT_{w}$ has some degree $\ell\ge1$
and $\left(\ell-1\right)$ black children, we obtain 
\begin{equation}
W=\sum_{\ell\ge1}w_{\ell}B^{\ell-1}.\label{for W}
\end{equation}
Analogously, the root of a tree $T\in\fT_{b}$ has some degree $k\ge1$
with $\left(k-1\right)$ white children and $k$ choices for which
edge is special, so

\begin{equation}
B=\sum_{k\ge1}kyx^{k}z_{k}W^{k-1}.\label{for B}
\end{equation}

Similarly, the root of a tree $T\in\fT$ has degree $k$ for some
$k\ge1$, with $k$ white children. Unlike the case of a subtree rooted
at a black vertex, here there is no special edge at the root pointing
to the parent, so all edges at the root look the same and there is
no real choice of which one to mark as special. Thus, we obtain the
following equation for $\Phi$ in terms of $W$: 
\begin{equation}
\Phi=\sum_{k\ge1}yx^{k}z_{k}W^{k}.\label{forPhi}
\end{equation}
Now, substituting for $W$ via~(\ref{for W}) in (\ref{for B}) and
(\ref{forPhi}), we get the pair of functional equations 
\begin{equation}
\Phi=\sum_{k\ge1}yx^{k}z_{k}\left(\sum_{\ell\ge1}w_{\ell}B^{\ell-1}\right)^{k},\qquad\qquad B=y\sum_{k\ge1}kx^{k}z_{k}\left(\sum_{\ell\ge1}w_{\ell}B^{\ell-1}\right)^{k-1}.\label{lagsyst}
\end{equation}
Also, using the notation \marginpar{$\bi,\bj$}$\bi=(i_{1},i_{2},\ldots)$
and $\bj=(j_{1},j_{2},\ldots)$, let \marginpar{$N(m,n;\bi,\bj)$}$N(m,n;\bi,\bj)$
denote the number of meandric pairs $(\pi,\rho)$ in which the interval
partition $\pi\in\Int(n)$ has $m$ blocks, of which $j_{k}$ blocks
have size $k$, $k\ge1$, and $\rho\in\nc(n)$ has $n-m+1$ blocks,
of which $i_{\ell}$ blocks have size $\ell$, $\ell\ge1$. Of course,
the elements of $\bi$ and $\bj$ are subject to the restrictions:
\begin{equation}
\sum_{\ell\ge1}i_{\ell}=n-m+1,\qquad\sum_{\ell\ge1}\ell i_{\ell}=n,\qquad\sum_{k\ge1}j_{k}=m,\qquad\sum_{k\ge1}kj_{k}=n.\label{eq:ijrestns}
\end{equation}
Let \marginpar{$N_{a}(m,n;\bi,\bj)$}$N_{a}(m,n;\bi,\bj)$ denote
the further restricted number of meandric pairs above such that the
block of $\pi$ containing the element $1$ is of size $a$ (so it
is the block $\{1,\ldots,a\}$), where we have $j_{a}\ge1$.

We now determine explicit formulas for the numbers $N(m,n;\bi,\bj)$
and $N_{a}(m,n;\bi,\bj)$ by using the bijection from Theorem \ref{thm:bijection}. 
\begin{prop}
\label{prop:N(m,n,i,j)} For $n\ge m\ge1$, and $\bi,\bj$ satisfying
(\ref{eq:ijrestns}), we have 
\begin{equation}
N(m,n;\bi,\bj)=m!(n-m)!\prod_{\ell\ge1}\frac{1}{i_{\ell}!}\prod_{k\ge1}\frac{k^{j_{k}}}{j_{k}!}\label{eq:Nformula}
\end{equation}
and (where $j_{a}\ge1$), 
\begin{equation}
N_{a}(m,n;\bi,\bj)=j_{a}(m-1)!(n-m)!\prod_{\ell\ge1}\frac{1}{i_{\ell}!}\prod_{k\ge1}\frac{k^{j_{k}}}{j_{k}!}\label{eq:Naformula}
\end{equation}
\end{prop}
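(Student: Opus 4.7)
By the bijection of Theorem~\ref{thm:bijection} between $\mst(n)$ and the fat-trees $\fT$, and in view of the weighting convention by which the variables $y, x, z_k, w_\ell$ record, respectively, the number of black vertices, edges, black vertices of each degree, and white vertices of each degree, we have
\[
N(m, n; \bi, \bj) = [y^m x^n \textstyle\prod_k z_k^{j_k} \prod_\ell w_\ell^{i_\ell}]\, \Phi.
\]
The problem thus reduces to extracting this coefficient from the generating function $\Phi$ defined implicitly by the system~(\ref{lagsyst}), and the plan is to apply Lagrange inversion.

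Write $\Phi = y\, G(B)$ with $G(B) = \sum_{k \ge 1} x^k z_k W(B)^k$ and $W(B) = \sum_{\ell \ge 1} w_\ell B^{\ell-1}$, and note that $B = y\, F(B)$ where $F(B) = \sum_{k \ge 1} k x^k z_k W(B)^{k-1}$. The key observation is the identity $G'(B) = F(B)\, W'(B)$. Lagrange inversion then gives, for $m \ge 2$,
\[
[y^m]\,\Phi \;=\; [y^{m-1}]\,G(B) \;=\; \frac{1}{m-1}\,[B^{m-2}]\, F(B)^m\, W'(B),
\]
the case $m = 1$ being handled separately by evaluating at $B = 0$, which yields $G(0) = \sum_k x^k z_k w_1^k$ and matches (\ref{eq:Nformula}) directly.

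The remaining coefficient extraction is done in two multinomial steps. First, expanding $F(B)^m$ and using the constraints $\sum_k j_k = m$, $\sum_k k j_k = n$, one obtains
\[
[x^n \textstyle\prod_k z_k^{j_k}]\, F(B)^m \;=\; \frac{m!}{\prod_k j_k!}\, \prod_k k^{j_k}\cdot W(B)^{n-m}.
\]
Second, expanding $W(B)^{n-m} W'(B)$ and using $\sum_\ell i_\ell = n-m+1$, $\sum_\ell \ell i_\ell = n$, one obtains $[B^{m-2} \prod_\ell w_\ell^{i_\ell}] W(B)^{n-m} W'(B) = \binom{n-m+1}{\bi}\,\frac{m-1}{n-m+1}$, where the fraction $\tfrac{m-1}{n-m+1}$ appears by symmetrizing over the $n-m+1$ positions from which the derivative $W'(B)$ may be taken, combined with $\sum_\ell (\ell-1) i_\ell = n - (n-m+1) = m-1$. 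Combining the two expansions yields (\ref{eq:Nformula}).

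For $N_a(m, n; \bi, \bj)$, rather than redoing the Lagrange inversion with the root degree fixed, I would invoke a symmetry argument. An element of $\fT$ is specified by a plane bicolored graph with its special-edge marks, plus a choice of root among its $m$ black vertices; by Theorem~\ref{thm:bijection} each such root choice produces a \emph{distinct} meandric pair in $\mst(n)$ with the same degree sequences $\bi, \bj$. Hence letting $\widetilde{N}$ denote the number of un-rooted such structures, one has $N(m, n; \bi, \bj) = m\,\widetilde{N}$ and $N_a(m, n; \bi, \bj) = j_a\,\widetilde{N}$, so $N_a = \tfrac{j_a}{m} N$ and (\ref{eq:Naformula}) follows. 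The main technical hurdle is the Lagrange-inversion step; the identity $G' = F\,W'$ is what makes the coefficient extraction tractable, after which only bookkeeping with multinomials remains.
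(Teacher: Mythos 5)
Your proof is correct and follows essentially the same route as the paper: the identification of $N(m,n;\bi,\bj)$ as a coefficient of $\Phi$, Lagrange inversion applied to the system (\ref{lagsyst}) via the chain-rule identity $G'=F\,W'$, the same two multinomial extractions, and the reduction $N_{a}=\frac{j_{a}}{m}N$ (which the paper deduces from cyclic symmetry of $\{1,\ldots,n\}$ \textendash{} the same symmetry your re-rooting argument expresses through the tree bijection). One small caveat on that last step: an unrooted decorated fat-tree may have a nontrivial cyclic automorphism group (e.g.\ a single white vertex joined to $m$ black leaves), so the claims that $N=m\widetilde{N}$ and that each root choice yields a \emph{distinct} meandric pair are not literally true; however, because such automorphisms preserve degrees and act freely on the black vertices (a nontrivial automorphism fixing a black vertex would have to fix its special edge and hence be the identity), $N$ and $N_{a}$ are divided by the same automorphism order orbit-by-orbit, so the ratio $N_{a}/N=j_{a}/m$ and hence (\ref{eq:Naformula}) survive.
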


\begin{proof}
For the first part of the result, if $m=1$, the restrictions \eqref{eq:ijrestns}
yield that $\bj=\left(0,\ldots0,1,0,\ldots\right)$ with $j_{n}=1$,
and that $\bi=\left(n,0,0,\ldots\right)$. Then the formula \eqref{eq:Nformula}
gives $1$ which is easily seen to be equal to $N\left(1,n;\bi,\bj\right)$.
Now assume $m\ge2$. From the bijection from Theorem \ref{thm:bijection},
$N(m,n;\bi,\bj)$ is equal to the number of fat-trees in $\fT$ with
$i_{\ell}$ white vertices of degree $\ell$ and $j_{\ell}$ black
vertices of degree $\ell$, $\ell\ge1$. Thus, using the notation
$\bw^{\bi}=w_{1}^{i_{1}}w_{2}^{i_{2}}\cdots$ and $\bz^{\bj}=z_{1}^{j_{1}}z_{2}^{j_{2}}\cdots$,
we conclude that 
\[
N(m,n;\bi,\bj)=[y^{m}x^{n}\bw^{\bi}\bz^{\bj}]\Phi,
\]
where we also use here the notation $[F]G$ to denote the \emph{coefficient}
of the monomial $F$ in the expansion of the formal power series $G$.
This is because $\Phi$ has been defined to be precisely the appropriate
generating function.

Now we can determine $N:=N\left(m,n;\bi,\bj\right)$ by applying Lagrange's
Implicit Function Theorem to solve the equations in \eqref{lagsyst}
(see e.g.~\cite[Theorem 1.2.4(1), Page 17]{goulden1983combinatorial})
via the following calculation. Recall that $m\ge2$: 
\begin{eqnarray}
N & = & \left[y^{m}x^{n}\bw^{\bi}\bz^{\bj}\right]\sum_{k\ge1}yx^{k}z_{k}\left(\sum_{\ell\ge1}w_{\ell}B^{\ell-1}\right)^{k}=\left[y^{m-1}x^{n}\bw^{\bi}\bz^{\bj}\right]\sum_{k\ge1}x^{k}z_{k}\left(\sum_{\ell\ge1}w_{\ell}B^{\ell-1}\right)^{k}\nonumber \\
 & = & \frac{1}{m-1}\left[\la^{m-2}x^{n}\bw^{\bi}\bz^{\bj}\right]\frac{d}{d\lambda}\left[\sum_{k\ge1}x^{k}z_{k}\left(\sum_{\ell\ge1}w_{\ell}\la^{\ell-1}\right)^{k}\right]\left(\sum_{k\ge1}kx^{k}z_{k}\left(\sum_{\ell\ge1}w_{\ell}\lambda{}^{\ell-1}\right)^{k-1}\right)^{m-1}\nonumber \\
 & = & \frac{1}{m-1}\left[\la^{m-2}x^{n}\bw^{\bi}\bz^{\bj}\right]\frac{d}{d\lambda}\left(\sum_{\ell\ge1}w_{\ell}\lambda^{\ell-1}\right)\left(\sum_{k\ge1}kx^{k}z_{k}\left(\sum_{\ell\ge1}w_{\ell}\lambda{}^{\ell-1}\right)^{k-1}\right)^{m}.\label{eq:step I for N}
\end{eqnarray}
We now compute the coefficient of $\left[x^{n}\bz^{\bj}\right]$ in
\eqref{eq:step I for N}. There are exactly $\binom{m}{j_{1},j_{2},\ldots}=\frac{m!}{j_{1}!j_{2}!\ldots}$
ways to pick exactly $j_{r}$ times the summand containing $z_{r}$
for every $r\ge1$, and then the coefficient of $x$ is exactly $\sum_{r\ge1}rj_{r}=n$,
as required. So 
\begin{eqnarray*}
N & = & \frac{m!}{m-1}\prod_{k\ge1}\frac{k^{j_{k}}}{j_{k}!}\left[\lambda^{m-2}\bw^{\bi}\right]\frac{d}{d\lambda}\left(\sum_{\ell\ge1}w_{\ell}\lambda^{\ell-1}\right)\left(\sum_{\ell\ge1}w_{\ell}\lambda{}^{\ell-1}\right)^{n-m}\\
 & = & \frac{m!}{m-1}\prod_{k\ge1}\frac{k^{j_{k}}}{j_{k}!}\left[\lambda^{m-2}\bw^{\bi}\right]\frac{1}{n-m+1}\cdot\frac{d}{d\lambda}\left(\sum_{\ell\ge1}w_{\ell}\lambda^{\ell-1}\right)^{n-m+1}\\
 & \stackrel{\left(*\right)}{=} & \frac{m!}{n-m+1}\prod_{k\ge1}\frac{k^{j_{k}}}{j_{k}!}\left[\lambda^{m-1}\bw^{\bi}\right]\left(\sum_{\ell\ge1}w_{\ell}\la^{\ell-1}\right)^{n-m+1}\\
 & = & \frac{m!}{n-m+1}\prod_{k\ge1}\frac{k^{j_{k}}}{j_{k}!}\binom{n-m+1}{i_{1},i_{2},\ldots}=m!\left(n-m\right)!\prod_{k\ge1}\frac{k^{j_{k}}}{j_{k}!}\prod_{\ell\ge1}\frac{1}{i_{\ell}!}.
\end{eqnarray*}
where the equality $\stackrel{\left(*\right)}{=}$ follows from the
fact that $[\la^{m-2}]\frac{d}{d\la}f(\la)=(m-1)[\la^{m-1}]f(\la)$
for $m\ge2$ and any formal power series $f$. This gives \eqref{eq:Nformula}.

For the second part of the result, it follows immediately from cyclic
symmetry that $N_{a}\left(m,n;\bi,\bj\right)=\frac{j_{a}}{m}N\left(m,n;\bi,\bj\right)$,
and thus \eqref{eq:Naformula} follows from \eqref{eq:Nformula}. 
\end{proof}
Now suppose that we are not interested in the distribution of block
sizes in $\rho$, and thus let $N(m,n;\bj)$ denote the number of
meandric pairs $(\pi,\rho)$ in which the interval partition $\pi\in\Int(n)$
has $m$ blocks, of which $j_{k}$ blocks have size $k$, $1\leq k\leq n$,
and $\rho\in\nc(n)$ has $n-m+1$ blocks, subject to the restrictions:
\begin{equation}
\sum_{k=1}^{n}j_{k}=m,\qquad\sum_{k=1}^{n}kj_{k}=n.\label{eq:jrestns}
\end{equation}
We also let $N_{a}(m,n;\bj)$ denote the further restricted number
of meandric pairs above such that the block of $\pi$ containing the
element $1$ is of size $a$ (so it is the block $\{1,\ldots,a\}$),
where we have $j_{a}\ge1$.

We now determine explicit formulas for the numbers $N(m,n;\bj)$ and
$N_{a}(m,n;\bj)$ by summing the results of Proposition \ref{prop:N(m,n,i,j)}.
\begin{prop}
\label{prop:N(m,n,j)} For $n\ge m\ge1$, and $\bj$ satisfying \eqref{eq:jrestns},
we have 
\begin{equation}
N(m,n;\bj)=\frac{m(n-1)!}{(n-m+1)!}\prod_{k\ge1}\frac{k^{j_{k}}}{j_{k}!}\label{eq:Njformula}
\end{equation}
and (where $j_{a}\ge1$), 
\begin{equation}
N_{a}(m,n;\bj)=\frac{j_{a}(n-1)!}{(n-m+1)!}\prod_{k\ge1}\frac{k^{j_{k}}}{j_{k}!}\label{eq:Najformula}
\end{equation}
\end{prop}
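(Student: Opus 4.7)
The plan is to obtain Proposition \ref{prop:N(m,n,j)} directly from Proposition \ref{prop:N(m,n,i,j)} by summing over the admissible vectors $\bi$. Note that in passing from $N(m,n;\bi,\bj)$ to $N(m,n;\bj)$, only the factor $\prod_{\ell\ge1}\frac{1}{i_\ell!}$ depends on $\bi$, while $m!(n-m)!\prod_{k\ge1}\frac{k^{j_k}}{j_k!}$ is constant in $\bi$. So the whole computation reduces to evaluating the single sum
\[
S(m,n):=\sum_{\bi}\ \prod_{\ell\ge1}\frac{1}{i_\ell!},
\]
where $\bi$ ranges over tuples of non-negative integers satisfying $\sum_{\ell\ge1}i_\ell=n-m+1$ and $\sum_{\ell\ge1}\ell i_\ell=n$ (coming from (\ref{eq:ijrestns})).

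The key step is to recognize $S(m,n)$ as counting compositions. Multiplying and dividing by $(n-m+1)!$,
\[
(n-m+1)!\,S(m,n)=\sum_{\bi}\binom{n-m+1}{i_1,i_2,\ldots},
\]
and the multinomial coefficient on the right counts ordered sequences $(a_1,\ldots,a_{n-m+1})$ of positive integers in which $\ell$ appears exactly $i_\ell$ times. Summing over all admissible $\bi$ with $\sum i_\ell=n-m+1$ and $\sum \ell i_\ell=n$ therefore counts all compositions of $n$ into exactly $n-m+1$ positive parts. By the standard stars-and-bars argument, this number is $\binom{n-1}{n-m}$, and hence
\[
S(m,n)=\frac{1}{(n-m+1)!}\binom{n-1}{n-m}=\frac{(n-1)!}{(n-m+1)!\,(n-m)!\,(m-1)!}.
\]

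Plugging this into the identity $N(m,n;\bj)=m!(n-m)!\,S(m,n)\,\prod_{k\ge1}\frac{k^{j_k}}{j_k!}$ and simplifying gives (\ref{eq:Njformula}); likewise, using $N_a(m,n;\bi,\bj)=j_a(m-1)!(n-m)!\prod_\ell\frac{1}{i_\ell!}\prod_k\frac{k^{j_k}}{j_k!}$ and the same value of $S(m,n)$ yields (\ref{eq:Najformula}). Alternatively, one can bypass the second calculation by noting the cyclic symmetry $N_a(m,n;\bj)=\frac{j_a}{m}N(m,n;\bj)$ already used in the proof of Proposition \ref{prop:N(m,n,i,j)}, which matches the ratio of (\ref{eq:Najformula}) to (\ref{eq:Njformula}).

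There is really no serious obstacle: the only subtle point is the identification of $(n-m+1)!\,S(m,n)$ with the number of compositions of $n$ into $n-m+1$ positive parts, which is immediate once one writes out the multinomial. A cleaner, entirely equivalent formulation is the coefficient extraction
\[
S(m,n)=\bigl[\lambda^{n-m+1}x^n\bigr]\ \exp\Bigl(\lambda\sum_{\ell\ge1}x^\ell\Bigr)=\bigl[\lambda^{n-m+1}x^n\bigr]\exp\!\Bigl(\frac{\lambda x}{1-x}\Bigr),
\]
from which $(n-m+1)!\,S(m,n)=[x^n](x/(1-x))^{n-m+1}=\binom{n-1}{n-m}$ drops out at once, confirming the formula by a generating-function route parallel to the Lagrange-inversion argument used for Proposition \ref{prop:N(m,n,i,j)}.
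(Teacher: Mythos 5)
Your proposal is correct and follows essentially the same route as the paper: both sum the formula of Proposition \ref{prop:N(m,n,i,j)} over the admissible vectors $\bi$, reduce to evaluating $\sum_{\bi}\prod_{\ell}1/i_{\ell}!$, and identify $(n-m+1)!$ times that sum with $[x^{n}]\left(x(1-x)^{-1}\right)^{n-m+1}=\binom{n-1}{m-1}$, then deduce the second formula from $N_{a}(m,n;\bj)=\frac{j_{a}}{m}N(m,n;\bj)$. Your stars-and-bars phrasing in terms of compositions of $n$ into $n-m+1$ positive parts is just a combinatorial restatement of the paper's coefficient extraction, and your final generating-function remark coincides with the paper's computation verbatim.
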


\begin{proof}
We have 
\[
N\left(m,n;\bj\right)=\!\!\!\!\sum_{\substack{i_{1},i_{2},\ldots\ge0\\
i_{1}+i_{2}+\ldots=n-m+1\\
i_{1}+2i_{2}+\ldots=n
}
}\!\!\!\!\!\!\!\!N(m,n;\bi,\bj)=m!(n-m)!\prod_{k\ge1}\frac{k^{j_{k}}}{j_{k}!}\sum_{\substack{i_{1},i_{2},\ldots\ge0\\
i_{1}+i_{2}+\ldots=n-m+1\\
i_{1}+2i_{2}+\ldots=n
}
}\prod_{\ell\ge1}\frac{1}{i_{\ell!}},
\]
from \eqref{eq:Nformula}. But elementary generating function methods
yield 
\begin{align*}
\sum_{\substack{i_{1},i_{2},\ldots\ge0\\
i_{1}+i_{2}+\ldots=n-m+1\\
i_{1}+2i_{2}+\ldots=n
}
}\prod_{\ell\ge1}\frac{1}{i_{\ell}!} & =\frac{1}{(n-m+1)!}\left[x^{n}\right]\left(\sum_{\ell\ge1}x^{\ell}\right)^{n-m+1}\!\!\!\!\!\!\!\!\!\!\!\!=\frac{1}{(n-m+1)!}\left[x^{n}\right]\left(x(1-x)^{-1}\right)^{n-m+1}\\
 & =\frac{1}{(n-m+1)!}{n-1 \choose m-1},
\end{align*}
and the first part of the result \eqref{eq:Njformula} follows immediately.

The second part of the result follows from the first part, together
with the fact that $N_{a}(m,n;\bj)=\frac{j_{a}}{m}N(m,n;\bj)$. 
\end{proof}
Proposition \ref{prop:16} is a special case of Proposition \ref{prop:N(m,n,j)}:
\begin{proof}[Proof of Proposition \ref{prop:16} ]
The partition $\lambda_{\ell,m}$ is the only interval partition
in $\Int\left(\ell m\right)$ with $m$ blocks of size $\ell$ each.
Thus, the number of meandric partners of $\lambda_{\ell,m}$ is equal
to $N\left(m,\ell m,\bj\right)$, where $j_{\ell}=m$ and $j_{k}=0$
for every $k\neq\ell$. Upon substituting these values on the right-hand
side of Equation (\ref{eq:Njformula}) and suitably regrouping the
factors, we find the numbers announced in Proposition \ref{prop:16}. 
\end{proof}
Finally, suppose that in addition we are not interested in the distribution
of block sizes in $\pi$, and thus let $N(m,n)$ denote the number
of meandric pairs $(\pi,\rho)$ in which the interval partition $\pi\in\nc(n)$
has $m$ blocks (where $n\ge m\ge1$) and $\rho\in\nc(n)$ has $n-m+1$
blocks. We now determine explicit formulas for the numbers $N(m,n)$
by summing the first part of Proposition \ref{prop:N(m,n,j)}. This
is exactly the content of Theorem \ref{thm:main-shallow meanders}:
\begin{proof}[Proof of Theorem \ref{thm:main-shallow meanders} ]
We have 
\[
N(m,n)=\!\!\!\!\sum_{\substack{j_{1},j_{2},\ldots\ge0\\
j_{1}+j_{2}+\ldots=m\\
j_{1}+2j_{2}+\ldots=n
}
}\!\!\!\!\!\!\!\!N(m,n;\bj)=\frac{m(n-1)!}{(n-m+1)!}\!\!\!\!\sum_{\substack{j_{1},j_{2},\ldots\ge0\\
j_{1}+j_{2}+\ldots=m\\
j_{1}+2j_{2}+\ldots=n
}
}\prod_{k\ge1}\frac{k^{j_{k}}}{j_{k}!}
\]
from \eqref{eq:Njformula}. But elementary generating function methods
yield 
\begin{align*}
\sum_{\substack{j_{1},j_{2},\ldots\ge0\\
j_{1}+j_{2}+\ldots=m\\
j_{1}+2j_{2}+\ldots=n
}
}\prod_{k\ge1}\frac{k^{j_{k}}}{j_{k}!} & =\frac{1}{m!}\left[x^{n}\right]\left(\sum_{k\ge1}kx^{k}\right)^{m}=\frac{1}{m!}\left[x^{n}\right]\left(x(1-x)^{-2}\right)^{m}=\frac{1}{m!}{n+m-1 \choose n-m},
\end{align*}
and the result follows immediately. 
\begin{rem}
\label{rem:shorter-proof-for-thm-1.1}The anonymous referee drew our
attention to an alternative, shorter proof for Theorems \ref{thm:main-shallow meanders}
and \ref{thm:intc-nc}. For $m\ge2$, a partition $\pi\in\nc\left(n\right)$
is a cyclic permutation of an interval partition (so $\pi\in\intc\left(n\right)$)
with $m$ blocks if and only if its Kreweras complement has one block
of size $m$ and $n-m$ singletons. So the number of pairs $\left(\pi,\rho\right)\in\intc\left(n\right)\times\nc\left(n\right)$
defining a meander with $\left|\pi\right|=m$ is equal to the number
of pairs $\left(\tau,\theta\right)\in\nc\left(n\right)^{2}$ defining
a meander with $\tau$ having a single block of size $m$ and $n-m$
singletons. In this case, $\theta$ must have $m$ blocks. Given $\theta$,
the $m$-block of $\tau$ must contain exactly one point from every
block of $\theta$ (by \ref{enu:meander}$\Leftrightarrow$\ref{enu:tree}
in Corollary \ref{cor:equivalence for meander}), so every $\theta\in\nc\left(n\right)$
with $m$ blocks has exactly $\prod_{V\in\theta}\left|V\right|$ options
for $\tau$. Hence, if $F\left(z,t\right)$ is the generating function
for this problem with $\left[z^{n}t^{m}\right]F\left(z,t\right)$
being the number we seek, it satisfies
\[
F\left(z,t\right)=\sum_{n\ge1}\sum_{\theta\in\nc\left(n\right)}\prod_{V\in\theta}\left(t\left|V\right|\right).
\]
By \cite[Theorem 10.23]{nica-speicher2006lectures}, this equation
is equivalent to 
\[
F=tG\left(z\left(1+F\right)\right)~~\mathrm{with}~~G\left(z\right)=\frac{z}{\left(1-z\right)^{2}}
\]
(as in Equations \eqref{ftog} and \eqref{FtoG} below, with $g_{k}=k$).
This equation can be solved using Lagrange's Implicit Function Theorem
(e.g.~\cite[Theorem 1.2.4(i)]{goulden1983combinatorial}) to obtain
the formula \eqref{eqn:meanders with intc on top} in Theorem \ref{thm:intc-nc}.
Theorem \ref{thm:main-shallow meanders} can then be easily deduced
from Theorem \ref{thm:intc-nc}: for every $\pi\in\intc\left(n\right)$
with $\left|\pi\right|=m\ge2$, exactly $\frac{m}{n}$ of its different
cyclic shifts constitute an interval partition.

This proof makes the analogy with the formula for Narayana numbers
more transparent since these numbers are computed using $G\left(z\right)=\frac{z}{1-z}$.
Of course, this approach does not give the more general result of
Proposition \ref{prop:N(m,n,i,j)} and its corollaries such as Proposition
\ref{prop:16}.
\end{rem}

\end{proof}

\section{{\large{}Distance Distributions in the Graphs $\cH_{n}$ \label{sec:Distance-distributions}}}

In this section we study average distances in $\cH_{n}$. For this
sake, we define the number \marginpar{$b\left(\pi,\rho\right)$}$b(\pi,\rho)$
for two non-crossing partitions $\pi,\rho\in\nc\left(n\right)$, by
\begin{equation}
b(\pi,\rho)=\vert\pi\vert+\vert\rho\vert-2\left|\pi\vee\rho\right|.\label{distdef}
\end{equation}
Recall from Theorem \ref{thm:equivalences for d_H}\eqref{enu:|pi|+|rho|-2pi_0}
that $d_{H}\left(\pi,\rho\right)=\vert\pi\vert+\vert\rho\vert-2\left|\pi\veetild\rho\right|$,
and as $\left|\pi\vee\rho\right|\le\left|\pi\veetild\rho\right|$,
the quantity $b\left(\pi,\rho\right)$ is in general an \emph{upper
bound} on the distance. However, when $\pi\in\intc\left(n\right)$,
we have the equality $b\left(\pi,\rho\right)=d_{H}\left(\pi,\rho\right)$
by Proposition \ref{prop:d_H for interval partitions}. We consider
the average value of $b$ for three families: 
\begin{itemize}
\item Let $\widetilde{d}_{2,m}$, $m\ge1$, denote the average value of
$b(\lambda_{2,m},\rho)=d_{H}\left(\lambda_{2,m},\rho\right)$ where\linebreak{}
$\la_{2,m}=\{\{1,2\},\{3,4\},\ldots,\{2m-1,2m\}\}\in\Int\left(2m\right)$
is fixed and $\rho\in\NC(2m)$, as in Proposition \ref{prop:17}. 
\item Let $d_{n}$, $n\ge1$, denote the average value of $b(\pi,\rho)=d_{H}\left(\pi,\rho\right)$
where $\pi\in\Int(n)$ and $\rho\in\nc(n)$, as in Theorem \ref{thm:avg distance from interval partition}. 
\item Let \marginpar{$\widetilde{b}_{n}$}$\widetilde{b}_{n}$, $n\ge1$,
denote the average value of $b(\pi,\rho)$ where $\pi,\rho\in\nc\left(n\right)$.
So $\widetilde{b}_{n}$ is an upper bound on the average distance
in $\cH_{n}$. 
\end{itemize}
Now, to determine these average values, we begin with the simplest
contributions. The number of non-crossing partitions $\rho\in\nc\left(n\right)$
with $\left|\rho\right|=k$ blocks is given by the Narayana number
$\frac{1}{n}\binom{n}{k}{n \choose k-1}$ (e.g.~\cite[Corollaire 4.1]{kreweras1972partitions}),
and the number of interval partitions $\pi\in\Int(n)$ with $k$ blocks
is given by ${n-1 \choose k-1}$. In both cases, these distributions
are symmetric about the centre $\tfrac{1}{2}(n+1)$, which implies
that the average values both for $\left|\rho\right|$, $\rho\in\nc\left(n\right)$
and for $\vert\pi\vert$, $\pi\in\Int(n)$, are given by 
\begin{equation}
\tfrac{1}{2}(n+1).\label{avgeblocks}
\end{equation}
To evaluate the averages for the three families above, we also need
to consider the more complicated $\vert\pi\vee\rho\vert$ term in
\eqref{distdef}. To help with this term, we introduce the three generating
functions 
\begin{align*}
\Psi^{(1)}(z,t) & =\sum_{m\ge1}z^{m}\sum_{\rho\in\nc(2m)}t^{\left|\lambda_{2,m}\vee\rho\right|},\\
\Psi^{(2)}\left(z,t\right) & =\sum_{n\ge1}y^{n}\sum_{\substack{\pi\in\Int\left(n\right)\\
\rho\in\nc(n)
}
}z^{\vert\pi\vert}t^{\vert\pi\vee\rho\vert},\\
\Psi^{(3)}(z,t) & =\sum_{n\ge1}z^{n}\sum_{\pi,\rho\in\nc(n)}t^{\vert\pi\vee\rho\vert},
\end{align*}
and note that from elementary considerations we have 
\begin{equation}
\Psi^{(1)}(z,1)=\sum_{n\ge1}\Cat_{2m}z^{m},\;\;\Psi^{(2)}(z,1)=\sum_{n\ge1}\Cat_{n}z(1+z)^{n-1}y^{n},\;\;\Psi^{(3)}(z,1)=\sum_{n\ge1}\Cat_{n}^{2}z^{n}.\label{seriesone}
\end{equation}
Note that a third variable $y$ appears in $\Psi^{(2)}(z,t)$: we
consider $\Psi^{(2)}(z,t)$ to be a formal power series in the variables
$z$ and $t$, with coefficients that are formal power series in the
variable $y$.

Now in order to evaluate the numerator for the contribution of the
$\vert\pi\vee\rho\vert$ term in the averages $\widetilde{d}_{2,m}$,
$d_{n}$ and $\widetilde{b}_{n}$, we need to apply the partial derivative
$\frac{\partial}{\partial t}$ to $\Psi^{(i)}$, $i=1,2,3$, and set
$t=1$. The following proposition will allow us to carry out this
process in the three cases, by appropriate specialization. A comment
related to the result of Proposition \ref{pr:101} is that it has
a nice interpretation in terms of the operation of ``free additive
convolution'' used in free probability; this is explained in Remark
\ref{rem:82}, following the proposition.

A notational detail: in Proposition \ref{pr:101} (and later on in
this section) we use subscripts to denote partial derivatives, via
the notation \marginpar{$F_{i}$}$F_{i}=\frac{\partial}{\partial x_{i}}F$,
$i=1,\ldots,m$, for a formal power series $F(x_{1},\ldots,x_{m})$
in $m\ge1$ variables. 
\begin{prop}
\label{pr:101}Let 
\begin{equation}
F(z,t)=\sum_{n\ge1}z^{n}\sum_{\theta\in NC(n)}\;\;\prod_{\theta_{i}\in\theta}\Big(t\,g_{|\theta_{i}|}\Big),\label{ftog}
\end{equation}
where $g_{1},g_{2},\ldots$ are expressions which do not depend on
$z$ nor on $t$. Then 
\[
F_{2}(z,1)=F(z,1)+zF_{1}(z,1)-\frac{zF_{1}(z,1)}{1+F(z,1)}.
\]
\end{prop}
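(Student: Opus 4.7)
The plan is to derive a functional equation for $F(z,t)$ via the standard combinatorial decomposition of non-crossing partitions, and then extract the claimed identity by a pair of implicit differentiations.

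\textbf{Setting up the functional equation.} Introduce the auxiliary series $C(w):=\sum_{k\ge1}g_{k}w^{k}$ and the shorthand $M(z):=1+F(z,1)$. The classical decomposition of a non-crossing partition $\theta\in\nc(n)$ according to the block $B\ni 1$ --- if $|B|=k$, then the $k$ elements of $B$ split the remaining $n-k$ points into $k$ possibly empty intervals, each supporting an independent non-crossing partition --- yields, after accounting for the factor of $t$ contributed by every block of $\theta$,
$$F(z,t)\;=\;t\cdot C\bigl(z(1+F(z,t))\bigr). \qquad (\star)$$
At $t=1$ this reduces to the familiar moment/free-cumulant relation $F(z,1)=C(zM(z))$.

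\textbf{Two implicit differentiations.} Differentiating $(\star)$ in $t$ and evaluating at $t=1$ gives $F_{2}(z,1)=C(zM(z))+zC'(zM(z))\,F_{2}(z,1)$, which solves to
$$F_{2}(z,1)\;=\;\frac{F(z,1)}{1-zC'(zM(z))}.$$
Separately, differentiating the $t=1$ relation $F(z,1)=C(zM(z))$ in $z$ and solving for $F_{1}(z,1)$ yields
$$zF_{1}(z,1)\;=\;\frac{zM(z)\,C'(zM(z))}{1-zC'(zM(z))}.$$

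\textbf{Assembling the identity.} Writing $A:=zC'(zM(z))/(1-zC'(zM(z)))$, the previous two displays read $F_{2}(z,1)=F\cdot(1+A)$ and $zF_{1}(z,1)=MA$, where $F:=F(z,1)$ and $M:=1+F$. Using the identity $M-1=F$, one then computes
$$F+zF_{1}(z,1)-\frac{zF_{1}(z,1)}{1+F}\;=\;F+MA-A\;=\;F+AF\;=\;F(1+A)\;=\;F_{2}(z,1),$$
which is exactly the claim. The only substantive step is the functional equation $(\star)$; once it is in hand, the rest is a short exercise in implicit differentiation and algebraic simplification, so I do not anticipate any genuine obstacle.
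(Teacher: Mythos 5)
Your proof is correct and follows essentially the same route as the paper's: both hinge on the functional equation $F(z,t)=t\,C\bigl(z(1+F(z,t))\bigr)$ followed by implicit differentiation in $t$ and in $z$ at $t=1$. The only differences are cosmetic — you derive the functional equation from the first-block decomposition rather than citing it, and you solve each differentiated equation explicitly for $F_{2}$ and $zF_{1}$ in terms of $C'$ before combining, whereas the paper eliminates $G'$ between the two equations and solves the resulting linear relation for $F_{2}(z,1)$.
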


\begin{proof}
By \cite[Theorem 10.23]{nica-speicher2006lectures}, Equation \eqref{ftog}
is equivalent to the functional equation 
\begin{equation}
F(z,t)=t\,G\left(z\left(1+F\left(z,t\right)\right)\right),\label{FtoG}
\end{equation}
where $G(z)=\sum_{m\ge1}g_{m}z^{m}$. Now, applying $\frac{\partial}{\partial t}$
to \eqref{FtoG} and setting $t=1$, we obtain 
\[
F_{2}(z,1)=G\big(z(1+F(z,1))\big)+G'\big(z(1+F(z,1))\big)zF_{2}(z,1)
\]
and, applying $\frac{\partial}{\partial z}$ to \eqref{FtoG} and
setting $t=1$, we obtain 
\[
F_{1}(z,1)=G'\big(z(1+F(z,1))\big)\Big(1+F(z,1)+zF_{1}(z,1)\Big).
\]
We now eliminate $G'\big(z(1+F(z,1))\big)$ between these two equations,
and use $G\big(z(1+F(z,1))\big)=F(z,1)$ (obtained by setting $t=1$
in \eqref{FtoG}), to give 
\[
\Big(F_{2}(z,1)-F(z,1)\Big)\Big(1+F(z,1)+zF_{1}(z,1)\Big)=F_{1}(z,1)\,zF_{2}(z,1).
\]
The result follows immediately by solving this linear equation for
$F_{2}(z,1)$. 
\end{proof}
\begin{rem}
\label{rem:82} In this remark we discuss an interpretation that the
preceding proposition has, when placed in the framework of free probability.

Let $\mu$ be a ``probability distribution with finite moments of
all orders'', but construed in a purely algebraic sense, when it
is simply a linear functional $\mu\colon\mathbb{C}[X]\to\mathbb{C}$
such that $\mu(1)=1$. We use the notation 
\[
M_{\mu}(z):=\sum_{n=1}^{\infty}\mu(X^{n})z^{n}\ \ \mbox{ (the {\em moment series} of \ensuremath{\mu}).}
\]
In free probability one also associates with $\mu$ another power
series, denoted by $R_{\mu}$ and called the \emph{$R$-transform
of $\mu$}, which in many respects is the free probability analog
for the concept of ``characteristic function of $\mu$'' used in
classical probability. The two series $M_{\mu}$ and $R_{\mu}$ satisfy
the functional equation 
\begin{equation}
R_{\mu}\bigl(\,z(1+M_{\mu}(z))\,\bigr)=M_{\mu}(z),\label{eqn:82b}
\end{equation}
which can in fact be used as the definition of $R_{\mu}$. That is,
$R_{\mu}(z)$ is the unique power series of the form $\sum_{n=1}^{\infty}\alpha_{n}z^{n}$
which satisfies Equation \eqref{eqn:82b} (for details, see e.g. Lecture
16 of \cite{nica-speicher2006lectures}).

Now, an important concept of free probability is the operation of
\emph{free additive convolution} $\boxplus$; this is the operation
with probability distributions which corresponds to the operation
of adding freely independent elements of a non-commutative probability
space (for details, see e.g. Lecture 12 of \cite{nica-speicher2006lectures}).
What is of interest for us here is that the probability distribution
$\mu$ from the preceding paragraph defines a convolution semigroup
$(\mu_{t})_{t\in(0,\infty)}$ with respect to the operation $\boxplus$.
The notation commonly used for $\mu_{t}$ is ``$\mu^{\boxplus t}$'',
and in the case when $t$ is an integer one really has $\mu_{t}=\mu\boxplus\cdots\boxplus\mu$
with $t$ terms in the $\boxplus$-sum. The definition of $\mu_{t}$
for arbitrary (not necessarily integer) $t\in(0,\infty)$ is made
via the equation 
\begin{equation}
R_{\mu_{t}}(z)=t\,R_{\mu}(z);\label{eqn:82c}
\end{equation}
that is, $\mu_{t}$ is the uniquely determined distribution whose
$R$-transform equals the right-hand side of \eqref{eqn:82c}.

Let us next look at the moment series $M_{\mu_{t}}(z)$ of the distributions
$\mu_{t}=\mu^{\boxplus t}$, and let us bundle all these moment series
in one series $F$ of two variables: 
\[
F(z,t):=M_{\mu_{t}}(z).
\]
The functional equation of the $R$-transform (stated as Equation
\eqref{eqn:82b} above) then boils down precisely to the Equation
(\ref{FtoG}) used in the proof of Proposition \ref{pr:101}, where
one puts $G:=R_{\mu}$. So then, Proposition \ref{pr:101} can be
seen as stating the fact that the ``time-derivative'' of $M_{\mu_{t}}(z)$
at time $t=1$ can be written in terms of $M_{\mu}$ and $M_{\mu}'$.
Indeed, as the reader can easily check, the formula in the conclusion
of Proposition \ref{pr:101} takes now the form 
\begin{equation}
\frac{\partial F(z,t)}{\partial t}\mid_{t=1}=M_{\mu}(z)+\frac{zM_{\mu}(z)M_{\mu}'(z)}{1+M_{\mu}(z)}.\label{eqn:82d}
\end{equation}
Note that $M_{\mu}(z)$ and its derivative $M_{\mu}'(z)$ are series
which are calculated at the exact time $t=1$. The point in Equation
\eqref{eqn:82d} is that we can calculate $\frac{\partial F}{\partial t}$
at $t=1$ just from information available at the exact time $t=1$:
there is no need to look at other times $t$ near $1$! 
\end{rem}

We now return to the framework of Proposition \ref{pr:101}. In order
to place the generating function $\Psi^{(i)}(z,t)$ in this framework,
we need to prove that it satisfies condition \eqref{ftog}, which
we do in the next lemma for the three cases $i=1,2,3$.

Some notational details used in the proof of Lemma \ref{le:1015}:
for a set $\alpha$ of positive integers, let $\nc\left(\alpha\right)$
denote the set of non-crossing partitions of the elements of $\alpha$,
ordered from smallest to largest. This is a lattice that is isomorphic
to $\NC(n)$, where $n=\vert\alpha\vert$. The maximum element of
this lattice is the partition of $\alpha$ with a single block, namely
the set $\alpha$ itself, and we denote this maximum element by $1_{\alpha}$. 
\begin{lem}
\label{le:1015} The generating function $\Psi^{(i)}(z,t)$ satisfies
condition \eqref{ftog} for $i=1,2,3$. 
\end{lem}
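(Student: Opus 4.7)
The plan is to rewrite each $\Psi^{(i)}$ in the form \eqref{ftog} by grouping the inner summation according to the value of the join $\pi\vee\rho$ (or its analogue), and then invoking the structural fact that for any $\sigma\in\nc(N)$ the poset $\{\theta\in\nc(N):\theta\le\sigma\}$ is in natural bijection with $\prod_{\sigma_i\in\sigma}\nc(\sigma_i)$ (since a non-crossing refinement of $\sigma$ is just a choice of non-crossing partition on each block). This lets the count of pairs $(\pi,\rho)$ with $\pi\vee\rho=\sigma$ factor over the blocks of $\sigma$, because $\pi\vee\rho=\sigma$ is equivalent to $(\pi|_{\sigma_i})\vee(\rho|_{\sigma_i})=1_{\sigma_i}$ for every block $\sigma_i$.

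For $i=3$, I would group by $\sigma=\pi\vee\rho$ directly, writing
\[
\Psi^{(3)}(z,t)=\sum_{n\ge 1}z^n\sum_{\sigma\in\nc(n)}t^{|\sigma|}\,\bigl|\{(\pi,\rho)\in\nc(n)^2:\pi\vee\rho=\sigma\}\bigr|,
\]
and the factorization above yields the count as $\prod_{\sigma_i\in\sigma}a_{|\sigma_i|}$, where $a_k:=\bigl|\{(\pi',\rho')\in\nc(k)^2:\pi'\vee\rho'=1_k\}\bigr|$; this exhibits $\Psi^{(3)}$ in the form \eqref{ftog} with constants $g_k=a_k$. The case $i=1$ is analogous: group by $\sigma:=\lambda_{2,m}\vee\rho$ and use Proposition \ref{prop:d_H for interval partitions} to identify $\sigma=\lambda_{2,m}\veetild\rho$, so each block of $\sigma$ is a union of the pairs $\{2i-1,2i\}$. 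Contracting each such pair to a single point yields some $\tau\in\nc(m)$, and the count of $\rho$'s mapping to a given $\tau$ factors as $\prod_{\tau_j\in\tau}b_{|\tau_j|}$, where $b_k:=\bigl|\{\rho\in\nc(2k):\lambda_{2,k}\vee\rho=1_{2k}\}\bigr|$, giving \eqref{ftog} with $g_k=b_k$.

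The main obstacle is the case $i=2$: a naive grouping by $\sigma=\pi\vee\rho\in\nc(n)$ does not factor cleanly, because a block of $\sigma$ need not be a union of intervals of $\{1,\ldots,n\}$, so the restriction of $\pi\in\Int(n)$ to a block of $\sigma$ does not correspond to an interval partition of that block. I would bypass this by parametrizing interval partitions directly: every $\pi\in\Int(n)$ with $|\pi|=m$ is uniquely specified by the composition $(a_1,\ldots,a_m)$ of $n$ giving its block sizes, so
\[
\Psi^{(2)}(z,t)=\sum_{m\ge 1}z^m\sum_{a_1,\ldots,a_m\ge 1}y^{a_1+\cdots+a_m}\sum_{\rho\in\nc(a_1+\cdots+a_m)}t^{|\pi\vee\rho|}.
\]
By Proposition \ref{prop:d_H for interval partitions}, $\pi\vee\rho=\pi\veetild\rho$, so its blocks are unions of the intervals $I_1,\ldots,I_m$ of $\pi$; contracting each $I_i$ to a point yields $\tau\in\nc(m)$ with $|\tau|=|\pi\vee\rho|$. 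For fixed $\tau$, the restriction of $\rho$ to $\bigcup_{i\in\tau_j}I_i$ is an independent non-crossing partition across $j$, with the requirement that its join with the interval partition there is the one-block partition, contributing a factor $N_{|\tau_j|}\bigl((a_i)_{i\in\tau_j}\bigr)$; the sum over compositions then also factors over blocks of $\tau$, giving
\[
\Psi^{(2)}(z,t)=\sum_{m\ge 1}z^m\sum_{\tau\in\nc(m)}t^{|\tau|}\prod_{\tau_j\in\tau}g_{|\tau_j|}(y),
\]
with $g_k(y):=\sum_{a_1,\ldots,a_k\ge 1}y^{a_1+\cdots+a_k}N_k(a_1,\ldots,a_k)$. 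Since $g_k(y)$ depends only on $y$ (not on $z$ or $t$), this matches \eqref{ftog} with $z$ in its usual role and the $y$-dependence absorbed into the $g_k$'s, completing the proof.
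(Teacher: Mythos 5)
Your proposal is correct and follows essentially the same route as the paper: in all three cases the join is grouped by a contracted partition $\theta\in\nc(m)$ (using that the join lies above $\lambda_{2,m}$, resp.\ above $\pi$), the condition $\pi\vee\rho=\sigma$ is factored block-by-block into "join equals the full block", and for $i=2$ the $y$-dependence is absorbed into the coefficients $g_k$ exactly as the paper does via the compositions $(a_1,\ldots,a_k)$. The only cosmetic difference is that you invoke Proposition \ref{prop:d_H for interval partitions} in the $i=1$ case, where the paper only needs the weaker observation that $\lambda_{2,m}\vee\rho\ge\lambda_{2,m}$.
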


\begin{proof}
For $i=1$, we introduce some specialized notation for pairs of positive
integers. For a positive integer $i$, let $P_{i}=\left\{ 2i-1,2i\right\} $.
For a set $\alpha$ of positive integers, let $P(\alpha)=\cup_{i\in\alpha}P_{i}$,
and let $\lambda\left(\alpha\right)$ denote the partition of $P\left(\alpha\right)$
in which the blocks are the pairs $P_{i}$ for $i\in\alpha$. In other
words, $\la(\alpha)$ is the interval partition of the even set $P(\alpha)$
in which the blocks are consecutive pairs of elements.

Using this notation, note that for every $m$ and $\rho\in\nc\left(2m\right)$,
we must have $\lambda_{2,m}\vee\rho\ge\lambda_{2,m}$ and so $\lambda_{2,m}\vee\rho$
must have blocks of the form $P(\theta_{1}),P(\theta_{2}),\ldots$,
where $\theta_{1},\theta_{2},\ldots$ are the blocks of some partition
$\theta\in\nc(n)$. Thus, 
\[
\Psi^{(1)}(z,t)=\sum_{m\ge1}z^{m}\sum_{\theta\in\nc\left(m\right)}\;\;\prod_{\theta_{i}\in\theta}\left(t\!\!\sum_{\substack{\tau\in\nc\left(P\left(\theta_{i}\right)\right)\\
\tau\vee\lambda\left(\theta_{i}\right)=1_{P\left(\theta_{i}\right)}
}
}\!\!\!\!\!\!1~~\right),
\]
where the inner sum depends only on the size of the set $\theta_{i}$.
We conclude that $\Psi^{(1)}(z,t)$ is of the form $F\left(z,t\right)$
as in \eqref{ftog} with 
\[
g_{k}=\left|\left\{ \tau\in\nc\left(2k\right)\,\middle|\,\tau\vee\lambda_{2,k}=1_{2k}\right\} \right|,
\]
proving the result for $i=1$.\vspace{0.05in}

For $i=2$, we introduce further notation for interval partitions.
Each interval partition with $k$ blocks is specified by a $k$-tuple
of positive integers ${\mathbf{a}}_{k}=(a_{1},\ldots,a_{k})$, where
$a_{1},\ldots,a_{k}\ge1$ specify the sizes of the blocks in order.
For a positive integer $i$, let $B_{i}^{\left({\mathbf{a}}_{k}\right)}=\left\{ a_{1}+\ldots+a_{i-1}+1,\ldots,a_{1}+\ldots+a_{i}\right\} $,
which is the $i$th block in the interval partition. For a set $\alpha$
of positive integers, let $B^{\left({\mathbf{a}}_{k}\right)}(\alpha)=\cup_{i\in\alpha}B_{i}^{\left({\mathbf{a}}_{k}\right)}$,
and let $\gamma^{\left({\mathbf{a}}_{k}\right)}(\alpha)$ denote the
partition of $B^{\left({\mathbf{a}}_{k}\right)}(\alpha)$ in which
the blocks are the intervals $B_{i}^{\left({\mathbf{a}}_{k}\right)}$
for $i\in\alpha$. We let $\gamma_{k}^{\left({\mathbf{a}}_{k}\right)}=\gamma^{\left({\mathbf{a}}_{k}\right)}\left(\left[k\right]\right)$,
the interval partition with block sizes $a_{1},\ldots,a_{k}$, in
order.

Using this notation, note that for any $a_{1},\ldots,a_{k}\ge1$ and
$\rho\in\nc\left(a_{1}+\ldots+a_{k}\right)$, we must have $\gamma_{k}^{\left({\mathbf{a}}_{k}\right)}\vee\rho\ge\gamma_{k}^{\left({\mathbf{a}}_{k}\right)}$,
and so $\gamma_{k}^{\left({\mathbf{a}}_{k}\right)}\vee\rho$ must
have blocks of the form $B^{\left({\mathbf{a}}_{k}\right)}\left(\theta_{1}\right),B^{\left({\mathbf{a}}_{k}\right)}\left(\theta_{2}\right),\ldots$
where $\theta_{1},\theta_{2},\ldots$ are the blocks of some partition
$\theta\in\nc\left(k\right)$. Thus we have 
\begin{eqnarray}
\Psi^{(2)}(z,t) & = & \sum_{k\ge1}z^{k}\sum_{\substack{\theta\in\nc\left(k\right)\\
a_{1},\ldots,a_{k}\ge1
}
}\;\;\prod_{\theta_{i}\in\theta}\left(t\!\!\!\!\sum_{\substack{\tau\in\nc\left(B^{\left({\mathbf{a}}_{k}\right)}(\theta_{i})\right)\\
\gamma^{\left({\mathbf{a}}_{k}\right)}(\theta_{i})\vee\tau=1_{B^{\left({\mathbf{a}}_{k}\right)}(\theta_{i})}
}
}\!\!\!\!\prod_{j\in\theta_{i}}y^{a_{j}}\right)\nonumber \\
 & = & \sum_{k\ge1}z^{k}\sum_{\substack{\theta\in\nc\left(k\right)}
}\;\;\prod_{\theta_{i}\in\theta}\left(t\!\!\!\!~~\sum_{a_{1},\ldots,a_{\left|\theta_{i}\right|}\ge1}~~\sum_{\substack{\tau\in\nc\left(a_{1}+\ldots+a_{\left|\theta_{i}\right|}\right)\\
\gamma_{m}^{\left({\mathbf{a}}_{m}\right)}\vee\tau=1_{a_{1}+\ldots a_{\left|\theta_{i}\right|}}
}
}\!\!\!\!~~\prod_{j=1}^{\left|\theta_{i}\right|}y^{a_{j}}\right),\label{eq:expression for phi 2}
\end{eqnarray}
where the inner sum in \eqref{eq:expression for phi 2} depends only
on the size of the set $\theta_{i}$. We conclude that $\Psi^{(2)}(z,t)$
is of the form \eqref{ftog} with 
\[
g_{m}=\!\!\!\!\!\!\!\!\sum_{\substack{a_{1},\ldots,a_{m}\ge1\\
\tau\in\nc\left(a_{1}+\ldots+a_{m}\right)\\
\gamma_{m}^{\left({\mathbf{a}}_{m}\right)}\vee\tau=1_{a_{1}+\ldots+a_{m}}
}
}\!\!\!\!\!\!\!\!y^{a_{1}+\ldots+a_{m}},\qquad m\ge1,
\]
proving the result for $i=2$. (We repeat that in this case, $g_{m}$
is a formal power series in the variable $y$, not simply a scalar;
nonetheless, it is independent of the variables $z$ and $t$, which
is all we need in order to apply Proposition \ref{pr:101}.)\vspace{0.05in}

Finally, for $i=3$ we have 
\[
\Psi^{(3)}(z,t)=\sum_{n\ge1}z^{n}\sum_{\theta\in\NC(n)}\;\;\prod_{\theta_{i}\in\theta}\Big(t\sum_{\substack{\tau,\sigma\in\nc\left(\theta_{i}\right)\\
\tau\vee\sigma=1_{\theta_{i}}
}
}\!\!\!\!1\;\;\Big),
\]
where the inner sum depends only on the size of the set $\theta_{i}$.
We conclude that $\Psi^{(3)}(z,t)$ is of the form \eqref{ftog} with
\[
g_{m}=\sum_{\substack{\tau,\sigma\in\nc\left(m\right)\\
\tau\vee\sigma=1_{m}
}
}\!\!\!\!1~=\ \left|\left\{ \pi,\rho\in\nc\left(m\right)\,\middle|\,\pi\vee\rho=1_{m}\right\} \right|,\qquad m\ge1,
\]
proving the result for $i=3$. 
\end{proof}
\begin{rem}
\label{rem:84} Each of the three verifications made in the proof
of Lemma \ref{le:1015} has (on the lines of Remark \ref{rem:82})
an interpretation in terms of a $\boxplus$-convolution semigroup
$(\mu_{t})_{t\in(0,\infty)}$. In each of the three cases, the relevant
probability distribution $\mu$ turns out to be related to the Marchenko-Pastur
distribution, which is the counterpart of the Poisson distribution
in free probability. For illustration, we discuss below in a bit more
detail the first of the three cases, concerning the series $\Psi^{(1)}(z,t)$.

The standard Marchenko-Pastur (or free Poisson) distribution is the
absolutely continuous distribution on $[0,4]$ with density $(2\pi)^{-1}\sqrt{(4-t)/t}dt$.
In the algebraic setting of Remark \ref{rem:82}, where a compactly
supported probability distribution is treated as a linear functional
on $\bC[X]$, the standard free Poisson distribution is the linear
functional $\nu:\bC[X]\to\bC$ uniquely determined by the requirement
that 
\begin{equation}
\nu(1)=1\ \mbox{ and }\ \nu(X^{n})=\Cat_{n},\ \ \forall\,n\in\bN.\label{eqn:84a}
\end{equation}
The above formula for moments translates into a very simple formula
for the $R$-transform of $\nu$: 
\begin{equation}
R_{\nu}(z)=\sum_{n=1}^{\infty}z^{n}=z/(1-z)\label{eqn:84b}
\end{equation}
(see e.g. \cite{nica-speicher2006lectures}, pages 203-206 in Lecture
12).

For our discussion in the present remark, it is convenient to also
consider the framework of a non-commutative probability space $(\cA,\varphi)$
-- this simply means that $\cA$ is a unital algebra over $\bC$
and $\varphi:\cA\to\bC$ is a linear functional normalized such that
$\varphi(1_{_{\cA}})=1$. For an element $a\in\cA$, the linear functional
$\mu_{a}:\bC[X]\to\bC$ determined by the requirement that 
\begin{equation}
\mu_{a}(1)=1\ \mbox{ and }\ \mu_{a}(X^{n})=\varphi(a^{n}),\ \ \forall\,n\in\bN\label{eqn:84c}
\end{equation}
is called the {\em distribution of $a$} with respect to $\varphi$.
An element $a\in\cA$ is said to be a {\em standard free Poisson
element} when its distribution $\mu_{a}$ is the functional $\nu$
from Equation (\ref{eqn:84a}).

Let $a$ be a standard free Poisson element in a non-commutative probability
space $(\cA,\varphi)$, let us fix a positive integer $\ell$, and
let us consider the element $a^{\ell}\in\cA$. Let $\mu$ denote the
distribution of $a^{\ell}$ with respect to $\varphi$. A basic formula
in the combinatorial theory of the $R$-transform (known as the ``formula
for free cumulants with products as arguments'' -- see \cite{nica-speicher2006lectures},
pages 178-181 in Lecture 11) expresses the coefficients of the $R$-transform
of $\mu$ ( = distribution of $a^{\ell}$) in terms of the coefficients
of the $R$-transform of $\nu$ ( = distribution of $a$). This formula
says that for every $m\in\bN$ we have 
\begin{equation}
[z^{m}]\bigl(\,R_{\mu}(z)\,\bigr)=\sum_{\begin{array}{c}
{\scriptstyle {\pi\in\nc\left(\ell m\right)\ such}}\\
{\scriptstyle {that\ \pi\vee\lambda_{\ell,m}=1_{\ell m}}}
\end{array}}\ \Bigl(\,\prod_{V\in\pi}[z^{|V|}]\bigl(\,R_{\nu}(z)\,\bigr)\,\Bigr),\label{eqn:84d}
\end{equation}
where $\lambda_{\ell,m}=\left\{ \left\{ 1,\ldots,\ell\right\} ,\left\{ \ell+1,\ldots,2\ell\right\} ,\ldots,\left\{ \left(m-1\right)\ell+1,\ldots,m\ell\right\} \right\} \in\nc\left(\ell m\right)$
as in Proposition \ref{prop:16}. Since in the case at hand all the
coefficients of $R_{\nu}$ are equal to $1$, Equation (\ref{eqn:84d})
amounts to 
\begin{equation}
[z^{m}]\bigl(\,R_{\mu}(z)\,\bigr)=\ \vline\ \{\pi\in\nc\left(\ell m\right)\mid\pi\vee\lambda_{\ell,m}=1_{\ell m}\}\ \vline\ .\label{eqn:84e}
\end{equation}
Upon considering the $\boxplus$-convolution semigroup $(\mu_{t})_{t\in(0,\infty)}$
where $\mu_{t}=\mu^{\boxplus t}$, we come to the formula 
\begin{equation}
[z^{m}]\bigl(\,R_{\mu_{t}}(z)\,\bigr)=t\cdot\ \vline\ \left\{ \pi\in\nc\left(\ell m\right)\mid\pi\vee\lambda_{\ell,m}=1_{\ell m}\right\} \ \vline\ ,\label{eqn:84f}
\end{equation}
which holds for every $m\in\bN$ and $t\in(0,\infty)$.

A reader who is familiar with the summation formula that gives the
moments of a distribution in terms of the coefficients of its $R$-transform
(the so-called ``moment$\leftrightarrow$free cumulant'' formula,
see e.g. \cite{nica-speicher2006lectures}, pages 175-177 in Lecture
11) will now recognize that the first two paragraphs in the proof
of Lemma \ref{le:1015} are actually performing a transition from
free cumulants to moments, which starts from Equation (\ref{eqn:84f})
in the special case $\ell=2$ and\footnote{It is easily seen that the first two paragraphs in the proof of Lemma
\ref{le:1015} would in fact work for any $\ell\in\bN$ instead of
$\ell=2$.} arrives to 
\begin{equation}
M_{\mu_{t}}(X^{m})=\sum_{\pi\in NC(\ell m)}t^{\left|\pi\vee\lambda_{\ell,m}\right|},\ \ \forall m\in\bN,\,t\in(0,\infty).\label{eqn:84g}
\end{equation}
Hence, with the specialization $\ell=2$, Equation (\ref{eqn:84g})
amounts to the following statement: for every fixed $t\in(0,\infty)$,
the generating function $\Psi^{(1)}(z,t)$ is precisely the moment
series $M_{\mu_{t}}(z)$. This is the interpretation in terms of $\boxplus$-convolution
for why the series $\Psi^{(1)}(z,t)$ fits in the framework of Proposition
\ref{pr:101} (or of Remark \ref{rem:82}).

We conclude this remark by mentioning that meandric systems also relate
to another direction of research in operator algebras, which studies
planar algebras. This relation is established via the so-called ``Temperley-Lieb
diagrams'', which have been known for a while to be related to meandric
systems (see e.g. Sections 4.3, 4.4 of the survey \cite{di2000folding}),
and which also play an important role in the study of planar algebras.
As pointed out to us by Dimitri Shlyakhtenko, some considerations
about meandric systems with prescribed shape of their top part can
in fact be found in the papers \cite{GJS2010} and \cite{CDS2017},
in the guise of calculations with some planar algebra elements denoted
there as ``$\cup$'' and ``$\Cup$''. When translated into the
framework of the present paper, the calculations involving concatenations
of copies of ``$\cup$'' (e.g. in Lemma 2 of \cite{GJS2010} or
in Proposition 2.14 of \cite{CDS2017}) become calculations about
distances in $NC(n)$ that are measured from the base-points $0_{n}\in NC(n)$.
Likewise calculations done in connection to the element ``$\Cup$''
would translate into calculations about distances measured from the
base-points $\lambda_{2,m}$ of our Proposition \ref{prop:17} (and
should thus relate to the free probability considerations presented
above in this remark). To our best knowledge, the precise result stated
in Proposition \ref{prop:17} is not covered from the Temperley-Lieb
angle of studying meandric systems. 
\end{rem}

$\ $

We now return to the main line of this section. We will show how,
by combining Proposition \ref{pr:101} and Lemma \ref{le:1015}, one
can evaluate the three averages $\widetilde{d}_{2,m}$, $d_{n}$ and
$\widetilde{b}_{n}$. We begin with $\widetilde{d}_{2,m}$, which,
as described above, is the average distance between a non-crossing
partition $\rho\in\NC\left(2m\right)$ and the fixed interval partition
$\la_{2,m}=\left\{ \{1,2\},\dots,\{2m-1,2m\}\right\} $. 
\begin{proof}[Proof of Proposition \ref{prop:17} ]
Recall that we need to prove that for $m\ge1$ 
\[
\left(i\right)~~\widetilde{d}_{2,m}=-\frac{3}{2}+\frac{{2m \choose m}2^{2m-1}}{\Cat_{2m}},
\]
and that 
\[
\left(ii\right)~~\lim_{m\rightarrow\infty}\left(\widetilde{d}_{2,m}-\sqrt{2}m\right)=\frac{7\sqrt{2}}{16}-\frac{3}{2}.
\]
For part $\left(i\right)$, from \eqref{distdef} and \eqref{avgeblocks}
we have 
\begin{equation}
\widetilde{d}_{2,m}=m+\left(m+\frac{1}{2}\right)-\frac{2}{\Cat_{2m}}\big[z^{m}\big]\Psi_{2}^{(1)}(z,1).\label{mu1avge}
\end{equation}
Now by Lemma \ref{le:1015} with $i=1$, we can apply Proposition
\ref{pr:101} and obtain 
\[
\Psi_{2}^{(1)}(z,1)=\Psi^{(1)}(z,1)+z\Psi_{1}^{(1)}(z,1)-\frac{z\Psi_{1}^{(1)}(z,1)}{1+\Psi^{(1)}(z,1)}.
\]
But from \eqref{seriesone} we have 
\[
\Psi^{(1)}(z,1)=\sum_{m\ge1}\Cat_{2m}z^{m},\qquad z\Psi_{1}^{(1)}(z,1)=\sum_{m\ge1}m\Cat_{2m}z^{m},
\]
and thus from \eqref{mu1avge} we obtain 
\begin{eqnarray*}
\widetilde{d}_{2,m} & = & 2m+\frac{1}{2}-2-2m+\frac{2}{\Cat_{2m}}\big[z^{m}\big]\frac{z\Psi_{1}^{(1)}(z,1)}{1+\Psi^{(1)}(z,1)}=-\frac{3}{2}+\frac{2}{\Cat_{2m}}\big[z^{m}\big]\frac{z\Psi_{1}^{(1)}(z,1)}{1+\Psi^{(1)}(z,1)}\\
 & = & -\frac{3}{2}+\frac{2}{\Cat_{2m}}\big[z^{2m}\big]\frac{z^{2}\Psi_{1}^{(1)}(z^{2},1)}{1+\Psi^{(1)}(z^{2},1)}
\end{eqnarray*}
To simplify the final term above, we have the closed form\footnote{It is standard that $C\left(z\right):=\sum\Cat_{n}z^{n}=\frac{1-\sqrt{1-4z}}{2z}$,
and here we need $\frac{1}{2}\left(C\left(z\right)+C\left(-z\right)\right)$.} 
\[
1+\Psi^{(1)}(z^{2},1)=\sum_{m\ge0}\Cat_{2m}z^{2m}=\frac{u-v}{4z},
\]
where $u=\sqrt{1+4z}$, $v=\sqrt{1-4z}$. Thus, using the chain rule,
we obtain 
\[
z^{2}\Psi_{1}^{(1)}(z^{2},1)=\frac{z^{2}}{2z}\frac{\partial}{\partial z}\Psi^{(1)}(z^{2},1)=\frac{z}{2}\Big(-\frac{u-v}{4z^{2}}+\frac{u+v}{2zuv}\Big),
\]
and, combining these expressions and simplifying via $u^{2}-v^{2}=8z$
and $u^{2}+v^{2}=2$, we get\footnote{We use here the well-known equality of generating functions $\sum_{n\ge0}\binom{2n}{n}z^{n}=\frac{1}{\sqrt{1-4z}}$.}
\begin{align*}
\frac{z^{2}\Psi_{1}^{(1)}(z^{2},1)}{1+\Psi^{(1)}(z^{2},1)} & =-\frac{1}{2}+\frac{z(u+v)}{uv(u-v)}=-\frac{1}{2}+\frac{(u^{2}-v^{2})(u+v)}{8uv(u-v)}\\
 & =-\frac{1}{2}+\frac{(u+v)^{2}}{8uv}=-\frac{1}{2}+\frac{2uv+2}{8uv}=-\frac{1}{4}+\frac{1}{4uv}\\
 & =-\frac{1}{4}+\frac{1}{4\sqrt{1-16z^{2}}}=\sum_{m\ge1}{2m \choose m}4^{m-1}z^{2m}.
\end{align*}
Part $\left(i\right)$ of the result follows immediately. \vspace{0.1in}
For part $\left(ii\right)$, the asymptotics of central binomial coefficients
and Catalan numbers are well known (see, e.g., \cite[Pages 381, 383--384]{flajolet2009analytic}):
\begin{equation}
{2m \choose m}=\frac{4^{m}}{\sqrt{\pi m}}\Bigg(1-\frac{1}{8m}+\bigO\Big(\frac{1}{m^{2}}\Big)\Bigg),\qquad\Cat_{n}=\frac{4^{n}}{n\sqrt{\pi n}}\Bigg(1-\frac{9}{8n}+\bigO\Big(\frac{1}{n^{2}}\Big)\Bigg).\label{asymCat}
\end{equation}
Combining these, we obtain 
\begin{align*}
\frac{{2m \choose m}2^{2m-1}}{\Cat_{2m}} & =\frac{4^{2m}}{2\sqrt{\pi m}}\Bigg(\frac{4^{2m}}{2m\sqrt{2\pi m}}\Bigg)^{-1}\Bigg(1+\frac{7}{16m}+\bigO\Big(\frac{1}{m^{2}}\Big)\Bigg)\\
 & =\sqrt{2}\,m+\frac{7\sqrt{2}}{16}+\bigO\Big(\frac{1}{m}\Big),
\end{align*}
and part $\left(ii\right)$ of the result follows immediately. 
\end{proof}
Now we consider $d_{n}$, which, as described above, is the average
distance between an interval partition $\pi\in\Int\left(n\right)$
and a non-crossing partition $\rho\in\nc(n)$. Theorem \ref{thm:avg distance from interval partition}
above states that $\lim_{n\to\infty}\left(d_{n}-\frac{2}{3}n\right)=-\frac{28}{27}$.
Before proving it, we first prove the following result: 
\begin{prop}
\label{prop:expression for d_n}For $n\ge1$, 
\begin{equation}
d_{n}=6n+4+\frac{(-1)^{n}}{2^{n-1}\Cat_{n}}-\frac{3}{2^{n-1}\Cat_{n}}\left[y^{n}\right]\frac{1}{(1+y)\sqrt{1-8y}}.\label{eq:d_n - exact expression}
\end{equation}
\end{prop}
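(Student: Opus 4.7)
Because $\pi \in \Int(n)$, Proposition \ref{prop:d_H for interval partitions} collapses $d_H(\pi,\rho)$ to $|\pi|+|\rho|-2|\pi\vee\rho|$. The average value of $|\pi|$ over $\Int(n)$ and of $|\rho|$ over $\nc(n)$ equals $(n+1)/2$ in each case, by \eqref{avgeblocks}. Setting
\[
S_n \;:=\; \sum_{\pi \in \Int(n),\,\rho \in \nc(n)} |\pi \vee \rho| \;=\; [y^n]\,\Psi^{(2)}_2(1,1),
\]
one therefore has $d_n = (n+1) - 2S_n/(2^{n-1}\Cat_n)$, and the task reduces to evaluating $\Psi^{(2)}_2(1,1)$ in closed form and extracting its coefficient of $y^n$.

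By Lemma \ref{le:1015} (case $i=2$), $\Psi^{(2)}$ fits the hypothesis of Proposition \ref{pr:101}. Specializing that proposition to $z=1$ gives
\[
\Psi^{(2)}_2(1,1) \;=\; P + Q - \frac{Q}{1+P},\qquad P := \Psi^{(2)}(1,1),\;\; Q := z\,\Psi^{(2)}_1(z,1)\big|_{z=1}.
\]
Starting from $\Psi^{(2)}(z,1) = \sum_{n \ge 1}\Cat_n\, z(1+z)^{n-1}y^n$ (from \eqref{seriesone}), and using the standard identities $\sum_{n\ge 0}\Cat_n w^n = (1-\sqrt{1-4w})/(2w)$ and $\sum_{n\ge 0}(n+1)\Cat_n w^n = (1-4w)^{-1/2}$, the computations at $z=1$ deliver, with $U := \sqrt{1-8y}$,
\[
P \;=\; \frac{1-4y-U}{8y}, \qquad Q \;=\; \frac{1-U}{4U}.
\]

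The decisive step is the algebraic simplification of $Q/(1+P)$. Rationalizing $1+P$ via $(1-U)(1+U)=8y$ yields $1+P = 2(1+y)/(1+4y+U)$, and the short expansion $(1-U)(1+4y+U) = 4y(3-U)$ then produces
\[
\frac{Q}{1+P} \;=\; \frac{y(3-U)}{2(1+y)U} \;=\; \frac{3y}{2(1+y)\sqrt{1-8y}} \;-\; \frac{y}{2(1+y)},
\]
so that the generating function $(1+y)^{-1}(1-8y)^{-1/2}$ featured in the claim emerges naturally. Writing $y/(1+y) = 1 - 1/(1+y)$ and extracting $[y^n]$ term-by-term via the elementary identities $[y^n](1+y)^{-1} = (-1)^n$ and $[y^n](1-8y)^{-1/2} = \binom{2n}{n}2^n$ yields a closed form for $S_n$. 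Substituting into $d_n = (n+1) - 2S_n/(2^{n-1}\Cat_n)$ and invoking $\binom{2n}{n} = (n+1)\Cat_n$ to collapse the resulting $5\binom{2n}{n}/\Cat_n$ contribution to the clean linear term $5(n+1)$ then assembles the announced formula.

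The only non-mechanical maneuver is the simplification of $Q/(1+P)$ together with its partial-fraction split: getting this step right is crucial, because otherwise one is left with a combination of $(1-8y)^{\pm 1/2}$ and $(1+y)^{-1}$ pieces whose coefficients do not evidently combine into the compact form required. Everything else is routine coefficient extraction.
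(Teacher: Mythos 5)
Your proposal is correct and follows essentially the same route as the paper's proof: reduce via Proposition \ref{prop:d_H for interval partitions} and \eqref{avgeblocks} to the coefficient $[y^n]\Psi_2^{(2)}(1,1)$, apply Proposition \ref{pr:101} through Lemma \ref{le:1015} (case $i=2$), compute the same closed forms for $\Psi^{(2)}(1,1)$ and $\Psi_1^{(2)}(1,1)$ in terms of $U=\sqrt{1-8y}$, and rationalize to obtain $\frac{Q}{1+P}=\frac{3y}{2(1+y)U}-\frac{y}{2(1+y)}$ before extracting coefficients. The algebra checks out (in particular the net $5\binom{2n}{n}/\Cat_n=5(n+1)$ bookkeeping that produces $6n+4$), so the only differences from the paper are cosmetic ones of presentation.
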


\begin{proof}
From \eqref{distdef} and \eqref{avgeblocks} we have 
\begin{equation}
d_{n}=\tfrac{1}{2}(n+1)+\tfrac{1}{2}(n+1)-\frac{2}{2^{n-1}\Cat_{n}}\big[y^{n}\big]\Psi_{2}^{(2)}(1,1).\label{mu2avge}
\end{equation}
Now by Lemma \ref{le:1015} with $i=2$, we can apply Proposition~\ref{pr:101},
and hence obtain 
\[
\Psi_{2}^{(2)}(1,1)=\Psi^{(2)}(1,1)+\Psi_{1}^{(2)}(1,1)-\frac{\Psi_{1}^{(2)}(1,1)}{1+\Psi^{(2)}(1,1)}.
\]
But from~\eqref{seriesone} we have 
\[
\Psi^{(2)}(1,1)=\sum_{n\ge1}2^{n-1}\Cat_{n}y^{n},\qquad\Psi_{1}^{(2)}(1,1)=\sum_{n\ge1}(n+1)2^{n-2}\Cat_{n}y^{n},
\]
and thus from~\eqref{mu2avge} we obtain 
\begin{align*}
d_{n} & =(n+1)-2-(n+1)+\frac{2}{2^{n-1}\Cat_{n}}\left[y^{n}\right]\frac{\Psi_{1}^{(2)}(1,1)}{1+\Psi^{(2)}(1,1)}\\
 & =-2+\frac{2}{2^{n-1}\Cat_{n}}\left[y^{n}\right]\frac{\Psi_{1}^{(2)}(1,1)}{1+\Psi^{(2)}(1,1)}.
\end{align*}
To simplify the final term above, we have the closed forms 
\[
1+\Psi^{(2)}(1,1)=\frac{1}{2}+\sum_{n\ge0}2^{n-1}\Cat_{n}y^{n}=\frac{1+4y-\sqrt{1-8y}}{8y},
\]
and 
\[
\Psi_{1}^{(2)}(1,1)=\sum_{n\ge1}(n+1)2^{n-2}\Cat_{n}y^{n}=\sum_{n\ge1}2^{n-2}{2n \choose n}y^{n}=\frac{1}{4}\left(\frac{1}{\sqrt{1-8y}}-1\right).
\]
Combining these expressions, we get 
\begin{align*}
\frac{\Psi_{1}^{(2)}(1,1)}{1+\Psi^{(2)}(1,1)} & =\frac{2y\left(\frac{1}{\sqrt{1-8y}}-1\right)}{1+4y-\sqrt{1-8y}}\\
 & =\frac{1}{8(1+y)}\left(\frac{1}{\sqrt{1-8y}}-1\right)\left(1+4y+\sqrt{1-8y}\right)\\
 & =-\frac{y}{2(1+y)}+\frac{3y}{2(1+y)\sqrt{1-8y}}\\
 & =-\frac{y}{2(1+y)}+\frac{3}{2\sqrt{1-8y}}-\frac{3}{2(1+y)\sqrt{1-8y}},
\end{align*}
where for the second equality we have rationalized the denominator,
and for the third and fourth equalities we have routinely simplified.
Equation \eqref{eq:d_n - exact expression} follows immediately. 
\end{proof}
\begin{proof}[Proof of Theorem \ref{thm:avg distance from interval partition} ]
To prove that $\lim_{n\rightarrow\infty}\Big(d_{n}-\frac{2}{3}n\Big)=-\frac{28}{27}$,
we follow the treatment in \cite[Chapter VI]{flajolet2009analytic},
referred to as \emph{singularity analysis}. First, 
\[
\left[y^{n}\right]\frac{1}{\left(1+y\right)\sqrt{1-8y}}=\left[y^{n}\right]\frac{8}{\left(8+8y\right)\sqrt{1-8y}}=8^{n}\cdot\left[y^{n}\right]\frac{8}{\left(8+y\right)\sqrt{1-y}}.
\]
We expand $(8+y)^{-1}$ about $y=1$, in this case via a geometric
series, to obtain 
\[
\frac{8}{8+y}=\frac{\frac{8}{9}}{1-\frac{1}{9}(1-y)}=\frac{8}{9}\sum_{k\ge0}\Big(\frac{1}{9}\Big)^{k}(1-y)^{k},
\]
and hence obtain the expansion 
\[
\frac{8}{(8+y)\sqrt{1-y}}=\frac{8}{9}\left(\frac{1}{\sqrt{1-y}}+\frac{1}{9}\sqrt{1-y}\right)+\frac{8\cdot\left(1-y\right)^{1.5}}{81\cdot\left(8+y\right)}.
\]
But, also from \cite[Theorem VI.1]{flajolet2009analytic}, we have
\[
\big[y^{n}\big](1-y)^{\alpha}=\frac{n^{-\alpha-1}}{\Gamma(-\alpha)}\Bigg(1+\frac{\alpha(\alpha+1)}{2n}+\bigO\Big(\frac{1}{n^{2}}\Big)\Bigg),
\]
where $\Gamma\left(\alpha\right)$ is the \emph{Euler Gamma Function}
defined as $\Gamma\left(\alpha\right)\stackrel{\mathrm{def}}{=}\int_{0}^{\infty}e^{-t}t^{\alpha-1}dt$.
From this we deduce, using $\Gamma\big(\tfrac{1}{2}\big)=\sqrt{\pi}$,
$\Gamma\big(-\tfrac{1}{2}\big)=-2\sqrt{\pi}$, that 
\begin{eqnarray*}
\left[y^{n}\right]\frac{1}{\sqrt{1-y}} & = & \frac{1}{\sqrt{\pi n}}\left(1-\frac{1}{8n}+\bigO\left(\frac{1}{n^{2}}\right)\right)\\
\left[y^{n}\right]\sqrt{1-y} & = & \frac{1}{-2\sqrt{\pi n}}\left(\frac{1}{n}+\bigO\left(\frac{1}{n^{2}}\right)\right).
\end{eqnarray*}
Finally, since the function $\frac{\left(1-y\right)^{1.5}}{8+y}$
is $\bigO\left(1-y\right)^{1.5}$ in a neighborhood of $y=1$ and
satisfies the other assumptions of \cite[Theorem VI.3]{flajolet2009analytic},
we have, by this theorem, that 
\[
\left[y^{n}\right]\frac{\left(1-y\right)^{1.5}}{8+y}=\bigO\left(n^{-2.5}\right).
\]
We combine all this to obtain 
\begin{eqnarray*}
\left[y^{n}\right]\frac{1}{\left(1+y\right)\sqrt{1-8y}} & = & 8^{n}\cdot\left[y^{n}\right]\left(\frac{8}{9}\frac{1}{\sqrt{1-y}}+\frac{8}{81}\sqrt{1-y}+\frac{8\cdot\left(1-y\right)^{1.5}}{81\cdot\left(8+y\right)}\right)\\
 & = & \frac{8}{9}\cdot\frac{8^{n}}{\sqrt{\pi n}}\left(\left(1-\frac{1}{8n}+\bigO\left(\frac{1}{n^{2}}\right)\right)+\left(-\frac{1}{18n}+\bigO\left(\frac{1}{n^{2}}\right)\right)+\bigO\left(\frac{1}{n^{2}}\right)\right)\\
 & = & \frac{8}{9}\cdot\frac{8^{n}}{\sqrt{\pi n}}\left[1-\frac{13}{72n}+\bigO\left(\frac{1}{n^{2}}\right)\right].
\end{eqnarray*}
Combining these results with \eqref{asymCat} we obtain 
\begin{eqnarray*}
\frac{3}{2^{n-1}\Cat_{n}}\left[y^{n}\right]\frac{1}{\left(1+y\right)\sqrt{1-8y}} & = & \frac{2\cdot3\cdot n\sqrt{\pi n}}{2^{n}\cdot4^{n}}\left[1+\frac{9}{8n}+\bigO\left(\frac{1}{n^{2}}\right)\right]\frac{8\cdot8^{n}}{9\sqrt{\pi n}}\left[1-\frac{13}{72n}+\bigO\left(\frac{1}{n^{2}}\right)\right]\\
 & = & \frac{16}{3}n\left[1+\frac{17}{18n}+\bigO\left(\frac{1}{n^{2}}\right)\right]=\frac{16}{3}n+\frac{136}{27}+\bigO\left(\frac{1}{n}\right),
\end{eqnarray*}
and the result follows immediately using \eqref{eq:d_n - exact expression}. 
\end{proof}
Finally, we consider $\widetilde{b}_{n}$, which, as described above,
is an upper bound on the average distance between two non-crossing
partitions $\pi,\rho\in\nc(n)$. Before proving Proposition \ref{prop:upper bound for average distance}
concerning the asymptotics of $\widetilde{b}_{n}$, we first show: 
\begin{prop}
\label{prop:exact expression for mu_n}For $n\ge1$ we have 
\[
\widetilde{b}_{n}=-n-1+\frac{2n}{\Cat_{n}^{2}}\big[z^{n}\big]\log\left(1+\sum_{k\ge1}\Cat_{k}^{2}z^{k}\right).
\]
\end{prop}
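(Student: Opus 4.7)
The plan is to follow the same template used in the proofs of Proposition~\ref{prop:17} and Theorem~\ref{thm:avg distance from interval partition}, specialized to the case $i=3$, and then to recognize that the awkward rational term which appears is really a logarithmic derivative. First, I would decompose $b(\pi,\rho)=|\pi|+|\rho|-2|\pi\vee\rho|$ inside the defining sum for $\widetilde{b}_n$. Using~\eqref{avgeblocks}, the average of $|\pi|+|\rho|$ over $\pi,\rho\in\nc(n)$ is $n+1$; and by the very definition of $\Psi^{(3)}$, differentiating in $t$ at $t=1$ extracts a factor of $|\pi\vee\rho|$. Together these give
\[
\widetilde{b}_n = (n+1) - \frac{2}{\Cat_n^{\,2}}\,[z^n]\Psi^{(3)}_2(z,1).
\]

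Next, I would apply Proposition~\ref{pr:101} to $F=\Psi^{(3)}$, which is permitted by Lemma~\ref{le:1015} with $i=3$, obtaining
\[
\Psi^{(3)}_2(z,1) = \Psi^{(3)}(z,1) + z\Psi^{(3)}_1(z,1) - \frac{z\Psi^{(3)}_1(z,1)}{1+\Psi^{(3)}(z,1)}.
\]
From~\eqref{seriesone} the first two summands contribute, respectively, $\Cat_n^{\,2}$ and $n\Cat_n^{\,2}$ to the coefficient of $z^n$, so their joint contribution is $(n+1)\Cat_n^{\,2}$.

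The crux is to observe that, since $\Psi^{(3)}_1(z,1)$ is the $z$-derivative of $1+\Psi^{(3)}(z,1)$, the remaining term is a logarithmic derivative:
\[
\frac{z\Psi^{(3)}_1(z,1)}{1+\Psi^{(3)}(z,1)} \;=\; z\,\frac{d}{dz}\log\bigl(1+\Psi^{(3)}(z,1)\bigr).
\]
The elementary identity $[z^n]\bigl(z f'(z)\bigr)=n\,[z^n]f(z)$ then converts the coefficient extraction into
\[
[z^n]\frac{z\Psi^{(3)}_1(z,1)}{1+\Psi^{(3)}(z,1)} \;=\; n\,[z^n]\log\Bigl(1+\sum_{k\ge 1}\Cat_k^{\,2}\,z^k\Bigr).
\]
Substituting back into the expression for $\widetilde{b}_n$ and collecting terms gives the claimed formula. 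There is no genuine obstacle here beyond bookkeeping; the only step requiring a moment's attention is the passage to the logarithmic-derivative form, which exactly parallels the manipulations already used for $\widetilde{d}_{2,m}$ and $d_n$ earlier in this section.
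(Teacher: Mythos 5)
Your proposal is correct and follows essentially the same route as the paper's own proof: the decomposition via \eqref{avgeblocks}, the application of Proposition~\ref{pr:101} (justified by Lemma~\ref{le:1015} with $i=3$), the coefficient extractions from \eqref{seriesone}, and the recognition of the remaining term as $z\frac{\partial}{\partial z}\log\bigl(1+\Psi^{(3)}(z,1)\bigr)$ combined with the identity $[z^{n}]\,z f'(z)=n\,[z^{n}]f(z)$ are all exactly the steps the paper takes. No gaps.
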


\begin{proof}
From \eqref{distdef} and \eqref{avgeblocks} we have 
\begin{equation}
\widetilde{b}_{n}=\tfrac{1}{2}(n+1)+\tfrac{1}{2}(n+1)-\frac{2}{\Cat_{n}^{2}}\left[z^{n}\right]\Psi_{2}^{(3)}(z,1).\label{mu3avge}
\end{equation}
Now by Lemma~\ref{le:1015} with $i=3$, we can apply Proposition
\ref{pr:101}, and hence obtain 
\[
\Psi_{2}^{(3)}(z,1)=\Psi^{(3)}(z,1)+z\Psi_{1}^{(3)}(z,1)-\frac{z\Psi_{1}^{(3)}(z,1)}{1+\Psi^{(3)}(z,1)}.
\]
But from \eqref{seriesone} we have 
\[
\Psi^{(3)}(z,1)=\sum_{n\ge1}\Cat_{n}^{2}z^{n},\qquad z\Psi_{1}^{(3)}(z,1)=\sum_{n\ge1}n\Cat_{n}^{2}z^{n},
\]
and thus from \eqref{mu3avge} we obtain 
\begin{align*}
\widetilde{b}_{n} & =(n+1)-2-2n+\frac{2}{\Cat_{n}^{2}}\big[z^{n}\big]\frac{z\Psi_{1}^{(3)}(z,1)}{1+\Psi^{(3)}(z,1)}\\
 & =-n-1+\frac{2}{\Cat_{n}^{2}}\big[z^{n}\big]\frac{z\Psi_{1}^{(3)}(z,1)}{1+\Psi^{(3)}(z,1)}\\
 & =-n-1+\frac{2}{\Cat_{n}^{2}}\big[z^{n}\big]z\frac{\partial}{\partial z}\log\big(1+\Psi^{(3)}(z,1)\big),
\end{align*}
and the result follows from the fact that $\big[z^{n}\big]z\frac{\partial}{\partial z}f(z)=n\big[z^{n}\big]f(z)$
for any formal power series $f$. 
\end{proof}
\begin{proof}[Proof of Proposition \ref{prop:upper bound for average distance}]
We need to show that 
\[
\lim_{n\rightarrow\infty}\frac{\widetilde{b}_{n}}{n}=\frac{3\pi-8}{8-2\pi}.
\]
Note that the series 
\[
\Phi(z)=1+\Psi^{(3)}(z,1)
\]
has radius of convergence $\tfrac{1}{16}$ -- this follows, e.g.,
from the asymptotics of Catalan numbers in \eqref{asymCat}. Then,
following the treatment in \cite[Section VI.10.2]{flajolet2009analytic}
of Hadamard products of series as part of closure properties in singularity
analysis, we obtain 
\[
\lim_{n\rightarrow\infty}\frac{1}{\Cat_{n}^{2}}\big[z^{n}\big]\log\Phi(z)=\frac{1}{\Phi\big(\tfrac{1}{16}\big)}.
\]
But from \cite[Page 132]{Lando1992meanders} we have 
\[
\Phi\big(\tfrac{1}{16}\big)=\frac{4(4-\pi)}{\pi},
\]
and the result follows immediately. 
\end{proof}

\section*{Acknowledgements}

We thank Bruce Richmond for extensive help with the asymptotics in
Section \ref{sec:Distance-distributions}, and the anonymous referee
for drawing our attention to the shorter proof of Theorem \ref{thm:main-shallow meanders}
described in Remark \ref{rem:shorter-proof-for-thm-1.1}. A. N. would
also like to thank Dimitri Shlyakhtenko for pointing out how meandric
systems relate to the research literature on planar algebras.

\bibliographystyle{amsalpha}
\bibliography{GNP}

\noindent I.~P.~Goulden, Department of Combinatorics and Optimization,
University of Waterloo, Ontario, Canada. ipgoulden@uwaterloo.ca\\

\noindent Alexandru Nica, Department of Pure Mathematics, University
of Waterloo, Ontario, Canada. anica@uwaterloo.ca\\

\noindent Doron Puder, School of Mathematical Sciences, Tel Aviv University,
Tel Aviv, 6997801, Israel. doronpuder@gmail.com 
\end{document}